\documentclass[12pt]{article}
\usepackage{geometry}                
\usepackage{amsmath,amsthm,amsfonts,amssymb,bm,dsfont,mathtools}
\usepackage{graphicx}
\usepackage{xcolor}
\usepackage{hyperref}
\definecolor{burgundy}{rgb}{0.5, 0.0, 0.13}
\definecolor{linkblue}{RGB}{0,20,128}
\hypersetup{
  linkcolor  = linkblue,
  citecolor  = burgundy,
  urlcolor   = burgundy,
  colorlinks = true,
}

\numberwithin{equation}{section}
\usepackage{epstopdf}
\newcommand{\Es}{{\rm Es}}
\newcommand{\diam}{{\rm diam}}
\newcommand{\dist}{{\rm dist}}

\newcommand{\LE}{{\rm LE}}
\newtheorem{definition}{Definition}[section]
\newtheorem{theorem}[definition]{Theorem}

\newtheorem{lemma}[definition]{Lemma}
\newtheorem{cor}[definition]{Corollary}

\newtheorem{rem}[definition]{Remark}
\newtheorem{prop}[definition]{Proposition}

\newcommand{\KK}{{\cal K}}
\newcommand{\BZ}{{\mathbb{Z}}}
\newcommand{\BR}{{\mathbb{R}}}

\newcommand{\Zp}{{\mathbb{Z}^+}}
\newcommand{\Rp}{{\mathbb{R}^+}}
\newcommand{\Z}{{\mathbb{Z}}}

\newcommand{\R}{{\mathbb{R}}}
\newcommand{\Rt}{{\mathbb{R}^3}}
\newcommand{\PP}{{\mathbb{P}}}
\newcommand{\EE}{{\mathbb{E}}}
\newcommand{\cD}{{\cal D}}
\newcommand{\DD}{\mathbb{D}}
\newcommand{\cont}{{\rm Cont}}
\newcommand{\Dmo}{\mathbb{D}\setminus\{0\}}

\title{Sharp one-point estimates and Minkowski content for the scaling limit of three-dimensional loop-erased random walk}
\author{Saraí Hernández-Torres, Xinyi Li, and  Daisuke Shiraishi}
\date{December 2025}                                           

\begin{document}
\maketitle

\begin{abstract}
 In this work, we consider the scaling limit of loop-erased random walk (LERW) in three dimensions and prove that the limiting occupation measure is equivalent to its $\beta$-dimensional Minkowski content, where $\beta\in(1,5/3]$ is its Hausdorff dimension. In doing this we also establish the existence of the two-point function and provide some sharp estimates on one-point function and ball-hitting probabilities for 3D LERW in any scale, which is a considerable improvement of previous results.
\end{abstract}

\newpage
\tableofcontents

\newpage

\section{Introduction}\label{se:intro}

Understanding random self-repulsive curves is a challenging task in probability theory and statistical physics that has captured the attention of many researchers for more than half a century. In 1949, Flory proposed the model of a self-avoiding walk (SAW) as a toy model of polymers (see~\cite{Flo49}). In 1980, Lawler introduced in~\cite{Law1980} the model of loop-erased random walk (LERW), hoping to provide an alternative definition of SAW but gradually discovered that it is a model of significance in its own right.

The existence and characterization of the scaling limit of discrete random self-repulsive curves in $d=2,3$ are arguably among the most interesting questions in the model. Unlike the case in two dimensions, where Schramm-Loewner evolution (SLE) provides a precise description of the scaling limits of various critical lattice models, very little is known about the scaling limit of 3D critical models. A crucial reason for this is the lack of convenient tools, such as SLE, to describe the scaling limit intrinsically as a random curve.

In the 3D case, LERW was the first model to achieve a considerable breakthrough. In 2007, Kozma confirmed the existence of the scaling limit in \cite{Kozma}. Later, in \cite{Growth, Hausdorff, SS}, Shiraishi and Sapozhnikov-Shiraishi discovered various properties of this scaling limit. More precisely, it was confirmed that this scaling limit is almost surely a self-avoiding curve with a Hausdorff dimension $\beta$, where $\beta\in(1,5/3]$ is a certain constant. 

In considering the scaling limit of discrete random curves, it is natural to rescale the space by the inverse of the lattice mesh, parametrize the discrete curve in a way such that each lattice edge is traversed in the same amount of time and rescale the time in a way such that a smooth bounded domain is traversed in time of order 1. If the limit of the rescaled and parametrized random curve exists, we refer to it as the scaling limit in natural parametrization. Recently, the last two authors proved in \cite{Escape,Natural} that when rescaled properly, three-dimensional LERW converges as a parametrized curve to its scaling limit in natural parametrization and determined in \cite{Holder} the optimal H\"older exponent of the scaling limit.

With these results in mind, it is natural to ask if there is a more direct and canonical way to obtain the natural parametrization of the scaling limit. 
In the 2D case, Lawler and Viklund showed in \cite{LawVik,LawVikRadial} that the scaling limit of planar LERW in natural parametrization is given through the Minkowski content of ${\rm SLE}_2$, whose existence was confirmed in \cite{Law15}. It has also been shown (see \cite{LawShe,Ben}) that the natural parametrization of ${\rm SLE}_k$ (which satisfies a conformal Markov property) is unique up to multiplicative constants. However, conceivably it is difficult to establish such axiomatic characterization of natural parametrization in 3D due to lack of the rich conformal symmetry in 2D. 

In this work we are going to show that the Minkowski content of the scaling limit of 3D LERW exists and is equivalent to the limiting occupation measure obtained by the authors in \cite{Natural} from which the natural parametrization of the scaling limit is induced. In the course of proof, we also obtain sharp asymptotics of one-point function in any scale, and sharp estimates on the ball-hitting probabilities of LERW and its scaling limit, which is a big improvement of earlier results of the authors in \cite{Escape}.

\subsection{Main results} \label{subsec:main}

We now turn to the main results of this paper. Before stating them precisely, we need to introduce some notation first. Let $\mathbb{D}=\{ x \in \mathbb{R}^{3} \ | \ |x| < 1 \}$ be the open unit ball. Pick some $m\in\mathbb{R}^+$, 
and let $\mathbb{D}_m \coloneqq m^{-1}\mathbb{Z}^3\cap \mathbb{D}$ be the discretized unit ball on the rescaled integer lattice $m^{-1}\mathbb{Z}^3$.

Let $S_m$ be the simple random walk on $m^{-1}\mathbb{Z}^3$ started from $0$ and stopped at the first exit of $\mathbb{D}_m$. Define $\eta_m \coloneqq \LE(S_m)$ as its loop-erasure, which we will refer to as the LERW on $\mathbb{D}_m$. (We define the loop-erasure of a path below, in Definition~\ref{def:LE}.) Write $\KK_m$ for the trace of $\eta_m$ as a random compact subset of $\mathbb{R}^{3}$, and let $\KK$ stand for the weak limit of $\KK_{2^n}$ as $n\in\Zp$ tends to infinity, whose existence was confirmed in \cite{Kozma}. Let 
\begin{equation}\label{eq:betadef}
\beta\in \Big(1,5/3\Big]
\end{equation}
be the growth exponent of 3D LERW as well as the Hausdorff dimension of $\KK$ as defined in \cite{Growth} and \cite{Hausdorff}.

Let $ \hat{x} = (1/2, 0, 0)$ be a reference point, and let 
\begin{equation} \label{eq:scalingfactor}
    f_m \coloneqq m^{3} P \left( \hat{x}_m \in \eta_m \right)  
\end{equation}
be the \textbf{reference scaling factor}. Here, $\hat{x}_m$ stands for one of the nearest points to $\hat{x}$ among $m^{-1} \mathbb{Z}^{3}$.
From up-to-constant one-point estimates, we have $f_m \asymp m^{\beta}$ (see Subsection~\ref{section:oneandtwo} below).
Let 
\begin{equation} \label{eq:mudef}
  \mu_{m} \coloneqq (f_m)^{-1} \sum_{ x\in \mathbb{D}_m \cap \eta_m} \delta_{x}
\end{equation}
be the \textbf{occupation measure scaled by the one-point function}.

It was proved in~\cite[Theorem 1.1]{Natural} that as $m\in\Rp$ tends to infinity, 
\begin{equation}\label{eq:firstmu}
(\KK_{m}, \mu_{m})\xrightarrow{\rm w} (\KK, \mu)
\end{equation}
for some random measure $\mu$ supported on $\KK$, which is measurable with respect to $\KK$. 
We call $\mu$ the limiting occupation measure.

Define the following distance $\rho$ between two parametrized curves $\gamma^1:[0,T_1]\rightarrow\Rt$ and  $\gamma^2:[0,T_2]\rightarrow\Rt$ by
\begin{equation}\label{eq:rhodef}
\rho(\gamma^1,\gamma^2)=|T_2-T_1|+\sup_{0\le s\le 1}\left|\gamma^1(sT_1)-\gamma^2(sT_2)\right|.
\end{equation}
From now on, we regard the LERW $ \eta_m$ as a parametrized curve where each edge of $m^{-1}\mathbb{Z}^3$ is traversed in time $(f_m)^{-1}$ at a constant speed. Let $\eta$ be the random continuous curve obtained by parametrizing $\KK$ with the limiting occupation measure $\mu$, as described in~\cite[Subsection 9.1]{Natural}. It was also proved in \cite[Theorem 1.3]{Natural} that $\eta_{m}$ converges to $\eta$ in natural parametrization. In other words, as $m\in\Rp$ tends to infinity, 
\[
  \eta_{m}\xrightarrow{\rm w} \eta
\]
in the topology generated by the $\rho$-metric defined above in~\eqref{eq:rhodef}.

We now turn to Minkowski content. Roughly speaking, the Minkowski content of a set is defined through the limit of the renormalized volume of the enlargement of the set. Although not a measure per se, it is a natural way to define  ``fractal'' measures on  fractal sets. Given a Borel set $G\subset\Rt$ and $\delta\in (0,3]$, we write $\cont_{\delta}(G)$ for its $\delta$-dimensional Minkowski content (provided it exists); see Subsection \ref{sec:MC} for its precise definition. Let $\cD^o$ be the collection of ``nice'' dyadic boxes defined in \eqref{eq:dyadicdef}.

We are now ready to state our first result.
\begin{theorem}\label{mainthm}
There exists a universal constant $c_0>0$ such that for any dyadic box $V \in \cD^o$, 
$\cont_\beta(\KK \cap V)$ exists and equals to $c_0\mu(V)$ almost surely.
\end{theorem}
By a standard measure-theoretic procedure, see Proposition~\ref{prop:generalMC} below, we can generate a random non-atomic Borel measure $\nu$ out of the Minkowski content of $\KK$. 

Hence,  Theorem \ref{mainthm} implies the following corollary.
\begin{cor} \label{cor:MeasuresEqual}
With the same constant $c_0$ as in Theorem \ref{mainthm},
$$
\nu=c_0\mu \quad a.s. 
$$
\end{cor}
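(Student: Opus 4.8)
The plan is to obtain the corollary from Theorem~\ref{mainthm} by a uniqueness-of-measures argument. By construction (Proposition~\ref{prop:generalMC}), $\nu$ is a non-atomic Borel measure built solely from the $\beta$-dimensional Minkowski content of $\KK$, and it satisfies $\nu(V)=\cont_\beta(\KK\cap V)$ for every nice dyadic box $V\in\cD^o$, on the almost-sure event that all these contents exist. Since $\cD^o$ is countable, applying Theorem~\ref{mainthm} to each $V\in\cD^o$ and intersecting the corresponding full-probability events yields a single event $\Omega_0$ with $P(\Omega_0)=1$ on which, simultaneously for all $V\in\cD^o$, the content $\cont_\beta(\KK\cap V)$ exists and equals $c_0\,\mu(V)$. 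Consequently, on $\Omega_0$ we have $\nu(V)=c_0\,\mu(V)$ for every $V\in\cD^o$.

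Next I would upgrade this agreement on $\cD^o$ to the equality of the two measures. The collection $\cD^o$ is a $\pi$-system: the intersection of two dyadic boxes is either empty or equal to the smaller one, the smaller box inherits niceness, and adjoining $\emptyset$ is harmless. Moreover $\cD^o$ generates the Borel $\sigma$-algebra of $\DD$ and contains a sequence $V_n\uparrow\DD$ of nice boxes, since every open subset of $\DD$ is, up to a set of zero mass for both $\mu$ and $\nu$, a countable union of dyadic boxes and only a negligible sub-family of dyadic boxes is excluded from $\cD^o$. As $\mu$ and $\nu$ are finite (being supported on the bounded compact set $\KK$), the Dynkin $\pi$--$\lambda$ uniqueness theorem gives $\nu(A)=c_0\,\mu(A)$ for every Borel $A\subseteq\DD$ on $\Omega_0$; taking $A=V_n$ and letting $n\to\infty$ also yields $\nu(\DD)=c_0\,\mu(\DD)$. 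Since both measures are supported on $\KK$, are non-atomic, and $\KK\cap\partial\DD$ is a.s. a single point, they agree on all Borel subsets of $\Rt$ on $\Omega_0$, whence $\nu=c_0\,\mu$ almost surely.

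I expect the measure-theoretic skeleton above to be entirely routine; the one point that genuinely requires care is checking that $\cD^o$, as defined in~\eqref{eq:dyadicdef}, is simultaneously a $\pi$-system and a generating family. Concretely, one must verify that niceness is preserved when passing to a sub-box under intersection, that the dyadic boxes excluded from $\cD^o$ form a family whose union (or whose defining exceptional positions) is charged by neither $\mu$ nor $\nu$, and that the nice boxes still exhaust $\DD$ and separate its points. Granting these structural facts about $\cD^o$ --- which are in any case needed to make sense of Theorem~\ref{mainthm} --- the corollary follows as above.
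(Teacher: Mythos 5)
Your argument is correct and is essentially the paper's (unwritten) proof: the paper deduces the corollary directly from Theorem~\ref{mainthm} applied simultaneously to the countable family $\cD^o$ together with the measure-generating procedure of Proposition~\ref{prop:generalMC}, i.e.\ agreement of the two finite measures on the generating $\pi$-system $\cD^o$ plus the fact that $\{0\}\cup\partial\DD$ carries no mass. One small repair: no single box in $\cD^o$ has side length exceeding $1/2$, so there is no sequence $V_n\uparrow\DD$ of nice boxes; instead exhaust $\DD\setminus\{0\}$ by finite disjoint unions of boxes from $\cD^o$ to match the total masses before invoking the $\pi$--$\lambda$ theorem.
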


The Borel measure $\nu$ is supported on $\KK$ and defines an intrinsic parametrization. 
Given $x \in \KK$, denote by $\KK_x \subset \KK$ the trace of the curve between $0$ and $x$. 
For each $ t \in [0, \nu(\KK)/c_0] $, there uniquely exists $x_t \in \KK$ such that $t = \nu (\KK_{x_{t}})/c_0$, and then we define
\begin{equation} \label{eq:paramMinkowski}
  \eta^{\nu} (t) \coloneqq x_t, \qquad t \in [0, \nu(\KK)].
\end{equation}
The relevance of Corollary~\ref{cor:MeasuresEqual} is that the parametrization of $\eta$, which was obtained via the scaling limit of the occupation measure $\mu$ (which is a priori not necessarily scaling-invariant), is equal to an intrinsic parametrization of $\KK$, up to a universal constant factor. 
Moreover, since the measure $\nu$ is both scaling-invariant and rotation-invariant, then the {\it parametrized} limit curve $\eta$ is scaling-invariant and rotation-invariant as well (see Corollary~\ref{cor:maincor}).

In the course of the proof of Theorem \ref{mainthm}, we also obtain accurate asymptotics for any scale of the probability that LERW passes a certain point or a certain ball which we  refer to loosely as ``one-point functions''. 
We present these results here, since both of them are of independent interest. 

We start with one-point function estimates.
Given $x\in\Rt$, write $x_m\in m^{-1}\mathbb{Z}^3$ for the closest\footnote{Breaking ties arbitrarily, if necessary.} point on $m^{-1}\mathbb{Z}^3$. We now consider the probability that $\eta_m$ passes through $x_m$.

\begin{theorem}\label{thm:oneptnew}
There exists a function $g:\Dmo\to \mathbb{R}^+$ and universal constants $c,\delta > 0$  such that for all $m\in\Zp$ and $x  \in \mathbb{D} \setminus \{ 0 \}$, 
\begin{equation}\label{eq:oneptnew}
P \Big( x_{m} \in\eta_m \Big)= g(x) m^{\beta-3 } \Big[ 1 + O \Big( d_{x}^{-c} m^{-\delta} \Big) \Big] \qquad (\text{as } m \to \infty),
\end{equation}
where $d_{x} = \min \{ |x| , 1 - |x| \}$. Moreover, 
\begin{equation}
\mbox{ $g(x)$
is continuous in $\Dmo$ and rotation-invariant.}
\end{equation}
\end{theorem}

We now turn to the continuum and look at the probability of intersecting a ball. For $x \in \mathbb{R}^3$ and $r > 0$,
let $ B(x,r) \coloneqq \{ y \in \mathbb{R}^3 \, |  \, \vert x - y \vert \leq r \} $.
Elaborating on Theorem \ref{thm:oneptnew} and various coupling techniques developed so far, we are able to show the following asymptotics of the probability that the scaling limit $\KK$ hits a ball.

\begin{theorem}\label{thm:oneballnew}
There exist $c_1,c, \delta>0$ such that for any $x\in\Dmo$, for $r<\min\{|x|,1-|x|\}/2$,
\begin{equation}
P\Big( \KK \cap B(x,r) \neq \emptyset\Big)=c_1 g(x) r^{3-\beta} \Big[ 1+O\big(d_x^{-c}r^{\delta}\big)\Big],
\end{equation}
where $d_{x} = \min \{ |x| , 1 - |x| \}$.
\end{theorem}

Note that $g(x)$ here corresponds to $c(x)$ in~\cite[Theorem 1.1]{Escape} (which we rephrase in Proposition~\ref{prop:dyadic-onePoint}), where only dyadic scales were considered. Hence, $g(x)$ satisfies the following asymptotics (see~\cite[Equations (1.5), (1.6)]{Escape}):
\begin{equation} \label{eq:one-pointb0}
a_1|x|^{\beta-3} \leq g(x) \leq a_2|x|^{\beta-3} \mbox{ if } 0 <|x| < \frac12
\end{equation}
and
\begin{equation}\label{eq:one-pointb1}
a_1(1 - |x|)^{\beta-1}\leq g(x) \leq a_2(1 - |x|)^{\beta-1}\mbox{ if } \frac12\leq |x|<1
\end{equation}
for some $a_1,a_2\in\mathbb{R}^+$.
Thanks to Theorem~\ref{thm:oneballnew} and the scaling invariance of $\KK$, we can refine these asymptotics in the following corollary.

\begin{cor} \label{cor:asymptotics}
There exist constants  $b_1 > 0 $ and $\delta_1 > 0$ such that
  \begin{equation} \label{eq:asymp0}
    g (x) = b_1 \vert x \vert^{-(3 - \beta)} \Big[ 1 + O (\vert x \vert^{\delta_1}) \Big] \qquad \text{ as } x \rightarrow 0,
  \end{equation}
  and there exist $b_2 > 0 $ and $\delta_2 > 0$ satisfying
  \begin{equation} \label{eq:asymp1}
     g (x) = b_2 (1 - \vert x \vert)^{-(1 - \beta)} \Big[ 1 + O( (1 - \vert x \vert)^{\delta_2}) \Big] \qquad \text{ as } x \rightarrow 1.
  \end{equation}
\end{cor}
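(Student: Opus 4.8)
The plan is to derive the asymptotics of $g$ near the endpoints of its range by comparing the ball-hitting estimate of Theorem~\ref{thm:oneballnew} with the scaling and rotation invariance of the continuum object $\KK$. Fix a small $x$ near $0$ and a radius $r$ comparable to $|x|$, say $r = \theta|x|$ for a fixed small $\theta\in(0,1)$. On one hand, Theorem~\ref{thm:oneballnew} applied at $x$ gives
\begin{equation}\label{eq:cor-asym-pf1}
P\big(\KK\cap B(x,\theta|x|)\neq\emptyset\big)=c_2\,g(x)\,(\theta|x|)^{3-\beta}\Big[1+O\big(d_x^{-c}(\theta|x|)^\delta\big)\Big].
\end{equation}
On the other hand, by the scaling invariance of $\KK$ (which follows from Kozma's construction as a limit along dyadic scales, together with the rotation invariance recorded in Theorem~\ref{thm:oneptnew} and Corollary~\ref{cor:maincor}), the law of $\KK$ is invariant under $y\mapsto \lambda y$ for $\lambda\in(0,1)$, and moreover $\KK$ near $0$ looks, after rescaling, like a fixed ``infinite'' LERW-type object. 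Concretely, writing $\lambda=|x|$ and $x=\lambda e$ with $|e|=1$, rescaling by $\lambda^{-1}$ maps $B(x,\theta|x|)$ to $B(e,\theta)$ and maps $\KK$ to a scaled copy; the event $\{\KK\cap B(x,\theta|x|)\neq\emptyset\}$ has probability equal to $P(\KK\cap B(e,\theta)\neq\emptyset)$ up to an error coming from the fact that the exact scaling invariance holds only for the ``local'' structure near $0$, not globally in $\DD$. Rotation invariance then lets us replace $e$ by the reference direction, so this probability is a function of $\theta$ alone, call it $h(\theta)$, up to a multiplicative error $1+O(|x|^{\delta'})$.

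Equating the two expressions, $c_2 g(x)(\theta|x|)^{3-\beta}[1+O(|x|^{\delta''})] = h(\theta)[1+O(|x|^{\delta'})]$, so
\begin{equation}\label{eq:cor-asym-pf2}
g(x)=\frac{h(\theta)}{c_2\theta^{3-\beta}}\,|x|^{-(3-\beta)}\Big[1+O\big(|x|^{\delta_1}\big)\Big],
\end{equation}
with $\delta_1=\min\{\delta,\delta',\delta''\}$. The prefactor $b_1:=h(\theta)/(c_2\theta^{3-\beta})$ must be independent of the choice of $\theta$ (since the left side does not depend on $\theta$), which is a consistency check; one can also see directly that $h(\theta)/\theta^{3-\beta}$ converges as $\theta\to 0$ using the up-to-constant bounds~\eqref{eq:one-pointb0}. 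This yields~\eqref{eq:asymp0}. The case $x\to1$ is entirely analogous: near a boundary point the relevant self-similar object is a half-space LERW, $d_x=1-|x|$, and the exponent $\beta-3$ is replaced by $\beta-1$ because near the boundary the scaling limit is a curve hitting the sphere and the ``escape'' exponent changes, matching the second line of~\eqref{eq:one-pointb0}; choosing $r=\theta(1-|x|)$ and running the same comparison gives~\eqref{eq:asymp1} with $b_2$ the corresponding limiting constant and $\delta_2$ the corresponding error exponent.

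The main obstacle is making the ``rescaled $\KK$ near $0$ equals a fixed self-similar object'' step precise with quantitative error control: the scaling invariance of $\KK$ in the exact sense holds only for dyadic ratios and in a limiting/local sense, so one must either invoke a quantitative version of the convergence $\KK_{2^n}\to\KK$ together with the one-point estimate of Theorem~\ref{thm:oneptnew} to get the $1+O(|x|^{\delta'})$ comparison, or argue via a coupling of $\KK$ restricted to $B(0,2|x|)$ with the corresponding restriction of a scaled copy, controlling the discrepancy by the boundary-effect error already quantified in Theorem~\ref{thm:oneballnew}. Once this quantitative self-similarity is in hand, the rest is bookkeeping of error exponents and the observation that the $\theta$-dependence cancels. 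A secondary technical point is handling the constraint $r<d_x/2$ in Theorem~\ref{thm:oneballnew}, which forces $\theta<1/2$; this is harmless since any fixed small $\theta$ works, and the $\theta\to0$ limit is taken only at the level of the constant $b_1$.
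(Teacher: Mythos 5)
There is a genuine gap, and it sits exactly where you flagged ``the main obstacle.'' Your argument hinges on the statement that, after rescaling by $|x|^{-1}$, the hitting probability $P(\KK\cap B(x,\theta|x|)\neq\emptyset)$ equals a fixed function $h(\theta)$ up to a factor $1+O(|x|^{\delta'})$. Nothing in the paper's toolbox gives this directly: exact scaling invariance only identifies $|x|^{-1}\KK$ with the scaling limit of LERW in the huge domain $|x|^{-1}\DD$, so what you need is a \emph{quantitative} statement that the law of the LERW scaling limit in $R\DD$, viewed at unit scale near the origin, converges with a polynomial rate in $R^{-1}$ to a fixed limiting object (essentially the infinite-LERW limit). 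That is a substantial coupling lemma, not bookkeeping, and your proposal does not supply it; the two fixes you sketch (quantitative $\KK_{2^n}\to\KK$ plus Theorem~\ref{thm:oneptnew}, or a restriction coupling ``controlled by the boundary-effect error in Theorem~\ref{thm:oneballnew}'') are not arguments, and the second one is circular since Theorem~\ref{thm:oneballnew} quantifies no such domain-comparison error. There is also a concrete quantitative problem with your starting point: with $r=\theta|x|$ and $d_x=|x|$, the error in Theorem~\ref{thm:oneballnew} is $O(d_x^{-c}r^{\delta})=O(\theta^{\delta}|x|^{\delta-c})$, which is not $o(1)$ as $x\to0$ unless $\delta>c$, and no such relation between the exponents is available. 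Taking $r$ polynomially smaller than $|x|$ repairs this error but destroys the ``fixed $\theta$'' picture, pushing you back onto the missing local-limit lemma (now needed uniformly as the rescaled radius tends to $0$). A minor additional slip: the up-to-constant bounds \eqref{eq:one-pointb0} give only boundedness of $h(\theta)/\theta^{3-\beta}$, not convergence as $\theta\to0$.

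For comparison, the paper avoids any ``local limit near $0$'' statement by a ratio trick: for fixed small $x$ it applies Theorem~\ref{thm:oneballnew} to write $g(2x)/g(x)$ as a ratio of ball-hitting probabilities for $\KK$, uses the exact lattice identity $\eta^2_m=\eta_{m/2}$ (LERW in $2\DD$ at mesh $m^{-1}$ versus LERW in $\DD$ at mesh $(m/2)^{-1}$) together with the exact scaling invariance $\KK^2\overset{\rm d}{=}2\KK$ to identify this with the corresponding ratio at the doubled points, and then invokes Theorem~\ref{thm:oneptnew} to conclude $g(2x)/g(x)=g(4x)/g(2x)$, and more generally $g(2x)/g(x)=g(2sx)/g(sx)$; here the errors vanish because $r$ and $m$ can be sent to $0$ and $\infty$ with $x$ fixed, so the constraint $r\ll d_x$ costs nothing. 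Combined with continuity, rotation invariance and \eqref{eq:one-pointb0}, this functional equation yields \eqref{eq:asymp0}, and the boundary case is handled analogously. If you want to salvage your route, you would have to prove the quantitative domain-comparison coupling yourself; otherwise the cleaner path is the paper's two-scale ratio argument.
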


For the two-point function, we obtain the following estimates. 

  \begin{theorem} \label{thm:two-point}
  Let $ \Omega = \{ (z,w) \in \mathbb{D} \setminus \{ 0  \} \; \vert \; z \neq w \} $.
  There exists a function $g : \Omega \rightarrow \mathbb{R}^+ $ and universal constants $c, \delta > 0$ such that for all $m \in \mathbb{Z}^+$ and $(z, w) \in \Omega$
  \begin{equation} \label{eq:twopt-green}
    P \left( z_m, w_m \in \eta_m \right) = g(z,w) m^{2(\beta - 3)} \Big[  1 + o(1) \Big], \qquad \text{ as } m \to \infty.
  \end{equation}
  Moreover, for any $(z, w) \in \Omega$ and $r < d_{z,w} / 2$
  \begin{equation} \label{eq:twop-rate}
    P \left( \KK \cap B (z,r) \neq \emptyset, \KK \cap B (w,r) \neq \emptyset \right)
    = c_1^2 g (z,w) r^{2(3 - \beta)} \Big[  1 + O (d_{z,w}^{-c} r^{\delta}) \Big],
  \end{equation}
  where $d_{z,w} = \min \{  d_z, d_w, \vert z - w \vert \}$.
\end{theorem}

Theorem \ref{mainthm} and Theorem~\ref{thm:oneptnew} enhance~\cite[Theorem 1.3]{Natural}. Specifically, the refined one-point estimates allow us to substitute the reference scaling factor~\eqref{eq:scalingfactor} with an explicit scaling. 
We define
\begin{equation} \label{eq:explicitScaling}
  \overline{\mu}_{m} \coloneqq m^{-\beta} \sum_{x\in \mathbb{D}_m \cap \eta_m} \delta_{x}
\end{equation}
and introduce $\overline{\eta}_m$ as the LERW on $\mathbb{D}_m$ parametrized by $\overline{\mu}_m$ and $\overline{\eta}$ as $\KK$ parametrized by $(c_0g(\widehat x))^{-1}$-times its Minkowski content.
We now present the convergence of 3D LERW in its natural parametrization in full strength. 
\begin{theorem}\label{finalthm}
  As $m\to\infty$, $\overline{\eta}_m \xrightarrow{\rm w} \overline{\eta}$ in the topology generated by the $\rho$-metric.
\end{theorem}

The sharp one-point estimates also give us convergence of the infinite loop-erased random walk (ILERW) in its natural parametrization.

Let $ \mathcal{C} $ be the space of parametrized curves $\lambda : [0, \infty) \to \mathbb{R}^3 $ satisfying $\lambda (0) = 0$ and  $\lim_{t \to \infty} \diam (\lambda [0,t]) = \infty $.
Consider the metric on $\mathcal{C}$ given by
\[
  \chi (\lambda_1, \lambda_2) \coloneqq \sum_{k = 1}^{\infty} 2^{-k} \max_{0 \leq t  \leq k} \, \min \left\lbrace | \lambda_1 (t) - \lambda_2 (t)|, 1 \right\rbrace,  
\]
for $\lambda_1, \lambda_2 \in \mathcal{C}$.
The space $(\mathcal{C}, \chi)$ is a complete and separable metric space (see Subsection 2.4 of~\cite{KS}). 

Let $S_{m}$ be a simple random walk on $ m^{-1} \mathbb{Z}^3 $ started from $0$. The path
\[ 
  \eta^{\infty}_m \coloneqq \LE (S_m [0, \infty)  )
\] 
defines the \emph{infinite loop-erased random walk} (ILERW) on $m^{-1}  \mathbb{Z}^3$. Since the simple random walk on $\mathbb{Z}^3$ is transient almost surely, then $\eta^{\infty}_m$ is well-defined.
We also write $\eta^{\infty}_m$ for the curve obtained by linearly interpolating the path $\eta^{\infty}_m$, and consider a parametrization $\overline{\eta}^{\infty}_m$ defined by
\[
  \overline{\eta}^{\infty}_m (t) \coloneqq \eta^{\infty}_m ( m^{\beta} t ), \qquad t \geq 0.
\]
The continuous curve $\overline{\eta}^{\infty}_m$ defines a random element of $\mathcal{C}$.

\begin{theorem} \label{thm:ILERW}
  There exists $\eta^{\infty} \in \mathcal{C}$ such that, 
  as $m \to \infty$,  $\overline{\eta}^{\infty}_m \xrightarrow{\rm w} \eta^{\infty}$ with respect to the topology generated by the metric $\chi$.
\end{theorem}

\begin{rem}
1) Using the same techniques employed in proving Theorem~\ref{thm:oneptnew}, one can show the existence of the one-point function associated with $\eta^{\infty}$, the scaling limit of the ILERW. Specifically, letting  $g_{\infty} :=b_1 |x|^{-(3-\beta)}$ (recall the definition 
of $b_1$ from Corollary \ref{cor:asymptotics}), using an asymptotic relation between LERW and ILERW (see e.g.\ Lemma 3.1 of \cite{Natural}), there exist $c,\delta>0$ such that for any $x\in\mathbb{D}\setminus\{0\}$,
\begin{equation}
P\Big(x_m\in \eta_m^\infty\Big)=g_\infty(x) m^{\beta-3 } \Big[ 1 + O \Big( |x|^{-c} m^{-\delta} \Big) \Big] \qquad (\text{as } m \to \infty).
\end{equation}
Moreover, for all $r\in(0,|x|/2)$,
\[
  P \left( \eta^{\infty} \cap B (x, r) \neq \emptyset \right) = c_1 g_{\infty}(x) r^{3 - \beta} \Big[  1 + o_r(1) \Big],
\]
where $c_1$ is defined in Theorem \ref{thm:oneballnew}. In a similar fashion, we can also define the two-point functions $g_\infty(\cdot,\cdot)$. We omit the details.


\smallskip

\noindent 2) Although open at the moment, we conjecture that $g(\cdot)$ is differentiable and $|g'(\cdot)|$ satisfies certain asymptotics as $x\to0$ or $x\to1$.

\smallskip

\noindent 3) Under certain regimes, by performing an analysis similar to those in this work, it is possible to relate the asymptotics of $g(\cdot,\cdot)$ and $g_\infty(\cdot,\cdot)$ to those of $g(\cdot)$ or $g_\infty(\cdot)$. For example, as $x\to0$ or $x\to1$, for any $y\in\mathbb{D}\setminus\{0\}$, one has
\begin{equation}
    \frac{g(x,y)}{g(x)}\to g(y).
\end{equation}   
This is reminiscent of the operator product expansion (OPE) in 2D conformal field theory.
\end{rem}

\begin{rem}1)
  The existence of a scaling limit for the uniform spanning tree on $\Z^3$ was established in~\cite{3dUST}. 
  The paper relies on the scaling limit of the loop-erased random walk in~\cite{Natural} and the one-point function estimates for the loop-erased random walk in~\cite{Escape}. 
  Building upon the results from these papers,~\cite{3dUST} shows the existence of sub-sequential scaling limits, convergence over a dyadic scaling, and properties of the limit tree. 

  As noted in~\cite[Remark 1.2]{3dUST}, the restrictions of~\cite{Natural} and~\cite{Escape} to a dyadic scaling impose the same condition over the results in~\cite{3dUST}. 
  With the introduction of Theorems~\ref{thm:oneptnew},~\ref{finalthm}, and~\ref{thm:ILERW}, the convergence of the scaling limit of the UST on $\mathbb{Z}^3$ now holds for arbitrary scaling. This convergence is established with respect to the Gromov-Hausdorff-Prohorov topology, incorporating a locally uniform topology for the embedding. For a detailed presentation of this topology, see~\cite{3dUST}.
\end{rem}

\subsection{Proof overview}

We now briefly comment on the proof. To prove the existence of Minkowski content, we follow the relatively standard route (see e.g.~\cite{Law15},~\cite{LawRaz} or~\cite{HLLS}) in which the crux is a strong two-point estimate (see Proposition \ref{prop:crux2pt} for the exact statement).  
 As we do not yet possess sufficient knowledge of the scaling limit, it is relatively hard to obtain such estimates from the continuum side directly. However, a continuity result on hitting probability for balls (see Proposition \ref{prop:continuity}) allows us to work in the discrete instead. We then apply the asymptotic independence of LERW in the two-point case (see Proposition \ref{prop:decoup} for precise statements). The asymptotic independence was originally obtained in \cite{Natural} as the core property to ensure the convergence in natural parametrization. Intuitively speaking, this property allows us to ``decouple'' global behavior and local behavior, and reduce two-point estimates to a strong one-point estimate, which we will comment shortly. Similar arguments lead to the identification (up to a multiplicative constant) of the measure induced by Minkowski content and the limiting occupation measure.

We now comment on the one-point estimates.  Recall that in~\cite{Escape}, for obtaining the strong one-point function asymptotics, the authors compared  the non-intersection probability of independent LERW and SRW started at the origin, with the non-intersection probability of independent LERW and SRW with separated starting points; the gap between these starting points allowed the comparison of the LERW with its scaling limit and the non-intersection probability with respect to an independent Brownian motion. 
To extend the one-point  estimate (i.e., Theorem \ref{thm:oneptnew}) beyond dyadic scales, 
we examine scales of the form $m = 2^{n + r}$ with $n \in \mathbb{N}_+$ and $r \in [0,1)$.
Given a fixed $x \in \mathbb{D} \setminus \{ 0 \} $, we consider a function $f : [0,1] \rightarrow \mathbb{R}^+$ satisfying $ P ( x_{2^m} \in \eta_{2^{m}} ) \simeq f(r) 2^{-(3 - \beta)n}  $. In fact, the function $f$ satisfies a certain functional equation (stated in Proposition~\ref{prop:funceq}) which implies that $f (r) = c 2^{-(3 - \beta)r}$ for some $c > 0$. Therefore $P ( x_{2^m} \in \eta_{2^{m}} ) \simeq c 2^{-(3 - \beta)m} $ for any $m \in \mathbb{R}^+$ and yields Theorem \ref{thm:oneptnew}. The key part property of the functional equation is a local logarithmic additivity property, which is proved in Proposition~\ref{prop:funceqss} with the techniques in~\cite{Escape} mentioned above.
 
\subsection{Organization of this work}

This work is organized as follows. In Section \ref{se:notations} we will introduce various notation and basic properties of LERW that we will be using repeatedly.  
In  Section~\ref{se:continuity} we prove  the continuity property of ball-hitting probability of Proposition  \ref{prop:continuity}.
This proposition will allow us to pass results from the discrete to the continuum. 
In Section~\ref{se:onept} we begin with our analysis on one-point function estimates. We then give the proof of Theorems~\ref{thm:oneptnew} and~\ref{thm:oneballnew}. These two theorems are the technical basis for our main results. By the end of Section~\ref{se:onept}, we prove the asymptotics for the one-point function of Corollary~\ref{cor:asymptotics}. Moving on to Section~\ref{se:twopoint}, we establish an $L^2$ estimate in Proposition~\ref{prop:3.0}, which follows from the crucial two-point estimate in Proposition~\ref{prop:crux2pt}. Then we prove Theorem~\ref{thm:two-point}.  
Finally, in Section \ref{se:proof} we prove the main result, Theorem \ref{mainthm}.


We end this section by mentioning our convention for constants in this work. Unless otherwise specified, in this work we always consider $d=3$. We use $c,C,c',\ldots$ to denote positive constants which may change from line
to line and use $c$ with subscripts, i.e., $c_1,c_2,\ldots$ to denote constants that stay fixed. If a constant is dependent upon some other quantity, this dependence will be made explicit. For example, if $C$ depends on $\delta$, we write $C(\delta)$. We write $f_{n} \simeq g_{n}$ to indicate that $f_n = g_n [1 + O (2^{-n\delta})]$ for  $\delta > 0$ as $n \to \infty$.

\section{Notation and some useful facts}\label{se:notations}
In this section, we will fix some notation and conventions that we will stick to throughout this paper. We will also recall some useful facts regarding loop-erased random walk and its scaling limits, in particular in three dimensions.

In particular, in Subsection \ref{se:scalinglimit}, we will discuss the existence of the scaling limit of the trace of 3D LERW, as originally proved in \cite{Kozma}. We will also state a version of this convergence, which is crucial to this work and whose proof was already present within \cite{Kozma}.
\subsection{General notation}
Let $\BZ^3$ stand for the three-dimensional lattice and for any $m>0$, let $m^{-1}\BZ^3$ stand for the rescaled lattice of mesh size $m^{-1}$. For any open $D\subseteq \BR^3$, we write $D_m = D\cap m^{-1}\BZ^3$ for the discretization of $D$ on $m^{-1}\BZ^3$. 

We call $\lambda = [\lambda(0),\lambda(1),\ldots ,\lambda(n)]$, a sequence of points in $m^{-1}\BZ^3$ for some $m>0$ a path, if $|\lambda(j+1) - \lambda(j)| = m^{-1}$ for all $j= 0,\ldots,n-1$. Let ${\rm len}(\lambda) = n$ denote the (graph) length of $\lambda$. We call $\lambda$ a \emph{self-avoiding path (SAP)} if $\lambda(i)\neq \lambda(j)$ for all $i\neq j$.

\begin{definition}\label{def:LE}
Given a path $\lambda =[\lambda(0),\lambda(1),\ldots,\lambda(m)]$, we define its chronological loop-erasure $\LE(\lambda)$ as
follows. Let
$$
s_0 \coloneqq \max\{t \geq 0 \, | \, \lambda(t) = \lambda(0)\},
$$
and for $i\geq1$, let
$$
s_i \coloneqq \max\{t \geq 0 \, | \, \lambda(t) = \lambda(s_{i-1}+1)\}.
$$
We write $n = \min\{i \, |  \, s_i = m\}$. Then,
$$\LE(\lambda) \coloneqq [\lambda(s_0), \lambda(s_1),\ldots , \lambda(s_n)].$$
\end{definition}
It is easy to see that $\LE(\lambda)$ is an SAP.

We call an infinite sequence of points $\lambda=[\lambda(0),\lambda(1),\ldots]$ in $m^{-1}\BZ^3$ an \emph{infinite path}, if $|\lambda(j+1) - \lambda(j)| = m^{-1}$ for all $j= 0,1,\ldots$. We call $\lambda$ \emph{transient} if for any $a\in m^{-1}\BZ^3$, $|\{k: \lambda(k)=a\}|<\infty$. For a transient infinite path, we can define its loop erasure, denoted as $\LE(\lambda)$, in a manner similar to Definition~\ref{def:LE}.

We write $|\cdot|$ for the Euclidean distance in $\Rt$. Given $B_1,\ldots,B_n$ a sequence of Borel subsets of $\Rt$, let
\begin{equation}\label{eq:distdef}
\dist(B_1,\ldots,B_n) \coloneqq \inf\{ |x-y|: x\in B_i, y\in B_j, i\neq j\}.
\end{equation}
If $B_1=\{b\}$ consists of only one point, with little abuse of notation we will write $\dist(b,B_2)$ for short.

Given a Borel set $G\subset\Rt$, we write ${\rm Vol}(G)$ for its volume (Lebesgue measure in $\Rt$) and write
\begin{equation}
B(G,r)\coloneqq \{x\in\Rt \, |  \, \dist(x,G)\leq r\}
\end{equation}
for the $r$-neighborhood of $G$ for some $r>0$.

\subsection{Metric spaces}\label{metric}
For a closed $D\subset\Rt$, 
we let $\big( {\cal H} (D ), d_{\text{Haus}} \big)$ be the space of all non-empty compact subsets of $D$ endowed with the \emph{Hausdorff metric} 
\begin{equation}\label{Haus}
d_{\text{Haus}} (A, B) \coloneqq \max \Big\{ \sup_{a \in A} \inf_{b \in B} |a-b|,  \  \sup_{b \in B} \inf_{a \in A} |a-b| \Big\} \  \text{ for } A, B \in {\cal H} (D).
\end{equation}
It is well known that $\big( {\cal H} (D), d_{\text{Haus}} \big)$ is a compact, complete metric space (see \cite{Henr} for example).

We set $\big( {\cal C} (D ), \rho \big)$ for the space of continuous curves $\lambda : [0, t_{\lambda}] \to D $ with the time duration $t_{\lambda} \ge 0$, where the metric $\rho$ is defined by 
\begin{equation}\label{rho-metric}
\rho (\lambda_{1}, \lambda_{2} ) \coloneqq |t_{1} - t_{2} | + \sup_{0 \le s \le 1} \big| \lambda_{1} ( s t_{1} ) - \lambda_{2} ( s t_{2} ) \big| \ \text{ for } \lambda_{1}, \lambda_{2} \in {\cal C} (D ).
\end{equation}
It is well-known that $\big( {\cal C} (D ), \rho \big)$ is a separable metric space (see Subsection 2.4 of \cite{KS}).

\subsection{Minkowski content}\label{sec:MC}
In this subsection, we give a brief introduction to the notion of Minkowski content and discuss how to induce a measure out of it. For generality, we will write down the results in any dimension $d$.

Let $A\subset \R^d$ be a bounded Borel set. For some $\delta\in(0,d]$ and $r>0$, we write
\[ 
  \cont_{\delta,r}(A)\coloneqq r^{-d+\delta} {\rm Vol}\big(B(A,r)\big)
\]
for the $\delta$-dimensional $r$-content of $A$, and write
\[ 
  \cont_\delta(A)\coloneqq\lim_{r\to 0}  \cont_{\delta,r}(A)
\]
for its $\delta$-dimensional Minkowski content, {\it provided the limit exists}. Note that for each $A$, there exists at most one $\delta$ such that its $\delta$-dimensional content is finite and positive. 
The definition of the Minkowski content depends on the volume. Then, when the corresponding limit exists, the Minkowski content enjoys a scaling property and is invariant under the isometries of $\R^d$.

Suppose $A$ is a random compact subset of $\BR^d$ governed by some probability measure $\mu$ with Hausdorff dimension $\delta\in (0,d)$. In order to construct a fractal measure supported on $A$, we need to show that for a wide class of sets $V$, $\cont_\delta(A)$ exists. 

To this end, we employ a classical method, which involves proving the existence of one-point and two-point Green's functions with some convergence rate. The respective conditions and claims are summarized in the following proposition (quoted from~\cite[Proposition 4.8]{HLLS}, with some adaptations, see Remark \ref{rem:modiMC}).

We write $\mu [ X ]$ for the integral of $X$ with respect to $\mu$.

\begin{prop}\label{prop:generalMC}
Let $A$ be  a random compact subset of $\BR^d$ governed by some probability measure $\mu$.   
Fix $0 < \delta < d $ and set $ \eta = d - \delta $. 
Suppose $U,V$ are  bounded Borel subsets of $\R^d$
with $V \subset {\rm int}(U)$ and
 such that for some $\varepsilon > 0$,
\begin{equation}\label{eq:boundarycond}
\cont_{d- \varepsilon}(\partial V)  = 0 .
\end{equation} 
Let 
$$
J_s(z) =   2^{\eta s} 1_{\{\dist(z,A)  \leq 2^{-s} \}}
\mbox{ and }
  J_{s,V} = \int_V J_s(z) \, dz.$$
Suppose   the following holds for $z \neq w \in U$.
\begin{itemize}
\item The limits
\begin{equation}  \label{eq:mereexistence}
 G(z) = \lim_{s \rightarrow \infty}\mu\left[J_s(z) 
\right] \mbox{ and }  G(z,w) = \lim_{s \rightarrow \infty}
    \mu\left[J_s(z) \, J_s(w)\right] 
\end{equation}
exist and are finite.  Moreover, 
$G$ is uniformly bounded on $U$. 
\item  There exist  $c,b, \rho_0,u >0$ such that
 for $ 0 \leq \rho \leq \rho_0$,
\begin{equation}
 \label{eq:speed}
 \begin{split}
 &G(z) \leq c; \;\; G(z,w) \leq  c \, |z-w|^{-\eta};
   \;\;   \left| \mu[J_s(z)] - G(z)\right| \leq c \, e^{-us^b};\\
 &\left|\mu[J_s(z) \, J_{s+\rho}(w)] -
 G(z,w)\right| \leq  c \, |z-w|^{-\eta} \, e^{-us^b}.
 \end{split}
  \end{equation}
 \end{itemize}
 Then the finite limit
$ J_V =  \lim_{s \rightarrow \infty} J_{s,V} $
exists both a.s.\ and in $L^2$. In the meanwhile,\begin{equation}\label{eq:boundaryV}
\cont_{\delta}(\partial V\cap A)=0\mbox{ a.s.}
\end{equation}
 Moreover, one has
\begin{equation}\label{eq:JV}
J_V = \cont_\delta(V \cap A), \mbox{ a.s.},
\end{equation}
\begin{equation}    \mu\left[J_V \right] =  \lim_{s \rightarrow \infty}
   \mu\left[J_{s,V} \right] = \int_V G(z) \, dz, \mbox{ and}
\end{equation}
\begin{equation}    \mu\left[J_V^2 \right] =  \lim_{s \rightarrow \infty}
   \mu\left[J_{s,V}^2 \right] = \int_V  \int_V G(z,w) \, dz \, dw.
\end{equation}
  \end{prop}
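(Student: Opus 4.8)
The statement to prove is Proposition~\ref{prop:generalMC}, a fairly standard (but technical) result extracting a Minkowski-content measure from one- and two-point Green's function estimates. Here is how I would organize the argument.

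\medskip

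\textbf{Plan of proof.} The strategy is the classical second-moment / $L^2$-convergence scheme (as in~\cite{Law15,LawRaz,HLLS}). First I would reduce the problem to showing that $J_{s,V}$ is Cauchy in $L^2(\mu)$. By hypothesis~\eqref{eq:mereexistence} and~\eqref{eq:speed} we control $\mu[J_{s,V}] = \int_V \mu[J_s(z)]\,dz \to \int_V G(z)\,dz$ with rate $O(e^{-us^b})$, using Fubini and the uniform bound on $G$ together with the pointwise rate $|\mu[J_s(z)] - G(z)| \le c\,e^{-us^b}$. For the second moment, write
\[
  \mu[J_{s,V} J_{t,V}] = \int_V \int_V \mu[J_s(z)\,J_t(w)]\,dz\,dw,
\]
split the region of integration into a diagonal part $\{|z-w| \le \varepsilon_0\}$ and an off-diagonal part $\{|z-w| > \varepsilon_0\}$, and on the off-diagonal part apply the two-point rate in~\eqref{eq:speed} (taking $t = s+\rho$ with $\rho \le \rho_0$, and using the symmetry/monotonicity structure of $J_s$ to bootstrap from bounded $\rho$ to all $t \ge s$) to get convergence to $\int\int G(z,w)$ with an integrable-in $|z-w|^{-\eta}$ error. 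On the diagonal part I would use only the a priori bound $G(z,w) \le c|z-w|^{-\eta}$ together with the crude estimate $\mu[J_s(z)J_t(w)] \le \mu[J_s(z)] \wedge \mu[J_t(w)]$ or an analogous two-point upper bound, and exploit that $\eta = d-\delta < d$ so that $|z-w|^{-\eta}$ is locally integrable in $\R^d \times \R^d$; letting $\varepsilon_0 \to 0$ after $s \to \infty$ kills this contribution. This yields $\mu[J_{s,V}^2] \to \int_V\int_V G(z,w)\,dz\,dw$ and, combined with the first-moment convergence and a standard variance computation $\mu[(J_{s,V} - J_{t,V})^2] = \mu[J_{s,V}^2] - 2\mu[J_{s,V}J_{t,V}] + \mu[J_{t,V}^2] \to 0$, gives the $L^2$ (hence a.s.\ along a subsequence, and then by monotonicity-type control, fully a.s.) convergence $J_{s,V} \to J_V$.

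\medskip

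\textbf{Identifying the limit with the content.} Next I would identify $J_V$ with $\cont_\delta(V \cap A)$. Observe that $J_{s,V} = e^{\eta s}\,{\rm Vol}\big(\{z \in V : \dist(z,A) \le e^{-s}\}\big)$, which differs from $\cont_{\delta,e^{-s}}(V \cap A) = e^{\eta s}\,{\rm Vol}(B(V\cap A, e^{-s}))$ only by boundary effects: points $z \in V$ within $e^{-s}$ of $A$ but whose nearest point of $A$ lies outside $V$, versus points outside $V$ within $e^{-s}$ of $A \cap V$. Both discrepancies are contained in $B(\partial V, e^{-s})$, so their rescaled volumes are bounded by $e^{\eta s}\,{\rm Vol}(B(\partial V, e^{-s}) \cap B(A, e^{-s}))$; I would show this tends to $0$ a.s.\ using the boundary condition~\eqref{eq:boundarycond}, $\cont_{d-\varepsilon}(\partial V) = 0$, after first establishing~\eqref{eq:boundaryV}, $\cont_\delta(\partial V \cap A) = 0$ a.s. That last statement follows from the same $L^2$ machinery applied with $V$ replaced by a shrinking neighborhood of $\partial V$: the one-point integral $\int_{B(\partial V, r)} G(z)\,dz$ is small because $G$ is bounded and ${\rm Vol}(B(\partial V, r)) \to 0$ (indeed controlled via~\eqref{eq:boundarycond}), while Fatou/monotone convergence transfers this to the limit. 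The moment identities for $\mu[J_V]$ and $\mu[J_V^2]$ then follow immediately from the $L^2$ convergence, uniform integrability (which the $L^2$ bound supplies), and Fubini.

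\medskip

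\textbf{Main obstacle.} The genuinely delicate point is the diagonal contribution to the second moment: one must show that near $\{z = w\}$ the two-point quantity $\mu[J_s(z) J_{s+\rho}(w)]$ does not blow up worse than the locally integrable rate $|z-w|^{-\eta}$ \emph{uniformly in $s$}, so that a single dominated-convergence argument closes the estimate. Proving a uniform-in-$s$ two-point upper bound of the form $\mu[J_s(z)J_t(w)] \le c\,|z-w|^{-\eta}$ — which here is part of the hypothesis~\eqref{eq:speed}, so in this proof it can be invoked directly — is precisely where the actual work lies when one later verifies the hypotheses for $A = \KK$ (that is the role of Proposition~\ref{prop:crux2pt}). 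A secondary technical nuisance is passing from the two-point rate at shift $\rho \le \rho_0$ to arbitrary pairs of scales $s, t$; this is handled by the standard trick of writing $J_t(w)$ in terms of $J_{s+\rho}(w)$ for an appropriate $\rho$ and absorbing the finitely-many-steps discrepancy into the error, or equivalently by working along a geometric sequence of scales and interpolating. I do not expect either of these to require new ideas beyond bookkeeping, given the hypotheses as stated. A last routine point is upgrading a.s.\ convergence along the integer (or dyadic) subsequence of scales to convergence as $r \to 0$ continuously, which uses the near-monotonicity of $r \mapsto \cont_{\delta,r}$ up to the boundary terms already controlled above.
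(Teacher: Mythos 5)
Your proposal is essentially correct, but note that the paper does not prove this proposition at all: it is quoted from~\cite[Proposition 4.8]{HLLS}, with Remark~\ref{rem:modiMC} only observing that replacing the rate $e^{-us}$ by $e^{-us^b}$ leaves the relevant error terms summable, and your second-moment scheme (first/second moment convergence via the rates in~\eqref{eq:speed}, diagonal/off-diagonal splitting, summability along a sequence of scales with gaps at most $\rho_0$ plus interpolation to continuous scales, and boundary effects killed by~\eqref{eq:boundarycond}) is precisely the argument of that source. The only slip is the crude bound $\mu[J_s(z)J_t(w)]\le \mu[J_s(z)]\wedge\mu[J_t(w)]$, which is false since the $J$'s carry the factor $e^{\eta s}$, but it is harmless because, as you note, the uniform near-diagonal bound $\mu[J_s(z)J_{s+\rho}(w)]\le C|z-w|^{-\eta}$ is already supplied by~\eqref{eq:speed}.
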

  
\begin{rem}\label{rem:modiMC}
Compared to~\cite[Proposition 4.8]{HLLS}, in the proposition above, we have weakened the conditions on the convergence rates of one-point and two-point functions in \eqref{eq:speed}. More precisely, the version in \cite{HLLS} is equivalent to taking $b=1$ here. Note that this modification does not change the argument. More precisely, the RHS of (4.41) and (4.42), ibid., stay summable even when $e^{-us}$ is replaced by $e^{-us^b}$ for some $b>0$, which implies the RHS of (4.43) and (4.44), ibid., still converges to $0$.  We have made this modification to suit the convergence rate obtained in \cite{Natural}, as stated in Proposition \ref{prop:decoup}, in particular the error terms in \eqref{eq:ppbb} and \eqref{eq:bpbb}.
\end{rem}

We now discuss how to induce a (random) Borel measure supported on $A$ via its Minkowski content.  We call $V\subset\R^d$ a dyadic box of scale $n$, if 
\begin{equation}\label{eq:dyadicdef}
V=\left(\frac{k_1}{2^n},\frac{k_1+1}{2^n}\right]\times\cdots\times\left(\frac{k_d}{2^n},\frac{k_d+1}{2^n}\right], \qquad k_1,\ldots,k_d\in\Z
\end{equation}
 for some $n\in\Zp$. 
We write $\cD_n$ for the collection of dyadic boxes $V$ of scale $n$ and write \begin{equation}\label{eq:cddef}
\cD=\cup_{n\in \Z^+} \cD_n
\end{equation}
for the collection of all dyadic boxes.

With the application to the model we consider in this work (namely the scaling limit of 3D LERW in a ball) in mind, we now restrict our attention to a specific sub-collection of dyadic boxes. The same argument works for other boundary conditions or models with little modification.
 We write $\cD_n^o$ for all $V\in \cD_n$ such that $V\subset\Dmo$ and 
\begin{equation}
\dist(0,\partial \DD, V) \geq 2^{-n}
\end{equation}
(see \eqref{eq:distdef} for the definition of $\dist(\cdot)$) and write 
\begin{equation}\label{eq:dnodef}
\cD^o=\cup_{n\in\Z^+} \cD_n^o.
\end{equation} By the same argument as in~\cite[Appendix A]{HLLS}, one arrives at the following proposition.
\begin{prop}
If for some random closed subset $A$ of $\mathbb{D}$ and some $\delta\in(0,d)$, the conditions in Proposition \ref{prop:generalMC} simultaneously hold for all $V\in \cD^o$, then almost surely, the $\nu$ defined as follows:
\[
\nu(V) \coloneqq \cont_{\delta}(A\cap V) \mbox{ \  for all \ }V\in  \cD^o
\]
can be extended to a random non-atomic Borel measure supported on $A$.
\end{prop}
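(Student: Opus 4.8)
\section*{Proof proposal for the final Proposition}

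The plan is to exhibit $\nu_0(V):=\cont_\delta(A\cap V)$ as a nonnegative, finitely additive set function on $\cD^o$, upgrade it to a countably additive premeasure on the ring generated by $\cD^o$, apply the Carath\'eodory--Hahn extension theorem, and finally use the bounds in \eqref{eq:speed} to check that the resulting Borel measure $\nu$ is non-atomic and carried by $A$. This is the argument of~\cite[Appendix A]{HLLS} (see also~\cite{LawRaz}), adapted to the sub-collection $\cD^o$; I only indicate the points that require attention. Since $\cD^o$ is countable, Proposition~\ref{prop:generalMC} applied to each $V\in\cD^o$ shows that the event $\mathcal E$ on which, \emph{simultaneously for all} $V\in\cD^o$, the limit $J_V=\lim_{s\to\infty}J_{s,V}$ exists and is finite, $J_V=\cont_\delta(A\cap V)$ by \eqref{eq:JV}, and $\cont_\delta(\partial V\cap A)=0$ by \eqref{eq:boundaryV}, has probability one. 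I work on $\mathcal E$ and set $\nu_0(V):=J_V\ge 0$ for $V\in\cD^o$.

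The structural input that makes finite additivity free is that, for each fixed $s$, the map $V\mapsto J_{s,V}=\int_V J_s(z)\,dz$ is additive in $V$, being an integral. Since the dyadic descendants of any $V\in\cD^o$ again lie in $\cD^o$ (the constraints defining $\cD_n^o$ pass to smaller scales because $V'\subseteq V$ gives $V'\subset\Dmo$ and $\dist(0,\partial\DD,V')\ge\dist(0,\partial\DD,V)\ge 2^{-n}\ge 2^{-(n+k)}$), subdividing $V\in\cD_n^o$ into $\{V'\in\cD_{n+k}^o:V'\subseteq V\}$ gives $J_{s,V}=\sum_{V'}J_{s,V'}$ for every $s$, and letting $s\to\infty$ yields $\nu_0(V)=\sum_{V'}\nu_0(V')$. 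Hence $\nu_0$ extends unambiguously to a finitely additive, nonnegative, and (since $\nu_0\ge 0$ and $R'\setminus R\in\mathcal R$ whenever $R\subseteq R'$) monotone set function on the ring $\mathcal R$ of finite disjoint unions of boxes from $\cD^o$. Moreover $\mu[\nu_0(V)]=\int_V G(z)\,dz\le c\,{\rm Vol}(V)$, so $\nu_0$ is a.s.\ finite on every box and on $\bigcup_{V\in\cD^o}V=\Dmo$.

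To invoke the extension theorem one must check countable additivity of $\nu_0$ on $\mathcal R$, equivalently continuity at $\emptyset$: if $\cD^o\ni V=\bigsqcup_{j\ge 1}W_j$ with $W_j\in\cD^o$, then $\nu_0(V)=\sum_j\nu_0(W_j)$. By Fatou applied to $J_{s,\cdot}$ one gets $\sum_j\nu_0(W_j)\le\nu_0(V)$ for free, so only the reverse inequality is at stake; the defect $\nu_0(V)-\sum_{j\le N}\nu_0(W_j)=\nu_0\bigl(\bigsqcup_{j>N}W_j\bigr)$ is the $\nu_0$-mass of boxes accumulating on a countable union $\Gamma$ of dyadic hyperplanes. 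One controls this exactly as in~\cite[Appendix A]{HLLS}: $\cont_\delta(A\cap\partial V')=0$ for every $V'\in\cD^o$ together with the one-point bound $G\le c$ in \eqref{eq:speed} and a Chebyshev estimate over scale-$n$ boxes forces the mass near $\Gamma$ to tend to $0$. \textbf{This step is the main obstacle}: one cannot blindly interchange $\lim_s$ with an infinite sum, and the vanishing of the $\delta$-content of $A$ along the accumulation set must be extracted from the boundary hypothesis \eqref{eq:boundarycond} and \eqref{eq:speed}. Granting it, the Carath\'eodory--Hahn extension produces a Borel measure $\nu$ on $\Dmo$ with $\nu(V)=\cont_\delta(A\cap V)$ for all $V\in\cD^o$; $\nu$ is a random measure because each $\nu(V)$ is the a.s.\ limit of the measurable functionals $J_{s,V}$ of $A$, and measurability of $\nu(B)$ for general Borel $B$ follows by a monotone-class argument from the case $B\in\cD^o$.

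It remains to see that $\nu$ is carried by $A$ and non-atomic. If $\overline V\cap A=\emptyset$ for $V\in\cD^o$, then $\dist(\cdot,A)$ is bounded below on $V$, so $J_{s,V}=0$ for large $s$ and $\nu(V)=0$; writing the open set $\Dmo\setminus A$ as a countable union of such boxes together with subsets of dyadic hyperplanes (which are $\nu$-null, $\nu(\partial V')=\cont_\delta(A\cap\partial V')=0$) gives $\nu(\Dmo\setminus A)=0$. For non-atomicity, fix a point $x\in\Dmo$; the scale-$n$ dyadic box $V_n$ containing $x$ lies in $\cD^o$ for $n$ large, $\mu[\nu(V_n)]=\int_{V_n}G\le c\,2^{-nd}\to 0$, and $\nu(V_n)\downarrow\nu(\{x\})$, whence $\nu(\{x\})=0$ a.s. To upgrade to ``no atoms anywhere, a.s.'', use the two-point bound: for $V\in\cD_n^o$,
\[
\mu[\nu(V)^2]=\int_V\!\!\int_V G(z,w)\,dz\,dw\le c\int_V\!\!\int_V|z-w|^{-\eta}\,dz\,dw\le c'\,2^{-n(d+\delta)},
\]
so by Chebyshev $P\bigl(\max_{V\in\cD_n^o}\nu(V)\ge\varepsilon\bigr)\le\varepsilon^{-2}\sum_{V\in\cD_n^o}\mu[\nu(V)^2]\le c''\varepsilon^{-2}2^{-n\delta}\to 0$. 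Since an atom of mass $\ge\varepsilon$ at an interior point $x$ forces $\max_{V\in\cD_n^o}\nu(V)\ge\varepsilon$ for all large $n$, a.s.\ there is no atom of mass $\ge 1/k$ for each $k$, and intersecting over $k$ shows $\nu$ is non-atomic a.s. This completes the extension claim.
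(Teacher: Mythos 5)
Your proposal is correct and takes essentially the same route as the paper, whose own ``proof'' is simply the invocation of the construction in~\cite[Appendix A]{HLLS}: build a finitely additive premeasure on the ring generated by $\cD^o$ from the a.s.\ simultaneous limits $J_V$, extend by Carath\'eodory, and use the first/second-moment bounds of \eqref{eq:speed} to get support on $A$ and non-atomicity; the continuity-at-$\emptyset$ step you defer is exactly what the cited appendix supplies (it can be closed by noting that for each of the countably many boxes $V$ the masses of the thin dyadic shells along the ``open'' faces of $V$ are monotone in the shell width and have expectation $\le c\,{\rm Vol} \to 0$, giving a.s.\ inner regularity with respect to a compact class). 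Two minor repairs: the identity $\nu(\partial V')=\cont_\delta(A\cap\partial V')$ is neither justified nor needed (bound $\nu$ of shells around $\partial V'$ in expectation and use continuity from above instead), and since the constants in \eqref{eq:speed} are allowed to depend on $V$ (and do, for LERW, near $0$), the Chebyshev sum in your non-atomicity argument should run over the scale-$n$ descendants of a fixed $V_0\in\cD^o$, whose pair constant is fixed, followed by a countable union over $V_0$, rather than over all of $\cD_n^o$ with a single constant.
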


\begin{rem} We now briefly explain why the restriction to the collection of dyadic boxes $\cD^o$ is necessary. Let $A$ be a random closed subset of $\mathbb{D}$.  Note that for some open set $U$
\[
\cont_\delta(A\cap U) =\cont_\delta (A\cap \overline{U}),
\]
(assuming both sides exist). Hence one can not expect  that, a priori, the outer measure defined by
\[
\nu(G)\coloneqq \cont_\delta(A\cap G), \qquad G \subset \mathbb{R}^d
\]
indeed induces a measure. However, for random fractals such as the scaling limit of LERW, one can check that conditions of Proposition \ref{prop:generalMC} hold for boxes that are not too close to the boundary of the domain or the starting point of the curve. This justifies our choice of $\cD^o$.
\end{rem}

\subsection{Weak convergence of probability measures}\label{PROKH}
In this subsection, we let ${\cal M} (\overline{\mathbb{D}} )$ be the space of all finite measures on $\overline{\mathbb{D}} $ equipped with the topology of the weak convergence. The space ${\cal M} (\overline{\mathbb{D}} )$ is complete, metrizable and separable (see Section 6, in particular~\cite[Theorem 6.8]{Bil}).

We now briefly recall basic facts on the weak convergence of probability measures here. See \cite[Chapter 3]{EK} for details and proofs.

Let $(M, d)$  be a metric space with its Borel sigma algebra ${\cal B} (M)$. We denote the space of all probability measures on $ \big( M,  {\cal B} (M) \big) $ by ${\cal P} (M)$, where the space ${\cal P} (M)$ is equipped with the topology of weak convergence. For a subset $A \subset M$ and $\varepsilon > 0$, we write $A_{\varepsilon} = \{  x \in M \ | \ \exists y \in A \text{ such that } d(x,y) < \varepsilon \}$ for the $\varepsilon$-neighborhood of $A$. The \emph{Prokhorov metric} $\pi : {\cal P} (M)^{2} \to [0,1]$ is defined by 
\begin{equation}\label{prokhorov}
    \pi (\mu, \nu) \coloneqq 
    \inf \Big\{ \varepsilon > 0 \ \Big| 
    \begin{array}{c} 
    \text{for all } A \in {\cal B} (M)  \\
    \ \mu (A) \le \nu (A_{\varepsilon} ) + \varepsilon \text{ and } \nu (A) \le \mu (A_{\varepsilon} ) + \varepsilon  
    \end{array}
    \Big\}.
\end{equation}

It is well known that if $(M, d)$ is separable, convergence of measures in the Prokhorov metric is equivalent to weak convergence of measures. Therefore, $\pi$ is a metrization of the topology of weak convergence on ${\cal P} (M)$. It is also well known that if $(M, d)$ is a compact metric space, the metric space $\big( {\cal P} (M), \pi \big) $ is also compact. If $(M,d)$ is a complete and separable metric space, then the metric space $\big( {\cal P} (M), \pi \big) $ is also complete.

We now turn to random sets. To characterize the speed of weak convergence of random closed sets with respect to the $d_{\text{Haus}}$ metric, we introduce the so-called  L\'evy metric as follows. For $X,Y$  two random closed subset of $\mathbb{D}$, writing $T_X(K)=P(X\cap K\neq\emptyset)$ for closed $K\subset\mathbb{D}$, we define their distance under  \emph{L\'evy metric} as
\begin{equation}\label{eq:Levy}
    d_{\text{L\'evy}}(X,Y) \coloneqq
    \inf \left\{\varepsilon >0  \ \Big| 
    \begin{array}{c}
    \mbox{for all closed }K\subset\mathbb{D} \\
    \ T_X(K)\leq T_{B(Y,\varepsilon)}(K)+\varepsilon, \\  T_Y(K)\leq T_{B(X,\varepsilon)}(K)+\varepsilon
    \end{array}
    \right\}.
\end{equation}
It is well-known that the convergence under L\'evy metric is equivalent to the weak convergence of random closed sets with respect to the $d_{\text{Haus}}$-metric, see \cite{Molchanov}.

\subsection{The scaling limit of 3D LERW}\label{se:scalinglimit}
In this subsection, we will briefly review known results on the existence of the scaling limit of the three-dimensional loop-erased random walk, and give a slightly improved version of Kozma's result tailored to our needs in this work.

Recall that the metric space $\big( {\cal H} (\overline{\mathbb{D}} ), d_{\text{Haus}} \big)$ was defined as in \eqref{Haus} and the L\'evy metric was defined in \eqref{eq:Levy}.  Recall that for $m>0$, $\eta_m$ stands for the LERW in $\mathbb{D}_m$. With slight abuse of notation (and truncating the last step if needed), we can regard the LERW $\eta_{m}$ as a random element of ${\cal H} (\overline{\mathbb{D}} )$. 
Gady Kozma made the first breakthrough on the existence of the scaling limit of 3D LERW. In \cite{Kozma}, it was proved that the trace of 3D LERW converges as a random set along dyadic scales, with a polynomial speed in the L\'evy metric. More precisely, it was proved (see Theorems 5 and 6, ibid.) that for $n\in\Zp$, there exists $\KK$, a random closed subset of $\mathbb{D}$, and $0<c,C<\infty$ such that
\begin{equation}\label{eq:convdyadic}
d_{\text{L\'evy}}(\eta_{2^n},\KK)\leq C2^{-cn}.
\end{equation}
In the same work, it was also proved (see Subsection 6.1, ibid.) that the $\KK$ is rotation-invariant. Moreover, the scaling limit of 3D LERW is scaling-invariant in the following sense:
for $r>0$ write
\begin{equation} \label{eq:scaleInvariance}
  \KK^r \coloneqq r\KK
\end{equation}
for the $r$-blowup of $\KK$ and let $D_{r,2^n}=2^{-n}\mathbb{Z}^3\cap r\mathbb{D}$ be the discretization of $r\mathbb{D}$ and write $\eta^{r}_{2^n}$ for the LERW in $D_{r,2^n}$ started from the origin. Then 
there exist constants $c(r),C(r)\in(0,\infty)$, such that
\begin{equation}\label{eq:convrdya}
d_{\text{L\'evy}}(\eta^{r}_{2^n},\KK^r)\leq C2^{-cn}.
\end{equation}
In other words, $\KK^r$ is the scaling limit of LERW in the domain $r\mathbb{D}$ along dyadic scales.

It is noteworthy that the scaling invariance of $\KK$ actually implies that the convergence in \eqref{eq:convdyadic} follows not only along dyadic scales, but also as $m\to\infty$ continuously. See~\cite[Subsection 4.1]{Natural} for a detailed explanation of the proof of the following proposition.

\begin{prop}\label{prop:Hanyscale} 
There exists $c,C\in (0,\infty)$ such that as $m\to \infty$
\begin{equation}
d_{\text{L\'evy}}(\eta_{m},\KK)\leq Cm^{-c}.
\end{equation}
\end{prop}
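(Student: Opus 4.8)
The plan is to bootstrap the dyadic convergence in~\eqref{eq:convdyadic} to a continuous-scale statement using the scaling invariance recorded in~\eqref{eq:convrdya}, together with the crude observation that changing the mesh by a bounded multiplicative factor changes the domain by a bounded factor. Fix $m$ large and write $m = 2^n r$ with $n = \lfloor \log_2 m \rfloor$ and $r \in [1,2)$. Then $\eta_m$ is the LERW in $\mathbb{D}_m = m^{-1}\mathbb{Z}^3 \cap \mathbb{D} = 2^{-n}\mathbb{Z}^3 \cap (r^{-1}\mathbb{D})\cdot r$; more precisely, after dilating the lattice $m^{-1}\mathbb{Z}^3$ by the factor $r$ we recover $2^{-n}\mathbb{Z}^3$, and $\eta_m$ dilated by $r$ is exactly $\eta^{1/r}_{2^n}\cdot(\text{nothing})$ — that is, $r\,\eta_m$ has the law of the LERW in $2^{-n}\mathbb{Z}^3 \cap (r^{-1}\mathbb{D})$, which in the notation of~\eqref{eq:convrdya} is $\eta^{\,1/r}_{2^n}$ up to the trivial rescaling. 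I would first spell out this identification carefully and note that $1/r \in (1/2,1]$ ranges over a compact set bounded away from $0$, so the constants $c(1/r), C(1/r)$ in~\eqref{eq:convrdya} can be taken uniform over this range.

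Next, from~\eqref{eq:convrdya} applied with parameter $1/r$ we get $d_{\text{L\'evy}}(r\,\eta_m, \KK^{1/r}) \le C 2^{-cn} \le C' m^{-c}$. Since dilation by the bounded factor $r \in [1,2)$ is a Lipschitz operation on $\mathcal{H}(\overline{\mathbb{D}})$ and interacts controllably with the L\'evy metric (shrinking by $1/r \le 1$ can only decrease distances, and in any case distorts the L\'evy distance by at most a bounded factor), this transfers to $d_{\text{L\'evy}}(\eta_m, r^{-1}\KK^{1/r}) \le C'' m^{-c}$. But $r^{-1}\KK^{1/r} = r^{-1}\cdot(r^{-1}\KK)$... here I need to be careful with the direction: by definition $\KK^{1/r} = r^{-1}\KK$, so $r^{-1}\KK^{1/r} = r^{-2}\KK$, which is not $\KK$. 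The correct bookkeeping is the opposite one: I should dilate $\eta_m$ (a walk on $m^{-1}\mathbb{Z}^3$ in $\mathbb{D}$) so that its lattice becomes $2^{-n}\mathbb{Z}^3$, which means dilating by $m/2^n = r$, and then it lives in the domain $r\mathbb{D}$, i.e.\ $r\,\eta_m \overset{d}{=} \eta^{r}_{2^n}$. Then~\eqref{eq:convrdya} gives $d_{\text{L\'evy}}(r\,\eta_m,\KK^r)\le C2^{-cn}$, and dividing back by $r$ (a contraction since $r \ge 1$, hence non-expanding on the relevant metric, or at worst bounded distortion) yields $d_{\text{L\'evy}}(\eta_m, r^{-1}\KK^r) = d_{\text{L\'evy}}(\eta_m,\KK)\le C'm^{-c}$, since $r^{-1}\KK^r = r^{-1}(r\KK) = \KK$. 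That is exactly the claim.

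The steps in order, then, are: (i) write $m=2^nr$ with $r\in[1,2)$ and identify the law of $r\,\eta_m$ with $\eta^r_{2^n}$ via the change-of-lattice-and-domain dilation; (ii) observe that the constants in~\eqref{eq:convrdya} are uniform over the compact range $r\in[1,2)$ (this uses that $r\mathbb{D}$ stays at bounded distance from $\mathbb{D}$, so Kozma's estimates — which are really about a fixed-scale geometry — apply with uniform constants; citing~\cite[Subsection 3.7]{Natural} is legitimate here); (iii) check the elementary fact that for a fixed dilation factor $r$ bounded between $1$ and $2$, the map $A\mapsto r^{-1}A$ distorts $d_{\text{L\'evy}}$ by at most a bounded multiplicative constant, so $d_{\text{L\'evy}}(\eta_m,\KK)\le C m^{-c}$ follows from $d_{\text{L\'evy}}(r\,\eta_m, r\KK) = d_{\text{L\'evy}}(r\,\eta_m, \KK^r) \le C2^{-cn}$.

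The main obstacle is step (iii) together with the uniformity in step (ii): one must verify that the L\'evy metric behaves well under a bounded dilation (it does, but the definition in~\eqref{eq:Levy} is phrased with an additive $\varepsilon$ and a neighborhood $B(\cdot,\varepsilon)$, so one needs the bound $d_{\text{L\'evy}}(rA,rB)\le C\,d_{\text{L\'evy}}(A,B)$ for $r\in[1,2]$, which is a short but genuine computation comparing $T_{rX}(K) = T_X(r^{-1}K)$ and matching up neighborhoods), and that Kozma's polynomial rate has constants that do not blow up as the domain ranges over $\{r\mathbb{D}: r\in[1,2)\}$. Both are handled in~\cite[Subsection 3.7]{Natural}, so the proof here amounts to assembling these ingredients; I would keep the write-up short and refer to that subsection for the details of the uniformity.
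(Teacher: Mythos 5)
Your proposal is correct and follows essentially the same route as the paper, which itself only sketches the argument: combine the dyadic-scale convergence in the dilated domains $r\mathbb{D}$ from~\eqref{eq:convrdya} with the scaling relation $\KK^r=r\KK$, after identifying $r\,\eta_m$ in law with $\eta^r_{2^n}$ for $m=2^n r$, $r\in[1,2)$, and deferring the uniformity of the constants over this compact range to~\cite[Subsection 3.7]{Natural}. Your mid-paragraph bookkeeping slip is corrected within the proposal itself, and the final identification ($r^{-1}\KK^r=\KK$, with the contraction by $r^{-1}\le 1$ non-expanding for $d_{\text{L\'evy}}$) is the right one.
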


Given the equivalence of weak convergence and L\'evy metric (as discussed at the end of Subsection \ref{PROKH}), one can view 
Proposition~\ref{prop:Hanyscale} as a quantitative statement of the weak convergence of LERW towards its scaling limit. Consequently, the weak convergence is a corollary of Proposition~\ref{prop:Hanyscale}.

\begin{cor} \label{cor:wCONV}
  For $m\in\R^+$,
  \[
    \eta_m\overset{\rm w}{\to}\KK\qquad\mbox{ as }\qquad m\to\infty
  \]
  in the topology generated by the Hausdorff metric.
\end{cor}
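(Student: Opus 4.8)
\textbf{Proof plan for Corollary~\ref{cor:wCONV}.}

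The plan is to deduce the weak convergence from the quantitative estimate in Proposition~\ref{prop:Hanyscale} together with the abstract equivalence between convergence in the L\'evy metric and weak convergence of random closed sets with respect to the Hausdorff metric, as recalled at the end of Subsection~\ref{PROKH} (see \cite{Molchanov}). First I would record that, since $\overline{\mathbb{D}}$ is a compact metric space, $\big({\cal H}(\overline{\mathbb{D}}), d_{\text{Haus}}\big)$ is compact, complete and separable, so the space ${\cal P}\big({\cal H}(\overline{\mathbb{D}})\big)$ of laws of random compact subsets is metrizable by the Prokhorov metric, and on it weak convergence is equivalent to Prokhorov convergence. Viewing each $\eta_m$ (with its last step truncated if needed) and $\KK$ as random elements of ${\cal H}(\overline{\mathbb{D}})$, the statement to prove is exactly that the law of $\eta_m$ converges weakly to the law of $\KK$ as $m\to\infty$.

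The key step is the following: by Proposition~\ref{prop:Hanyscale} there are constants $c,C\in(0,\infty)$ with $d_{\text{L\'evy}}(\eta_m,\KK)\le Cm^{-c}$ for all large $m$, so in particular $d_{\text{L\'evy}}(\eta_m,\KK)\to 0$ as $m\to\infty$. By the cited equivalence (convergence in the L\'evy metric $\iff$ weak convergence of random closed sets in the $d_{\text{Haus}}$-metric), this immediately yields $\eta_m\xrightarrow{\rm w}\KK$ in the topology generated by the Hausdorff metric. One subtlety worth spelling out is that Proposition~\ref{prop:Hanyscale} is stated along a continuous parameter $m\to\infty$ (not merely along dyadic scales), which is precisely what is needed to conclude weak convergence for $m\in\R^+$ rather than only along the subsequence $m=2^n$; this is exactly the content that Proposition~\ref{prop:Hanyscale} adds on top of Kozma's original dyadic-scale result~\eqref{eq:convdyadic}.

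There is essentially no real obstacle here: the corollary is a direct translation of an already-established quantitative bound into qualitative language, and the only thing to be careful about is to invoke the correct metric-space facts — compactness of ${\cal H}(\overline{\mathbb{D}})$, so that there is no issue with tightness, and the Molchanov equivalence, so that the L\'evy-metric convergence genuinely encodes weak convergence of the induced laws. If one prefers to avoid citing the equivalence as a black box, the same conclusion can be reached by hand: using $d_{\text{L\'evy}}(\eta_m,\KK)\to 0$ one checks that $E[F(\eta_m)]\to E[F(\KK)]$ for every bounded Lipschitz $F$ on $\big({\cal H}(\overline{\mathbb{D}}), d_{\text{Haus}}\big)$, by approximating $F$ through functionals of the hitting probabilities $T_{(\cdot)}(K)$ that appear in the definition~\eqref{eq:Levy} of the L\'evy metric; but given the references available, the cleanest route is simply to quote the equivalence and conclude.
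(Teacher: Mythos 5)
Your proposal is correct and follows essentially the same route as the paper: the paper deduces the corollary directly from Proposition~\ref{prop:Hanyscale} combined with the equivalence of L\'evy-metric convergence and weak convergence of random closed sets in the $d_{\text{Haus}}$-metric recalled at the end of Subsection~\ref{PROKH}. Your additional remarks on the compactness of ${\cal H}(\overline{\mathbb{D}})$ and the optional by-hand verification are fine but not needed beyond the cited equivalence.
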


  Some topological properties of ${\cal K}$ are studied in \cite{SS}. We recall that $\gamma \in {\cal H} (\overline{\mathbb{D}} ) $ is a simple curve if $\gamma$ is homeomorphic to the interval $[0, 1]$. For a simple curve $\gamma \in {\cal H} (\overline{\mathbb{D}} )$, we write $\gamma^{s}$ and $\gamma^{e}$ for its end points. Let 
  \begin{equation}\label{SIMPLE}
  \Gamma \coloneqq \big\{ \gamma \in {\cal H} (\overline{\mathbb{D}} ) \ | \  \gamma \text{ is a simple curve satisfying }  \gamma^{s} = 0 \text{ and } \gamma \cap \partial \mathbb{D} = \{  \gamma^{e} \}  \big\}.
  \end{equation}
 Theorem 1.2 of \cite{SS} shows that ${\cal K} \in \Gamma$ almost surely. In \cite{Hausdorff}, it is proved that the Hausdorff dimension of ${\cal K}$ is equal to $\beta$ almost surely.
  
 In \cite{Natural}, the last two authors improve the convergence result by showing that the (renormalized) occupation measure of LERW has a scaling limit and rescaled LERW converges as a naturally parametrized curve to the curve given through parametrizing $\KK$ by the limiting occupation measure, as discussed in Subsection~\ref{subsec:main}. 
 More precisely, we have the following theorem. Recall that
\[
  \mu_{m}=  (f_m)^{-1} \sum_{x\in \mathbb{D}_m} \delta_{x\in \eta_m}
\]
is the occupation measure of LERW scaled by the one-point function.
\begin{theorem}\label{thm:weakconv}
As $m\in\Rp$ tends to infinity, 
\begin{equation}\label{eq:weakconv}
(\KK_{m},\mu_{m})\xrightarrow{\rm w} (\KK, \mu)
\end{equation}
for some random $\mu$ supported on $\KK$ which is measurable with respect to $\KK$, in the product topology of Hausdorff metric for sets and weak convergence topology for measures.
\end{theorem}
We call $\mu$ the limiting occupation measure. Let $\eta$ be the random continuous curve obtained through parametrizing $\KK$ by $\mu$. 

The one-point estimates over dyadic scales along dyadic scales in~\cite{Escape}, allow taking the scaling factor $f_{2^n}$ in \eqref{eq:scalingfactor} as a multiple of $ 2^{\beta n}$, for $n\in\mathbb{Z}^+$. Recall that we write
\[
  \overline{\mu}_{2^n}= 2^{-\beta n} \sum_{x\in \mathbb{D}_{2^n}} \delta_{x\in \eta_{2^n}}
\]
and $\overline{\eta}_{2^n}$ is the LERW on $\mathbb{D}_{2^n}$ parametrized by $\overline{\mu}_{2^n}$ (compare~\eqref{eq:explicitScaling}).
Therefore, the results in~\cite{Escape, Natural} imply that $\bar{\eta}_{2^{n}}$ converges weakly to some $\overline{\eta}$ in natural parametrization. In other words, as $n\in\Zp$ tends to infinity, 
\[
    \overline{\eta}_{2^n}\xrightarrow{\rm w} \overline{\eta}
\]
in the topology generated by $\rho$-metric. 

\subsection{Up-to-constant  one- and two-point estimates} \label{section:oneandtwo}
In this subsection, we present various up-to-constant one- and two-point estimates on the hitting probabilities for three-dimensional loop-erased random walk as well as its scaling limit. These estimates will play an important role in the next section. 

We start with one-point estimates. Given $x\in\Dmo$, write $d_x=\min\{|x|,1-|x|\}$ and let
\begin{equation}\label{eq:Cxdef}
C_x=\begin{cases} 
d_x^{-3+\beta} &\mbox{ if }|x|\leq 1/2; \\
d_x^{\beta-1}&\mbox{ if } |x|>1/2.
\end{cases}
\end{equation}

The following estimates follow from~\cite[Equation (5.13)]{Natural} and~\cite[Corollary 7.4]{Natural}.

\begin{prop}\label{prop:uptocb1}
For $0<r<d_x/100$, there exists some $M_0(x,r)$ such that for all $m\geq M_0$,
\begin{equation}
P\Big(\eta_m \cap B(x,r) \neq \emptyset\Big)\asymp C_x  r^{3-\beta}. \label{eq:oneballd}
\end{equation}
Equivalently, with the same assumption as above,
\begin{equation}
P\Big(\KK \cap B(x,r) \neq \emptyset\Big)\asymp C_x  r^{3-\beta}.
\end{equation}
\end{prop}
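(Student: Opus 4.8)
\textbf{Proof proposal for Proposition~\ref{prop:uptocb1}.}

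The plan is to derive the ball-hitting estimate from the already-established up-to-constant one-\emph{point} estimates for LERW, together with standard separation/quasi-multiplicativity properties of loop-erased walk. First I would recall the one-point estimate in its dyadic form (the rephrasing of~\cite[Theorem 1.1]{Escape} alluded to in the excerpt, i.e.\ Proposition~\ref{prop:dyadic-onePoint}): for $x \in \Dmo$ and $n$ large, $P(x_{2^n} \in \eta_{2^n}) \asymp C_x 2^{-(3-\beta)n}$, and more generally the claimed up-to-constant form $P(x_m \in \eta_m) \asymp C_x m^{\beta-3}$ for all large $m$, which is stated to follow from~\cite{Escape,Natural}. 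The reduction from a point to a ball of radius $r$ is then the statement that hitting $B(x,r)$ is, up to constants, like hitting a single point at scale $m r$: writing $k$ for the integer with $2^{-k} \asymp r$, one expects $P(\eta_m \cap B(x,r) \neq \emptyset) \asymp (mr)^{\beta - 3} \cdot m^{\beta-3}/(mr)^{\beta-3} \cdot \dots$ — more cleanly, $P(\eta_m \cap B(x,r)\neq\emptyset) \asymp C_x r^{3-\beta}$ by combining the one-point estimate at the \emph{coarse} scale $r^{-1}$ (which governs whether the macroscopic curve comes within $r$ of $x$ at all) with the fact that once the curve enters $B(x,2r)$ it hits $B(x,r)$ with probability bounded below, uniformly.

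Concretely, the main steps I would carry out are: (i) \textbf{Upper bound.} Cover $B(x,r)$ by $O(1)$ lattice points $y$ at mutual distance $\asymp r$ — actually, decompose $\eta_m \cap B(x,r) \neq \emptyset$ according to the point of $B(x,r)$ that is hit, but more efficiently use a union bound over a bounded number of balls of radius $r$ together with the observation that $P(\eta_m \cap B(x,r)\neq\emptyset) \le \sum_{y} P(y_m \in \eta_m)$ is too lossy; instead use that the expected number of LERW points in $B(x,r)$ is $\asymp (m r)^3 \cdot C_x m^{\beta-3}$ by summing the one-point function over $B(x,r)_m$ (the one-point function is roughly constant $\asymp C_x m^{\beta-3}$ on this ball since $r \ll d_x$), so $E[\#(\eta_m \cap B(x,r))] \asymp C_x m^\beta r^3$; then a \emph{second-moment / quasi-multiplicativity} input — LERW restricted to $B(x,2r)$, conditioned to enter it, has $\asymp (mr)^\beta$ points there with probability bounded below (this is the local structure of LERW, essentially the growth-exponent estimate of~\cite{Growth} transported to scale $mr$) — gives $P(\eta_m \cap B(x,r)\neq\emptyset) \lesssim C_x m^\beta r^3 / (mr)^\beta = C_x r^{3-\beta}$. (ii) \textbf{Lower bound.} This is the reverse: $P(\eta_m \cap B(x,r)\neq\emptyset) \ge P(x_m \in \eta_m) \asymp C_x m^{\beta-3}$ is far too weak; instead I would run the LERW and note that the macroscopic curve passes within distance $r$ of $x$ with probability $\gtrsim C_x r^{3-\beta}$ — this is exactly the one-point ball estimate at the coarse scale, which one gets by a separation-lemma argument (the curve from $0$ to $\partial \DD$ must pass a neighborhood of $x$, and the probability it comes within $r$ is controlled by comparing the domain $\DD$ with and without a small ball removed near $x$, using the second-moment lower bound on the number of coarse-scale pieces near $x$). (iii) \textbf{Transfer to $\KK$.} The equivalent statement for the scaling limit follows by weak convergence: since $r < d_x/100$ is fixed and the ball $B(x,r)$ and its closure have the same hitting probability in the limit (boundary of $B(x,r)$ is polar for $\KK$ by the up-to-constant estimate just proved), Proposition~\ref{prop:Hanyscale} (convergence in the L\'evy metric) lets me pass $P(\eta_m \cap B(x,r)\neq\emptyset) \to P(\KK \cap B(x,r)\neq\emptyset)$, and the $\asymp$ survives.

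The \textbf{main obstacle} is the two-sided comparison between "hitting a ball of radius $r$" and the one-point function — i.e.\ showing that the number of LERW points inside $B(x,2r)$, given the curve enters it, is $\asymp (mr)^\beta$ with probability bounded below, uniformly in $m$, $r$ and the location $x$. This is a quasi-multiplicativity / separation statement for LERW at a mesoscopic scale and is where one genuinely uses the structure theory of 3D LERW (the growth exponent $\beta$ and the fact that it controls the number of steps of LERW crossing an annulus, from~\cite{Growth} and the refinements in~\cite{Escape,Natural}). Everything else — the first-moment computation, the union bounds, and the weak-convergence transfer to $\KK$ — is routine given the cited inputs. I would structure the write-up so that this mesoscopic estimate is quoted as a black box from~\cite{Natural} (indeed the excerpt says the proposition "follows from~\cite[Equation (4.20)]{Natural} and~\cite[Corollary 6.4]{Natural}"), making the proof a short deduction rather than a from-scratch argument.
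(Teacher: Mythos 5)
The paper gives no internal proof of Proposition~\ref{prop:uptocb1} at all: it is imported verbatim, with the one-line remark that it ``follows from \cite[Equation (4.20)]{Natural} and \cite[Corollary 6.4]{Natural}.'' So your moment-method reconstruction is a genuinely different (and more self-contained) route than what the paper does. Its skeleton is sound and is essentially how such bounds are established in \cite{Natural} itself: the first moment $E[\#(\eta_m\cap B(x,r))]\asymp C_x m^{\beta}r^{3}$ from the one-point function; for the upper bound, a lower bound on the conditional number of points given entry into $B(x,2r)$ (of order $(mr)^{\beta}$ with conditional probability bounded below); for the lower bound, the cleanest argument is Paley--Zygmund with the up-to-constant two-point bound (Proposition~\ref{prop:uptocb2}), which gives $(EN)^2/E[N^2]\asymp C_x r^{3-\beta}$ directly -- this is sharper and more concrete than the ``domain with and without a small ball removed'' comparison you gesture at. But be aware that both of these inputs (the mesoscopic point-count estimate transported from \cite{Growth}/\cite{Escape}, and the two-point bound, which in this paper is itself only cited from \cite{Natural}) are exactly the content of the external references; so after black-boxing them your write-up collapses to the same citations the paper makes, which is fine, but it is not an independent proof.

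One step of your sketch is wrong as stated: in (iii) you justify the transfer to $\KK$ by saying the boundary sphere $\partial B(x,r)$ ``is polar for $\KK$ by the up-to-constant estimate just proved.'' It is not: whenever $\KK$ enters $B(x,r)$ it must touch $\partial B(x,r)$, so the sphere is hit with positive probability; and the covering union bound gives $\asymp r^{2}\epsilon^{-2}\cdot\epsilon^{3-\beta}=r^{2}\epsilon^{1-\beta}$, which diverges as $\epsilon\to0$ since $\beta>1$. The statement actually needed for convergence of hitting probabilities is that $\KK$ a.s.\ does not touch $\overline{B(x,r)}$ without entering the open ball, which is Proposition~\ref{prop:continuity}; the paper proves that separately in Section~\ref{se:continuity} by a nontrivial coupling argument, and it does not follow directly from the up-to-constant bounds. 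For the present $\asymp$ statement you do not need it at all: sandwich the event between hitting the open ball of radius $r$ and the closed ball of radius $2r$ (portmanteau for the Hausdorff topology, or the L\'evy-metric rate in Proposition~\ref{prop:Hanyscale}) and absorb the factor $2^{3-\beta}$ into the constants. With that fix, and with the two cited estimates taken as black boxes, your argument goes through.
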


A corollary of this is the following conclusion that $\KK$ leaves no content on the surface of any dyadic box.
\begin{cor}\label{cor:plane}
Let $L\subset\Rt$ be a plane. Then ${\cont}_\beta(\KK \cap L)=0$ a.s.
\end{cor}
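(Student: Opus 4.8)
\textbf{Proof plan for Corollary~\ref{cor:plane}.}

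The plan is to deduce this from the up-to-constant one-ball estimate of Proposition~\ref{prop:uptocb1} by a first-moment (covering) argument, since the key fact $3-\beta > 4/3 > 1$ makes the natural covering sum tend to zero. Fix a plane $L$. By $\sigma$-additivity of content on a nested sequence of dyadic boxes exhausting $\mathbb{D}\setminus\{0,\partial\mathbb{D}\}$ and by scaling invariance, it suffices to show $\cont_\beta(\KK\cap L\cap V)=0$ a.s.\ for each $V\in\cD^o$; on such $V$ one has a uniform lower bound $d_x\geq c(V)>0$ for all $x\in V$, so the constant $C_x$ in \eqref{eq:Cxdef} is bounded above by a constant depending only on $V$.

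First I would cover $L\cap V$ at scale $r$ by a family of balls $B(x_i,r)$ with centers $x_i$ on an $r$-net of the two-dimensional set $L\cap V$, so that the number of balls is $N_r \asymp c(V) r^{-2}$. Then
\begin{equation}\label{eq:planecover}
\EE\Big[{\rm Vol}\big(B(\KK\cap L\cap V, r)\big)\Big]
\leq \sum_{i=1}^{N_r} {\rm Vol}\big(B(x_i, 2r)\big)\, P\big(\KK\cap B(x_i,2r)\neq\emptyset\big)
\leq C(V)\, r^{-2}\cdot r^{3}\cdot r^{3-\beta} = C(V)\, r^{4-\beta},
\end{equation}
using Proposition~\ref{prop:uptocb1} with $C_{x_i}\leq C(V)$ (the radii here are $\ll d_{x_i}/100$ for $r$ small). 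Hence $\EE[\cont_{\beta,r}(\KK\cap L\cap V)] = r^{-3+\beta}\,\EE[{\rm Vol}(B(\KK\cap L\cap V,r))] \leq C(V)\, r^{4-\beta-3+\beta} = C(V)\, r$. Wait — I should be careful with the bookkeeping: $\cont_{\delta,r}(A)=r^{-d+\delta}{\rm Vol}(B(A,r))$ with $d=3$, $\delta=\beta$, so $\cont_{\beta,r}(A)=r^{\beta-3}{\rm Vol}(B(A,r))$, and $\EE[\cont_{\beta,r}(\KK\cap L\cap V)]\leq C(V)\,r^{\beta-3}r^{4-\beta}=C(V)\,r$, which tends to $0$ as $r\to 0$.

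To upgrade this expectation bound to an almost-sure statement, I would run \eqref{eq:planecover} along the dyadic sequence $r=2^{-k}$: since $\sum_k \EE[\cont_{\beta,2^{-k}}(\KK\cap L\cap V)]\leq C(V)\sum_k 2^{-k}<\infty$, Borel--Cantelli (or simply Fatou applied to $\liminf_k \cont_{\beta,2^{-k}}$) gives $\liminf_{k\to\infty}\cont_{\beta,2^{-k}}(\KK\cap L\cap V)=0$ a.s.; combined with the fact that, on the event $\KK\cap V\in\Gamma$ (a.s., by Theorem~1.2 of \cite{SS}), the Minkowski content $\cont_\beta(\KK\cap L\cap V)$ exists whenever the corresponding limit exists, one concludes it equals $0$. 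Actually the cleanest route avoids asserting existence a priori: monotonicity of ${\rm Vol}(B(A,r))$ is not available, but the dyadic $\liminf$ being $0$ together with a routine comparison of $\cont_{\beta,r}$ for $r\in[2^{-k-1},2^{-k}]$ (which changes the content by at most a bounded multiplicative factor) shows $\liminf_{r\to 0}\cont_{\beta,r}=0$, i.e.\ the $\beta$-content vanishes. Finally, taking a countable exhaustion $V_j\uparrow \mathbb{D}\setminus\{0\}$ of boxes in $\cD^o$ and noting $\cont_\beta$ does not charge $\{0\}$ or $\partial\mathbb{D}$ (by one-point estimates, or because $\KK\cap\partial\mathbb{D}$ is a single point), one gets $\cont_\beta(\KK\cap L)=0$ a.s.

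The main obstacle is purely a matter of care rather than depth: one must make sure the covering-number estimate and the applicability of Proposition~\ref{prop:uptocb1} (which requires $r$ small relative to $d_x$ and $m\ge M_0(x,r)$, but the latter is irrelevant once we pass to $\KK$ via the equivalent continuum statement in that proposition) are uniform over the net, and that passing from the dyadic-scale $\liminf$ to the genuine $r\to 0$ Minkowski content does not hide a subtlety — handled by the elementary observation that $\cont_{\beta,r}$ and $\cont_{\beta,r'}$ differ by at most a factor $(r'/r)^{3-\beta}\vee(r/r')^{\beta}\cdot(\text{geometry})$ bounded on $r'\in[r/2,r]$. No new probabilistic input beyond Proposition~\ref{prop:uptocb1} is needed.
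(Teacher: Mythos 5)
Your covering/first-moment strategy is exactly the intended derivation (the paper states Corollary~\ref{cor:plane} as an immediate consequence of Proposition~\ref{prop:uptocb1}, with no further input), and the arithmetic at the core is right: an $r$-net of $L\cap V$ of size $\asymp r^{-2}$, the uniform bound $C_x\le C(V)$ on $V\in\cD^o$, and the continuum bound $P(\KK\cap B(x,r)\neq\emptyset)\le C(V)r^{3-\beta}$ give $\EE[\cont_{\beta,r}(\KK\cap L\cap V)]\le C(V)\,r$. However, as written there are two genuine logical gaps. First, you conclude from the summable dyadic expectations only that $\liminf_{r\to 0}\cont_{\beta,r}=0$ and then assert ``i.e.\ the $\beta$-content vanishes.'' That is not valid: the Minkowski content is the \emph{limit}, so you must show $\limsup_{r\to 0}\cont_{\beta,r}=0$; a vanishing $\liminf$ only says the lower content is zero (equivalently, that \emph{if} the limit exists it is zero). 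The fix is already contained in your bound: since $\EE\big[\sum_k \cont_{\beta,2^{-k}}(\KK\cap L\cap V)\big]<\infty$, the dyadic contents tend to $0$ almost surely (equivalently Markov plus Borel--Cantelli, which gives the full limit, not merely the liminf), and your interpolation $\cont_{\beta,r}\le 2^{3-\beta}\cont_{\beta,2^{-k}}$ for $r\in[2^{-k-1},2^{-k}]$ then yields $\limsup_{r\to0}\cont_{\beta,r}=0$. The appeal to $\KK\in\Gamma$ is not needed and does not help here.

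Second, the opening reduction ``by $\sigma$-additivity of content on a nested sequence of dyadic boxes exhausting $\mathbb{D}$'' is not legitimate: Minkowski content is not a measure, and upper Minkowski content is not even countably subadditive (the rationals in $[0,1]$ have $1$-content $1$ while each point has content $0$), so knowing $\cont_\beta(\KK\cap L\cap V)=0$ for each of countably many boxes does not give $\cont_\beta(\KK\cap L)=0$. You must instead use a decomposition into \emph{finitely} many pieces, handling the neighborhoods of $0$ and of $\partial\mathbb{D}$ directly. This costs nothing extra: run the same net argument over all of $L\cap\overline{\mathbb{D}}$, using $C_x\asymp |x|^{\beta-3}$ (integrable on the plane near $0$ since $\beta>1$) and $C_x\asymp d_x^{\beta-1}\le C$ near $\partial\mathbb{D}$, and for the $O(1)$ net points within distance $\sim r$ of $0$ and the $O(r^{-1})$ net points within distance $\sim r$ of $\partial\mathbb{D}$ (where Proposition~\ref{prop:uptocb1} does not apply) simply bound the hitting probability by $1$; their contributions to $\EE[\cont_{\beta,r}]$ are $O(r^{\beta})$ and $O(r^{\beta-1})$, which vanish because $\beta>1$. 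With these two repairs the argument is complete and coincides with the paper's intended proof.
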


In \cite{Escape}, the last two authors obtain the following sharp one-point function asymptotics (see Theorem 1.1, ibid.) along dyadic scales. Recall that $\eta^m$ stand for the LERW on $\mathbb{D}_m$ as defined in Section \ref{se:intro}.

\begin{prop} \label{prop:dyadic-onePoint}
With the same assumptions as Proposition \ref{prop:uptocb1}, there exist universal constants $c > 0$, $\delta > 0$ and a constant $c(x) > 0$ such that for all $n\in\Zp$, 
\begin{equation}\label{eq:sharpone}
P\Big(x_{2^n} \in \eta_{2^n} \Big)= c(x) 2^{(\beta-3)n}\Big[ 1+O\big(d_x^{-c}2^{-\delta n}\big)\Big], \qquad \text{ as } n \to \infty.
\end{equation}
\end{prop}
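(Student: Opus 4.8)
\textbf{Proof proposal for Proposition~\ref{prop:dyadic-onePoint} (the last stated result).}

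The plan is to reduce the statement on the LERW one-point function to a non-intersection estimate between an independent simple random walk and a (reversed) infinite loop-erased random walk, and then to transfer this estimate to the Brownian/scaling-limit side where a clean asymptotic is available. First I would recall the standard decomposition: by Wilson's algorithm (or the Markov-type description of the loop-erasure), the event $\{x_{2^n}\in\eta_{2^n}\}$ can be written in terms of the walk $S_{2^n}$ from $0$ reaching $x_{2^n}$ along a loop-erased path, which by a last-exit / reversal decomposition factorizes (up to controllable errors) into the probability that the portion of $\eta_{2^n}$ from $0$ to $x_{2^n}$ is loop-erased times the probability that an independent walk started near $x_{2^n}$ avoids that path until it exits $\mathbb{D}_{2^n}$. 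Concretely, one writes $P(x_{2^n}\in\eta_{2^n})$ as a sum over configurations and isolates the ``escape probability'' $\Es$ of the LERW from $x_{2^n}$ against an independent SRW; this is exactly the object that was studied in~\cite{Escape} and that carries the exponent $3-\beta$.

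Next I would separate scales: split the ball $\mathbb{D}_{2^n}$ into an inner region around $x$ at scale comparable to $d_x 2^n$ and the complementary ``macroscopic'' region. Inside the inner region, the relevant LERW looks (after rescaling) like the infinite LERW $\eta^\infty$, and the non-intersection probability with an independent SRW converges, with a polynomial rate, to the corresponding Brownian non-intersection exponent; this is where the growth exponent $\beta$ enters and where the factor $2^{(\beta-3)n}$ is produced. In the macroscopic region one uses the convergence of $\eta_{2^n}$ to $\KK$ in the L\'evy metric (equation~\eqref{eq:convdyadic}, with its polynomial rate $C2^{-cn}$), together with separation-of-arms / coupling arguments, to show that the macroscopic contribution converges to a deterministic continuum quantity, again with a polynomial rate, and it is this continuum quantity (depending on the relative position of $x$ in $\mathbb{D}$) that becomes the constant $c(x)$. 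Multiplying the inner and macroscopic asymptotics, collecting the two polynomial error rates into a single $O(d_x^{-c}2^{-\delta n})$, and verifying that the $d_x$-dependence of the errors is at most polynomial (which follows from the up-to-constant bounds of Proposition~\ref{prop:uptocb1} controlling all intermediate quantities by powers of $d_x$), gives~\eqref{eq:sharpone}.

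The main obstacle, and the technical heart of the argument, is controlling the \emph{rate} of convergence of the discrete non-intersection probability to its continuum counterpart uniformly in the position $x$. This requires: (i) a coupling of the LERW (restricted to the inner region) with the infinite LERW, and of the infinite LERW with its Brownian-type scaling limit, with explicit polynomial error; (ii) a coupling of the independent SRW with an independent Brownian motion; and (iii) a quantitative ``decoupling'' showing that the non-intersection event is insensitive, up to polynomially small corrections, to the exact microscopic configuration of the two paths near their meeting point — i.e. one needs a form of the separation lemma with effective error bounds. Each of these ingredients was developed in~\cite{Escape}, so for the purposes of this paper the proposition is quoted rather than reproved; in a self-contained treatment one would assemble them as above, and the most delicate point is keeping the dependence on $d_x$ explicit so that the error term takes the stated form $d_x^{-c}2^{-\delta n}$ rather than merely $o_x(1)$.
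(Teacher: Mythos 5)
This proposition is not proved in the paper at all: it is quoted verbatim from~\cite[Theorem 1.1]{Escape}, as the surrounding text states, so there is no in-paper argument to compare against, and you correctly recognize that the statement is cited rather than reproved. Your sketch is a fair reconstruction of the architecture of the cited proof, and indeed of the machinery this paper itself reuses in Section~\ref{se:onept}: the starting point there (proof of Proposition~\ref{prop:funceqss}) is exactly the decomposition $z(M)=G_{\DD_M}(0,x_M)\,P^{M}_{x_M,x_M}(\LE(X)\cap Y[1,T]=\emptyset)$ coming from the reversibility of LERW and~\cite[Proposition 8.1]{Growth}, i.e.\ Green's function times an LERW--SRW non-intersection probability, followed by separation of starting points, quantitative couplings of the LERW with $\KK$ (Kozma, with polynomial rate in the L\'evy metric) and of the SRW with Brownian motion, and separation-lemma-type decoupling to keep all errors polynomial in $2^{-n}$ and in $d_x$. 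One caveat on your middle paragraph: there is no known ``Brownian non-intersection exponent'' in three dimensions to which the discrete non-intersection probability converges; in~\cite{Escape} the exponent $3-\beta$ enters through the a priori estimate on the escape probability $\Es(m)\asymp m^{\beta-2}$ from~\cite{Growth} combined with $G_{\DD}(0,x_m)\asymp 2^{-n}$, and the sharp constant $c(x)$ is \emph{constructed} as a limit by comparing non-intersection probabilities at consecutive dyadic scales via the coupling with the scaling limit (a ratio/fixed-point argument of the same flavour as the functional-equation step this paper uses to go beyond dyadic scales), not identified with a continuum quantity known in advance. With that phrasing corrected, your outline is consistent with the cited proof.
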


A corollary of this result is the tightness of the total length of LERW. Recall $\mu$ is the (rescaled) occupation measure defined in \eqref{eq:mudef}. Then, for $m>0$,
\begin{equation}
M_{m} \coloneqq \mu_{m}(\DD)
\end{equation}
gives the the total length of $\eta_m$.
\begin{cor}[{\cite[Corollary 1.3]{Escape}}]\label{cor:tight}
The family of random variables $2^{-\beta n} M_{2^n}$, $n\in\Zp$ are tight. 
\end{cor}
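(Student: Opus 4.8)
\textbf{Proof proposal for Corollary~\ref{cor:tight}.}

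The plan is to deduce tightness of the sequence $2^{-\beta n}M_{2^n}$ from the sharp one-point asymptotics of Proposition~\ref{prop:dyadic-onePoint} by a first-moment (Markov inequality) argument. First I would observe that $M_{2^n}=\mu_{2^n}(\DD)=(f_{2^n})^{-1}\#\big(\mathbb{D}_{2^n}\cap\eta_{2^n}\big)$, and since by the dyadic one-point estimates we may take $f_{2^n}=2^{\beta n}$ (as recorded after Theorem~\ref{thm:weakconv}), we have exactly
\[
2^{-\beta n}M_{2^n}=2^{-2\beta n}\sum_{x\in\mathbb{D}_{2^n}}\ind{x\in\eta_{2^n}}.
\]
Wait — more directly, $M_{2^n}=2^{-\beta n}\#(\mathbb{D}_{2^n}\cap\eta_{2^n})$, so $2^{-\beta n}M_{2^n}=2^{-2\beta n}\#(\mathbb{D}_{2^n}\cap\eta_{2^n})$; in any case the point is that $M_{2^n}$ is a constant multiple of the cardinality of the LERW trace, and to control its expectation we need only sum the one-point probabilities over $x\in\mathbb{D}_{2^n}$.

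The key step is then the bound
\[
E\big[2^{-\beta n}M_{2^n}\big]=2^{-\beta n}\sum_{x\in\mathbb{D}_{2^n}}P\big(x\in\eta_{2^n}\big)\asymp 2^{-\beta n}\cdot 2^{-3n}\sum_{x\in\mathbb{D}_{2^n}}C_x 2^{\beta n},
\]
using Proposition~\ref{prop:dyadic-onePoint} (or already the up-to-constant version underlying it) in the form $P(x\in\eta_{2^n})\asymp C_x 2^{(\beta-3)n}$, where $C_x$ is the quantity defined in~\eqref{eq:Cxdef}. The sum $2^{-3n}\sum_{x\in\mathbb{D}_{2^n}}C_x$ is a Riemann sum for $\int_{\DD}C_{x}\,dx$ up to the usual boundary/origin corrections; since $\beta>1$ one checks $C_x$ is integrable near $0$ (the singularity $|x|^{\beta-3}$ is integrable in $\R^3$ because $\beta-3>-3$) and near $\partial\DD$ (the factor $(1-|x|)^{\beta-1}$ is bounded). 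Hence $\sum_{x\in\mathbb{D}_{2^n}}C_x\asymp 2^{3n}$, giving $E[2^{-\beta n}M_{2^n}]\leq C$ uniformly in $n$. Tightness of the nonnegative random variables $2^{-\beta n}M_{2^n}$ then follows from Markov's inequality: $P(2^{-\beta n}M_{2^n}>\lambda)\leq C/\lambda$ for all $n$.

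The main obstacle — really the only subtlety — is the careful treatment of the lattice points near the origin and near $\partial\DD$ when converting $\sum_{x\in\mathbb{D}_{2^n}}C_x$ into $2^{3n}\int_\DD C_x\,dx$: the one-point asymptotics of Proposition~\ref{prop:dyadic-onePoint} carries an error of the form $d_x^{-c}2^{-\delta n}$ which degenerates as $x\to 0$ or $x\to\partial\DD$, so one should either invoke only the cleaner up-to-constant estimate $P(x\in\eta_{2^n})\asymp C_x 2^{(\beta-3)n}$ valid at all $x\in\mathbb{D}_{2^n}$ (which is what is actually needed for an upper bound on the expectation), or partition $\mathbb{D}_{2^n}$ into dyadic annuli around $0$ and around $\partial\DD$ and sum $C_x$ over each annulus separately, checking that the geometric series in the resulting bound converges precisely because $\beta\in(1,5/3]$. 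Either way no lower bound or second-moment input is needed — tightness is purely a first-moment statement — so the argument is short once the summation of $C_x$ over $\mathbb{D}_{2^n}$ is done with care.
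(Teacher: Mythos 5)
Your proposal is correct: tightness is indeed a pure first-moment statement, and summing the uniform up-to-constant one-point bound $P\big(x\in\eta_{2^n}\big)\asymp C_x\,2^{(\beta-3)n}$ over dyadic annuli around $0$ and $\partial\DD$ (where $C_x$ is integrable precisely as you note) gives $E\big[\mathrm{len}(\eta_{2^n})\big]\le C\,2^{\beta n}$, after which Markov's inequality finishes. The paper gives no proof of Corollary~\ref{cor:tight} --- it is quoted from~\cite[Corollary 1.3]{Escape} --- and your route is essentially the argument used there, modulo the harmless normalization point you already flagged (with the paper's definition $M_{2^n}=\mu_{2^n}(\DD)$ the factor $2^{-\beta n}$ is in fact already built into $M_{2^n}$, so the substantive content is the uniform bound on the expected length).
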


We now turn to two-point estimates. Recall the definition of $\cD^o$ in~\eqref{eq:dnodef}. Let $V\in\cD^o$. Write $x_V$ for the center of $V$ and
$C_V=C_{x_V}$ where $C_{x_V}$ is defined in \eqref{eq:Cxdef}.
 For two different points $z,w\in V$, write $d=|z-w|$. 
\begin{prop}\label{prop:uptocb2}
For $0<r,r'<d/100$ such that $B(z,r),B(w,r')\subset V$, there exists some $M_1(V,d,r,r') > 0$ such that for all $m\geq M_1$, we have
\begin{equation}\label{eq:2b}
P\Big(\eta_m \cap B(z,r) \neq \emptyset,\;\eta_m \cap B(w,r') \neq \emptyset\Big)\asymp C_V  \left(\frac{rr'}{\vert z - w\vert}\right)^{3-\beta}
\end{equation} 
and
\begin{equation}\label{eq:1b1p}
P\Big(\eta_m \cap B(z,r) \neq \emptyset,\;w_m\in \eta_m\Big)\asymp C_V  \left(\frac{r}{m \vert z - w\vert}\right)^{3-\beta}.
\end{equation}
Equivalently, with the same assumption as above,
\begin{equation}\label{eq:cb1}
P\Big(\KK \cap B(z,r) \neq \emptyset,\;\KK \cap B(w,r') \neq \emptyset\Big)\asymp C_V  \left(\frac{rr'}{\vert z - w\vert}\right)^{3-\beta}.
\end{equation}
\end{prop}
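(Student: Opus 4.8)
The plan is to establish the continuum estimate~\eqref{eq:cb1} first, and then to deduce the two discrete estimates~\eqref{eq:2b} and~\eqref{eq:1b1p} from it. The transfer from~\eqref{eq:cb1} to~\eqref{eq:2b} is where Proposition~\ref{prop:Hanyscale} enters: with the configuration $(z,w,r,r')$ fixed, choose $M_1$ so large that $Cm^{-c}<\varepsilon$ for some $\varepsilon$ much smaller than $\min\{r,r',d\}$; then for $m\ge M_1$ the probability $P(\eta_m\cap B(z,r)\neq\emptyset,\ \eta_m\cap B(w,r')\neq\emptyset)$ is trapped, up to an additive $O(\varepsilon)$, between the corresponding probabilities for $\KK$ with the slightly perturbed radii $r\mp\varepsilon$, $r'\mp\varepsilon$; since~\eqref{eq:cb1} is an up-to-constant bound, stable under such small relative perturbations of the radii, \eqref{eq:2b} follows. (One may also invoke the continuity of ball-hitting probabilities, Proposition~\ref{prop:continuity}.) For~\eqref{eq:1b1p} one additionally uses the standard local fact that, up to universal multiplicative constants, the event $\{w_m\in\eta_m\}$ is comparable to $\{\eta_m\cap B(w,C_0m^{-1})\neq\emptyset\}$ jointly with any event depending on $\eta_m$ only away from a fixed mesoscopic neighbourhood of $w$; this reduces~\eqref{eq:1b1p} to~\eqref{eq:2b} with $r'$ of order $m^{-1}$.

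The heart of the matter is therefore~\eqref{eq:cb1}, whose mechanism is the quasi-multiplicativity of $\KK$ across the three relevant scales: the ambient scale $d_z\asymp d_w\asymp d_{x_V}$, the separation scale $d=|z-w|$, and the target scales $r,r'$. The event that $\KK$ meets both $B(z,r)$ and $B(w,r')$ should factorize, up to universal constants, as the product of (i) the cost of reaching a neighbourhood of order $d$ about the pair $\{z,w\}$, which is of order $C_V d^{3-\beta}$ by Proposition~\ref{prop:uptocb1}; (ii) the cost of then descending from scale $d$ into $B(z,r)$, of order $(r/d)^{3-\beta}$; and (iii) the analogous cost of descending from scale $d$ into $B(w,r')$, of order $(r'/d)^{3-\beta}$. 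Since $B(z,d/10)$ and $B(w,d/10)$ are disjoint, (ii) and (iii) decouple up to constants, and multiplying the three contributions gives $C_V d^{3-\beta}(r/d)^{3-\beta}(r'/d)^{3-\beta}=C_V(rr'/d)^{3-\beta}$, the claimed order. Concretely I would obtain the upper bound by splitting over which of the two balls the curve visits first, performing a first-entrance/last-exit decomposition of the underlying SRW at the dyadic annuli surrounding $z$ and $w$, and iterating the one-point upper bounds of Proposition~\ref{prop:uptocb1} via the domain Markov property of LERW (equivalently, Wilson's algorithm); for the lower bound I would build an explicit SRW trajectory whose loop-erasure threads both balls---run the walk to a well-separated point at scale $d$ near $z$, then into $B(z,r)$, then across to $B(w,r')$ while avoiding the already-erased portion, then out to $\partial\DD$---and glue these excursions using the separation lemmas for 3D LERW.

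A shorter route, appropriate for what is after all a ``useful facts'' lemma, is to quote the dyadic-scale analogues of~\eqref{eq:2b} and~\eqref{eq:1b1p} from the second-moment analysis of the occupation measure in~\cite{Natural}, combine them with the scaling invariance of $\KK$ (see~\eqref{eq:scaleInvariance}--\eqref{eq:convrdya}) to obtain~\eqref{eq:cb1} for $\KK$, and then use Proposition~\ref{prop:Hanyscale} to pass to arbitrary $m\ge M_1$ as described above.

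The main obstacle is the lower bound in~\eqref{eq:cb1}, and specifically the separation/quasi-multiplicativity input needed to concatenate the excursions with only a constant-factor loss: one must control, uniformly in the configuration, the conditional probability that an SRW started near $z$ reaches $B(w,r')$ without meeting the already loop-erased path, i.e.\ the relevant escape probabilities for 3D LERW. These are the technically heaviest ingredients---precisely the machinery developed in~\cite{Escape,Natural}---but once granted, the up-to-constant two-point bounds amount to routine bookkeeping across the three scales.
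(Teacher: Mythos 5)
Your overall mechanism is the right one, and in fact it is what the paper implicitly relies on: the proof is omitted here and referred to Propositions 4.2 and 4.3 of~\cite{Natural}, which are discrete statements proved by exactly the kind of three-scale path decomposition, lattice-scale one-point estimates and separation-lemma gluing that you sketch; the continuum statement~\eqref{eq:cb1} is then the one obtained by weak convergence, not the other way around. Your continuum-first presentation of~\eqref{eq:2b} is nevertheless workable for a fixed configuration $(z,w,r,r')$, since $M_1$ is allowed to depend on everything: note only that the L\'evy metric as defined in~\eqref{eq:Levy} controls hitting probabilities of a \emph{single} closed set, so for the joint two-ball event you really need the full weak convergence (portmanteau with the closed/open ball events) together with Proposition~\ref{prop:continuity} to kill the boundary contribution --- which you do gesture at --- and your ``shorter route'' (dyadic-scale discrete estimates from~\cite{Natural}, pass to $\KK$, transfer back via Proposition~\ref{prop:Hanyscale}) is a legitimate, if slightly roundabout, way to organize the same input.

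The genuine gap is your derivation of~\eqref{eq:1b1p}. Reducing it to~\eqref{eq:2b} with $r'\asymp m^{-1}$ is circular: \eqref{eq:2b} is only asserted for $m\geq M_1(V,d,r,r')$ with $r'$ \emph{fixed}, so you cannot substitute $r'=C_0m^{-1}$ and let $m\to\infty$; and more fundamentally, no continuum statement about $\KK$ can see the lattice-scale event $\{w_m\in\eta_m\}$ --- the weak-convergence (or L\'evy-metric) transfer loses all control at scales comparable to $m^{-1}$, which is precisely why the $m^{-(3-\beta)}$ factor in~\eqref{eq:1b1p} must be produced by a discrete argument. The ``standard local fact'' you invoke (comparability of $\{w_m\in\eta_m\}$ with $\{\eta_m\cap B(w,C_0m^{-1})\neq\emptyset\}$ \emph{jointly} with an arbitrary event supported away from $w$) is not an off-the-shelf fact: it is itself a lattice-scale decoupling/quasi-multiplicativity statement of essentially the same depth as~\eqref{eq:1b1p}, of the type of Proposition~\ref{prop:decoup}, and proving it requires the same path-decomposition, one-point estimates from~\cite{Growth,Escape} and separation lemmas that a direct proof of~\eqref{eq:1b1p} uses. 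So for~\eqref{eq:1b1p} you should argue directly in the discrete, as in the proofs of Propositions 4.2--4.3 of~\cite{Natural}: decompose the walk and its loop-erasure at the scales between $m^{-1}$ and $|z-w|$ around $w$, attach the ball $B(z,r)$ by the same mechanism, and glue with only constant-factor loss; the continuum plays no role in that estimate.
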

We omit the proof as it is essentially the same as those of Propositions 5.2 and 5.3 of~\cite{Natural}. Note that the independence of $M_1$ on $z$ and $w$ follows from the equivalence of the convergence under L\'evy metric and the weak convergence under $d_{\text{Haus}}$-metric.

\subsection{Asymptotic independence for 3D LERW}

In this subsection, we will briefly review the asymptotic independence of three-dimensional LERW in the form of a decoupling result, which serves as the core argument in \cite{Natural}. In essence, this result states that with a small error, the ratio of the hitting probability of two sets that are not too close to each other and the hitting probability of two points roughly the same location can be decoupled into the product of the ratio of  hitting probabilities of the respective sets at a reference location and the hitting probability of a point at this location.

We now state it more precisely.  Recall the definition of $\cD^o_k$ below \eqref{eq:dnodef}. Let $V\in\cD^o_k$ for some $k\in \Z^+$. Let $\hat{x}=(1/2,0,0)$ be a reference point. For $v\in\Rt$ and a set $S\subset \Rt$, write $S(v)\coloneqq v+S=\{v+x; x\in S\}$ for $S$ shifted by $v$.

\begin{prop}\label{prop:decoup}
For $k\in\Z^+$, consider $m \in \mathbb{R}^+$ such that $m\geq 2^{k^4}$ and
\begin{equation} \label{eq:anSdef} 
 a_{m}(S)=\frac{P\big(S(\hat{x})\cap \eta_{m}\neq \emptyset\big)}{P\big(\hat{x}\in\eta_{m}\big)}, 
\end{equation}
for any $S \subset m^{-1} \mathbb{Z}^3 $.
Let $ z, w \in V \cap m^{-1}\Z^3$ such that $|z-w|\geq 2^{-k^2}$. 
Let 
\[
  S_1,S_2\subset \left[-2^{-k^4},2^{-k^4}\right]^3\bigcap m^{-1} \Z^3.
\]
Then,
\begin{equation}\label{eq:ppbb}
\frac{P\Big(S_1(z)\cap \eta_{m}\neq \emptyset,\; S_2(w) \cap\eta_{m}\neq \emptyset\Big)}{P\big(z,w\in \eta_{m}\big)} =\Big[  1+O\big(2^{-k^2}\big) \Big] a_{m}(S_1)a_{m}(S_2).
\end{equation}
Similarly,
\begin{equation}\label{eq:bpbb}
\frac{P\Big(S_1(z)\cap \eta_{m}\neq \emptyset,\; w\in \eta_{m}\Big)}{P\big(z,w\in \eta_{m}\big)} =\Big[  1+O\big(2^{-k^2}\big)\Big] a_{m}(S_1).
\end{equation}
\end{prop}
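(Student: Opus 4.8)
\textbf{Proof plan for Proposition~\ref{prop:decoup}.}
The statement is quoted from~\cite{Natural} (it is the core decoupling/asymptotic-independence result there), so strictly speaking the plan is to invoke that reference. Nonetheless, let me sketch how the argument goes, since it clarifies which parts are genuinely needed downstream. The strategy is a two-scale (or multi-scale) decomposition of the loop-erased walk: one separates the ``macroscopic'' behavior of $\eta_m$ as it travels across the ball $\mathbb{D}$ from the ``microscopic'' behavior of $\eta_m$ near the windows $S_1(z)$ and $S_2(w)$. Since $|z-w|\geq 2^{-k^2}$ while the windows have diameter $\asymp 2^{-k^4}\ll 2^{-k^2}$, there is a comfortable gap of scales, and the point is that conditioned on $\eta_m$ entering the neighborhoods $B(z,2^{-k^3})$ and $B(w,2^{-k^3})$, the local pieces of $\eta_m$ inside these neighborhoods look (up to $1+O(2^{-k^2})$) like independent, shift-invariant objects whose law does not depend on $z$ or $w$ but only on the common mesh $m$; the ratio defining $a_m(S)$ in~\eqref{eq:anSdef} captures exactly this local law at the reference location $\hat x$.

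First I would set up the decomposition of the simple random walk $S_m$ whose loop-erasure is $\eta_m$: run $S_m$ until it enters $B(z,2^{-k^3})$, then record the local excursion structure inside $B(z,2^{-k^3})$, then the passage to $B(w,2^{-k^3})$, and so on, using the domain Markov property of LERW together with Wilson's algorithm / the loop-soup representation to control how loop-erasure interacts across scales. The key technical inputs are: (i) separation lemmas ensuring that with probability $1-O(2^{-k^2})$ the relevant pieces of the walk stay well-separated (so that loops formed at the macroscopic scale do not reach into the windows and vice versa); (ii) a coupling of the conditioned local piece of $\eta_m$ near $z$ (resp.\ $w$) with the corresponding local piece near $\hat x$, with total-variation error $O(2^{-k^2})$ — this is where one uses that all three neighborhoods have the same mesh $m$ and that LERW restricted to a mesoscopic ball, conditioned on where it enters and exits, has a law that is continuous in the ball's location up to the stated error; (iii) the up-to-constant one- and two-point estimates of Subsection~\ref{section:oneandtwo} (Propositions~\ref{prop:uptocb1} and~\ref{prop:uptocb2}) to guarantee that the conditioning events have comparable, non-degenerate probabilities so the multiplicative errors stay multiplicative. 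Combining (i)–(iii), the four-point quantity $P(S_1(z)\cap\eta_m\neq\emptyset,\,S_2(w)\cap\eta_m\neq\emptyset)$ factors as $P(z,w\in\eta_m)$ times the two local ratios $a_m(S_1)a_m(S_2)$, up to $1+O(2^{-k^2})$; the proof of~\eqref{eq:bpbb} is the same with $S_2$ replaced by a single point.

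The main obstacle is input (ii), the coupling of the conditioned local LERW pieces: loop-erasure is a global operation, so one cannot simply restrict $\eta_m$ to a ball and expect a Markovian description — the erased loops can be large. The standard remedy (carried out in~\cite{Natural}, building on the escape-probability machinery of~\cite{Escape} and earlier work of Shiraishi) is to express the relevant events through non-intersection probabilities of a forward LERW piece with an independent reversed simple random walk (or with the ``future'' of the walk), and then to show that these non-intersection probabilities, suitably normalized, converge at a polynomial rate and are insensitive to the macroscopic geometry once the separation of scales $2^{-k^4}\ll 2^{-k^3}\ll 2^{-k^2}$ is in force. Making the error genuinely of the form $1+O(2^{-k^2})$ — rather than, say, $o(1)$ — requires quantitative versions of all the separation and coupling lemmas; this quantitative bookkeeping is the technical heart of~\cite{Natural}, and here I would simply cite it, since the present paper uses Proposition~\ref{prop:decoup} as a black box (see the error terms in~\eqref{eq:ppbb} and~\eqref{eq:bpbb} referenced in Remark~\ref{rem:modiMC}).
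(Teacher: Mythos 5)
Your proposal matches the paper's treatment: the paper likewise gives no self-contained argument and simply notes that the proposition follows from a minor modification of the proofs of (5.6), (5.7) and (5.11) in \cite{Natural} (where $S_1,S_2$ were restricted to certain boxes, and with the observation that the argument also covers non-dyadic scales $m$), which is exactly the black-box citation you make. Your sketch of the underlying two-scale decoupling and coupling machinery is consistent with that reference, so the approach is essentially the same; just be aware that the statement here is a slight extension of, not a verbatim quote from, \cite{Natural}.
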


 This proposition follows from a minor modification of the proof for (6.6) and (6.7) of \cite{Natural} where $S_1$ and $S_2$ are restricted to certain boxes. 
See also (6.8) and (6.9), ibid. Note that this decoupling result holds for non-dyadic scales too. 

\section{A continuity result}\label{se:continuity}
In this section, we will present the following property of $\KK$, the scaling limit of 3D LERW:  a.s., $\KK$ cannot hit the boundary of a ball but do not enter it. This property will be quite helpful in Section \ref{se:proof} as it allows us to switch between the discrete and continuum when it comes to hitting probability estimates. 
\begin{prop}\label{prop:continuity}
For any $x\in\mathbb{D}$ and $r>0$ such that $B(x,r)\subset\mathbb{D}$, 
$$
P\Big(\KK \cap B(x,r)\neq\emptyset\mbox{ but }\KK \cap B^\circ(x,r)=\emptyset\Big)=0.
$$
\end{prop}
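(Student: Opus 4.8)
The plan is to show that the event in question has probability zero by a union bound over a countable family of concentric annuli together with a scaling/iteration argument that forces the probability of each ``hit the sphere but miss the open ball'' event to vanish. Fix $x$ and $r$ with $B(x,r)\subset\mathbb{D}$. For $\varepsilon>0$ consider the annular shell $A_\varepsilon = \overline{B(x,r)}\setminus B^\circ(x,r-\varepsilon)$; the bad event is contained, for every $\varepsilon$, in
\[
E_\varepsilon = \big\{\,\KK\cap \partial B(x,r)\neq\emptyset,\ \KK\cap B^\circ(x,r)=\emptyset\,\big\}\subset \big\{\,\KK\cap A_\varepsilon\neq\emptyset,\ \KK\cap B^\circ(x,r-\varepsilon)=\emptyset\,\big\}.
\]
Since $\KK$ is an increasing limit in $\varepsilon$ of the complement of the second event, it suffices to prove $P(E_\varepsilon)\to 0$, and for this I would bound $P(E_\varepsilon)$ by the probability that $\KK$ enters the thin shell $A_\varepsilon$ at all. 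Covering $\partial B(x,r)$ by $O(\varepsilon^{-2})$ balls of radius $c\varepsilon$, the up-to-constant one-point (ball-hitting) estimate of Proposition~\ref{prop:uptocb1} gives $P(\KK\cap B(y,c\varepsilon)\neq\emptyset)\asymp C_y\,\varepsilon^{3-\beta}$, hence by a union bound $P(\KK\cap A_\varepsilon\neq\emptyset)\lesssim \varepsilon^{-2}\cdot\varepsilon^{3-\beta} = \varepsilon^{1-\beta}$, which unfortunately \emph{blows up} since $\beta>1$. So the naive union bound is not enough, and this is exactly the main obstacle: one must exploit that $\KK$ is a simple curve from $0$ to $\partial\mathbb{D}$, so it cannot merely touch $\partial B(x,r)$ from inside without the excursion structure being visible at a coarser scale.

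The remedy I would pursue is a two-scale (or multi-scale) argument in the spirit of Beurling-type estimates / excursion decompositions: condition on the first time the curve $\KK$ (parametrized from $0$) reaches distance $r-\delta$ from $x$ for a mesoscopic $\delta\gg\varepsilon$, and on that reaching point $\zeta$. Because $\KK$ must eventually leave $\mathbb{D}$ and hence eventually exit $B(x,r)$, on the event $E_\varepsilon$ the curve, after reaching $\zeta$, has to travel from the sphere $\partial B(x,r-\delta)$ out to $\partial B(x,r)$, touch it, and come back inside $B^\circ(x,r)$ — or else wander off without ever entering $B^\circ(x,r)$. The key point is that a simple curve which touches $\partial B(x,r)$ but stays in $\overline{B(x,r)}$ near that touching point must, within the annulus $B(x,r)\setminus \overline{B(x,r-\delta)}$, make an excursion that gets within $\varepsilon$ of the sphere and returns to depth $\delta$; using the two-point estimate of Proposition~\ref{prop:uptocb2} (with the two reference balls being a $c\delta$-ball near $\zeta$ and a $c\varepsilon$-ball near the touching point) one gets that, conditionally on reaching depth $\delta$ near a given location, the probability of also getting within $\varepsilon$ of the sphere there is $O((\varepsilon/\delta)^{3-\beta})$. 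Summing over an $O(\delta^{-2})$-cover of $\partial B(x,r)$ and over the $O(\delta^{-2})$ possible mesoscopic entry cells — but now each factor contributes only once because the curve is simple and makes boundedly many deep excursions in expectation — one obtains $P(E_\varepsilon)\lesssim \delta^{-2}(\varepsilon/\delta)^{3-\beta}$ uniformly; then first sending $\varepsilon\to 0$ with $\delta$ fixed kills it.

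Making ``the curve is simple so it makes boundedly many deep excursions'' quantitative is where the real work lies, and I would handle it exactly as such estimates are handled in \cite{Escape} and \cite{Natural}: pass to the discrete LERW $\eta_m$, where an excursion of $\LE(S_m)$ across a dyadic annulus corresponds to a piece of the loop-erased path, and use the standard fact (already invoked in the up-to-constant estimates) that the number of crossings of a fixed annulus by the LERW has exponential tails; combined with Proposition~\ref{prop:Hanyscale} (quantitative convergence in the L\'evy metric) this transfers back to $\KK$. Concretely: prove the discrete analogue $P(\eta_m\cap A_\varepsilon\neq\emptyset,\ \eta_m\cap B^\circ(x,r-\varepsilon)=\emptyset)\leq C(\varepsilon/\delta)^{3-\beta}+o_m(1)$ by decomposing according to the (geometrically bounded) number of times $\eta_m$ crosses the annulus $B(x,r)\setminus B(x,r-\delta)$ and applying the two-point bound to the deepest such crossing, then let $m\to\infty$ using the L\'evy-metric convergence to close a ball's boundary into its closure, then let $\varepsilon\to0$ and finally $\delta\to0$. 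The only delicate points are (i) the boundary-regularity step turning ``$\eta_m$ within $\varepsilon$ of $\partial B(x,r)$'' into ``$\KK$ hits $\partial B(x,r)$'', which is a routine consequence of the Portmanteau-type statement implicit in Proposition~\ref{prop:Hanyscale}, and (ii) verifying that the implied constants in Proposition~\ref{prop:uptocb2} are locally uniform in the base points as those approach $\partial B(x,r)$, which holds because $d_z,d_w$ stay bounded below as long as $B(x,r)\subset\mathbb{D}$ with room to spare.
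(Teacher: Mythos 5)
Your reduction (bound $P\big(\KK\cap A_\varepsilon\neq\emptyset,\ \KK\cap B^\circ(x,r-\varepsilon)=\emptyset\big)$ and let $\varepsilon\to0$) is fine and matches the paper's starting point, but the core quantitative step is not sound. On the event you need to control, $\KK$ never enters $B^\circ(x,r)$ at all, hence never reaches $\partial B(x,r-\delta)$; so there is no ``first reaching point $\zeta$ at depth $\delta$'' to condition on, and the excursion you describe (travel from $\partial B(x,r-\delta)$ out to $\partial B(x,r)$ and back) simply does not occur on the bad event. The branch you dismiss as ``or else wander off without ever entering $B^\circ(x,r)$'' \emph{is} the whole event, and your decomposition gives no bound for it. Even read charitably as a two-scale union bound, the mechanism fails: pairing a $c\varepsilon$-ball at the touching point with a $c\delta$-ball at distance $\asymp\delta$ via Proposition~\ref{prop:uptocb2} gives $\big(\delta\cdot\varepsilon/\delta\big)^{3-\beta}=\varepsilon^{3-\beta}$ per pair, no better than the one-point bound, while covering the two-dimensional sphere at scale $\varepsilon$ costs $\varepsilon^{-2}$ terms; since $3-\beta<2$ the sum still diverges, exactly as in the naive bound you correctly rejected. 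The asserted estimate $P(E_\varepsilon)\lesssim \delta^{-2}(\varepsilon/\delta)^{3-\beta}$ is therefore not derived, and the appeal to ``the curve is simple, so boundedly many deep excursions'' does not help, because the obstruction is surface entropy at scale $\varepsilon$, not excursion multiplicity.

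The paper's proof uses a genuinely different mechanism, which you would need (or an equivalent cancellation-type input) to close the argument. A union bound is used only along the one-dimensional ``equator'' $\ell$ of $\partial B$, where covering by $\asymp r/\zeta$ balls of radius $\zeta$ gives $Cr\zeta^{2-\beta}\to0$ precisely because $\beta<2$. Away from the equator, the key trick is a shift-and-couple comparison: if $\eta$ touches $B$ on a hemisphere $F$ without entering the slightly smaller ball $\widehat B_a=B(x,r-\varepsilon^a)$, then the shifted curve $\widetilde\eta=\eta+(0,0,\varepsilon^b)$ must hit $\widehat B_a$; the cut-time coupling from \cite{Natural} shows the shift changes the relevant hitting probabilities by only $C\varepsilon^{\delta b}$, and subtracting the two hitting probabilities (rather than union-bounding) yields $P(\eta\cap B\neq\emptyset,\ \eta\cap\widehat B_a=\emptyset)\le C\varepsilon^{cb}$ uniformly in the mesh, which is then transferred to $\KK$ by the Hausdorff-coupling. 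Your step (i), transferring discrete estimates to $\KK$ via Proposition~\ref{prop:Hanyscale}, is the unproblematic part; what is missing is the discrete estimate itself, and the tools you invoke (Propositions~\ref{prop:uptocb1} and~\ref{prop:uptocb2} plus crossing-number tails) cannot produce it.
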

As this result similar to Proposition 7.2 of \cite{Natural}, where the authors consider boxes instead of balls, we will only sketch the proof of Proposition \ref{prop:continuity} here.

\begin{figure}[h] 
\centering
\includegraphics{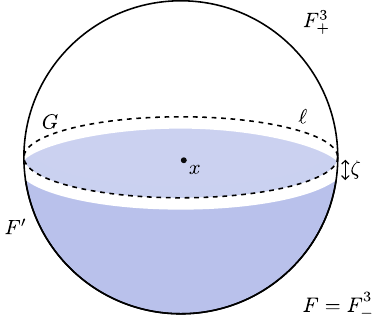}
\caption{ The equator $\ell$ divides $\partial B$ into two shells: the northern hemisphere $F_{+}^3$ and the southern one $ F = F_{-}^3 $.
The shaded area represents the set $F' \subset \partial F$, consisting of points $x \in F$ in the southern hemisphere that are at a distance greater than $ \zeta =  \varepsilon^{b/20}$ from the equatorial line $\ell$. 
}  \label{fig:sphere}
\end{figure}

\begin{proof}[Proof sketch of Proposition~\ref{prop:continuity}]
Write $x = (x_1, x_2, x_3) \in \mathbb{D}$ and $B = B (x, r)$ for short. We may assume that $m \coloneqq \text{dist} \big( B, \{ 0 \} \cup \partial \mathbb{D} \big) > 0$. Finally, we fix $0<\varepsilon <10^{-5} r$.

Let $\eta = \eta_{2^n}$ be the LERW on $\mathbb{D}_{2^n}$ from the origin. For $a > b \ge 10^{6}$, we set 
\begin{equation*}
\widehat{B} = \widehat{B}_{a} = B \big( x, r - \varepsilon^{a} \big) \ \  \ \ \ \text{ and } \ \ \ \ \  \widetilde{B} = \widetilde{B} _{b} = B \Big( x + \big( 0,0, \varepsilon^{b}  \big), r \Big).
\end{equation*}

We next consider six subsets $F^{i}_{+}, F^{i}_{-}$ ($i=1,2,3$) of $\partial B$ defined by
\begin{equation*}
F^{i}_{+} = \big\{ (y_1, y_2, y_3 ) \in  \partial B \ \big| \ y_i \ge x_i \big\} \ \  \ \ \ \text{ and } \ \ \ \ \ F^{i}_{-} = \big\{ (y_1, y_2, y_3 ) \in  \partial B \ \big| \ y_i \le x_i \big\}.
\end{equation*}
Note that $\partial B = \bigcup_{i =1}^{3}  F^{i}_{+}  \cup F^{i}_{-}$. Let $\tau $ be the first time that $\eta$ hits $B$. Then we have 
\begin{equation}\label{darui}
\begin{split}
P \big( \eta \cap B \neq \emptyset \text{ and } \eta \cap \widehat{B} = \emptyset \big) 
&\;\le \sum_{i=1}^{3} P \big( \eta \cap B \neq \emptyset, \ \eta \cap \widehat{B} = \emptyset, \ \eta (\tau) \in F^{i}_{+} \big)\\ +  \sum_{i=1}^{3} &\;P \big( \eta \cap B \neq \emptyset, \ \eta \cap \widehat{B} = \emptyset, \ \eta (\tau) \in F^{i}_{-} \big).
\end{split}
\end{equation}
We now bound the RHS of \eqref{darui}. By symmetry, we can deal with the case of 
$F = F^{3}_{-}$ only. See Figure~\ref{fig:sphere} for reference. Let
\[
\ell = \big\{ y = (y_1, y_2, y_3) \in \partial B \ \big| y_3 = x_3 \big\}
\]
be the ``equator'' of $B$. It is not difficult to show that one can choose $a > b \ge 10^{6}$ such that the following holds:
\begin{itemize}
\item[(i)] If we define 
\begin{equation*}
F' = \big\{ y \in F \ \big| \ \text{dist} (y, \ell ) \ge \varepsilon^{b /20} \big\},
\end{equation*}
then for all $y  \in F'$ it holds that 
\begin{equation}\label{darui-1}
y + (0,0, \varepsilon^{b} ) \in \widehat{B}_{a}.
\end{equation}
We write 
\begin{equation*}
F'' = \big\{ y  + (0,0, \varepsilon^{b} ) \ \big| \ y \in F' \big\} \subset \widetilde{B}_{b}
\end{equation*}
which is contained in the ``southern hemisphere'' of $\widetilde{B}_{b}$.

\item[(ii)] For any continuous simple curve $\lambda = \lambda [0, u]$ satisfying that $\lambda (0) = 0$, $\lambda (u) \in F''$  and $u$ is the first time that $\lambda$ hits $\widetilde{B}_{b}$, it follows that $\lambda$ must hit $\widehat{B}_{a}$ (in particular, it hits $B$). If we write $t$ for the first time that $\lambda$ hits $B$, then it holds that 
\begin{equation}\label{darui-2}
\lambda (t) \in F \cup \big\{ y \in \partial B \ \big| \ \text{dist} (y, \ell ) \le \varepsilon^{b/5} \big\}.
\end{equation}

\end{itemize}

We fix $a$ and $b$ which satisfy the conditions (i) and (ii) above. Let $\zeta = \varepsilon^{b/20}$ and define
\begin{equation*}
G = \big\{ y \in \partial B \ \big| \ \text{dist} (y, \ell ) \le \zeta \big\}.
\end{equation*}
We now calculate the probability that $\eta$ hits $G$. 
Let $\{ y_j \}_{j=1}^{J} \subset \ell$ be a set of points such that \begin{equation*}
G \subset \bigcup_{j=1}^{J}  B (y_{j} , 2 \zeta ) \ \ \ \ \ \text{ and } \ \ \ \ \ J \asymp r/\zeta.
\end{equation*}  
By \eqref{eq:oneballd},
$P \big( \eta \cap B (y_{j} , 2 \zeta ) \neq \emptyset \big) \le C \zeta^{3 - \beta } 
$
for each $j$. 
Since $2- \beta > 0$, we see that 
\begin{equation}\label{darui-3}
P \big( \eta \cap G \neq \emptyset \big) \le \sum_{j=1}^{J} P \big( \eta \cap B (y_{j} , 2 \zeta ) \neq \emptyset \big) \le C r \zeta^{2 - \beta } \to 0,  \  \text{ as } \varepsilon \to 0.
\end{equation}

Once we have \eqref{darui-3}, we can follow the same scenario as in the proof of \cite[Proposition 7.2]{Natural}. The steps are as follows.

\begin{itemize}
\item[Step 1.] We let $\widetilde{\eta}= \eta + (0,0, \varepsilon^{b})$ and write $\widetilde{\tau}$ for the first time that $\widetilde{\eta}$ hits $B$. The conditions (i) and (ii) ensure that 
\begin{equation*}
\eta \cap B \neq \emptyset, \ \eta (\tau) \in F \text{ and } \eta \cap G = \emptyset   \Rightarrow \widetilde{\eta} \cap \widehat{B}_{a} \neq \emptyset \text{ and } \widetilde{\eta} (\widetilde{\tau} ) \in F.
\end{equation*} 
This implies
\begin{equation}\label{darui-4}
P \big( \widetilde{\eta} \cap \widehat{B}_{a} \neq \emptyset, \ \widetilde{\eta} (\widetilde{\tau} ) \in F \big) \ge P \big( \eta \cap B \neq \emptyset, \ \eta (\tau) \in F  \big) - C r \zeta^{2 - \beta}.
\end{equation}

\item[Step 2.] The coupling argument via a cut time to prove  \cite[Equation (7.10)]{Natural} still works here. The same argument shows that there exists $\delta > 0$ such that 
\begin{equation}\label{darui-5}
\Big| P \big( \widetilde{\eta} \cap \widehat{B}_{a} \neq \emptyset, \ \widetilde{\eta} (\widetilde{\tau} ) \in F \big)  - P \big( \eta \cap \widehat{B}_{a} \neq \emptyset, \ \eta ( \tau ) \in F \big) \Big| \le C \varepsilon^{\delta b }.
\end{equation}

\item[Step 3.] By \eqref{darui-4}, we have 
\begin{align*}
P \big( & \eta \cap \widehat{B}_{a} \neq \emptyset, \ \eta (\tau ) \in F \big)\; \\ = \; & P \big( \eta \cap B \neq \emptyset, \ \eta (\tau ) \in F \big) - P \big( \eta \cap B \neq \emptyset, \ \eta (\tau ) \in F, \ \eta \cap \widehat{B}_{a} = \emptyset \big) \\
\le \;& P \big( \widetilde{\eta} \cap \widehat{B}_{a} \neq \emptyset, \ \widetilde{\eta} (\widetilde{\tau} ) \in F \big) + C r \zeta^{2 - \beta}  - P \big( \eta \cap B \neq \emptyset, \ \eta (\tau ) \in F, \ \eta \cap \widehat{B}_{a} = \emptyset \big).
\end{align*}
\end{itemize}
It follows from \eqref{darui-5} that 
\begin{align*}
P \big(& \eta \cap B \neq \emptyset, \ \eta (\tau ) \in F, \ \eta \cap \widehat{B}_{a} = \emptyset \big) \; \\
\le\; &P \big( \widetilde{\eta} \cap \widehat{B}_{a} \neq \emptyset, \ \widetilde{\eta} (\widetilde{\tau} ) \in F \big) - P \big( \eta \cap \widehat{B}_{a} \neq \emptyset, \ \eta (\tau ) \in F \big) + C r \zeta^{2 - \beta} \\
\le\;& C \varepsilon^{\delta b } + C r \zeta^{2 - \beta}.
\end{align*}
Combining this with \eqref{darui}, we see that 
\begin{equation}\label{darui-6}
P \big( \eta \cap B \neq \emptyset, \ \eta \cap \widehat{B}_{a} = \emptyset \big) \le C \varepsilon^{c b }
\end{equation}
for some $c, C \in (0, \infty )$. Since we can couple $\eta = \eta_{2^n}$ and ${\cal K}$ in the same probability space such that the Hausdorff distance between them converges to $0$ as $n \to \infty$, it follows from \eqref{darui-6} that 
\begin{equation}\label{darui-7}
P \big( {\cal K} \cap B \neq \emptyset, \ {\cal K} \cap \widehat{B}_{a} = \emptyset \big) \le C \varepsilon^{c b }.
\end{equation}
This finishes the proof.
\end{proof}

\section{Sharp one-point function estimates}\label{se:onept}
In this section, we will give proofs for Theorems \ref{thm:oneptnew} and \ref{thm:oneballnew}, the sharp asymptotics of one-point function for 3D LERW. This section is organized as follows: in Subsection \ref{sec:4.1} we prove Theorem \ref{thm:oneptnew} and then mention briefly how to adapt the proof to derive Theorem \ref{thm:esnew}, the asymptotics of non-intersection probabilities of LERW and SRW in arbitrary scales. In Subsection \ref{sec:4.2} we derive Theorem \ref{thm:oneballnew}.

\subsection{Proof of Theorem \ref{thm:oneptnew}}\label{sec:4.1}
We first state a result on functional equations.
\begin{prop}\label{prop:funceq}
Let $f:\mathbb{R}\to\mathbb{R}^+$ be a function such that
\begin{itemize}
 \item (Boundary value) there exist some $A$ and $B$ such that
\begin{equation}\label{eq:trivial}
f(0)=A,\quad f(1)=AB.
\end{equation}
\item (Local logarithmic additivity)
  For all $r, s \ge 0$ satisfying $\lfloor r \rfloor = \lfloor s \rfloor$, 
\begin{equation}\label{eq:funceqlin}
f(r)f(s)=f(r+s)f(0).
\end{equation}
 \item  (Comparability) There exists $C\in(0,\infty)$ such that for all $|r-s|\leq 1$,
\begin{equation}\label{eq:comparability}
\left|{f(r)}/{f(s)}\right| \in \left(\frac{1}{C},C\right).
\end{equation}
\end{itemize}
Then, for all $r>0$,
\begin{equation}\label{eq:fcteqres}
    f(r)=AB^{r}.
\end{equation}
\end{prop}
\begin{proof}
    By an induction argument, it is straightforward to check that~\eqref{eq:trivial} and~\eqref{eq:funceqlin} imply that~\eqref{eq:fcteqres} holds for all integers of the form $r = 2^{n} >0$. If for some $r_0\in (0,1]$ the identity~\eqref{eq:fcteqres} does not hold,
    then by iterating~\eqref{eq:funceqlin} one can find a large integer $2^M$ such that $f(2^Mr_0)/f(\lfloor 2^Mr_0\rfloor)\notin (1/C,C)$, contradicting \eqref{eq:comparability}!
    It follows that~\eqref{eq:fcteqres} holds for all $r\in (0,1]$.
   
    From here, we extend~\eqref{eq:fcteqres} to all positive $r = \cup_{m \in \mathbb{N}} (0, m]$ by induction on $m$. Assuming that~\eqref{eq:fcteqres} holds for all $r \in (0,m]$, let $t \in (m,m+1]$. Applying \eqref{eq:funceqlin} to the decomposition $t = t/2 + t/2$, we get that $t$ also satisfies~\eqref{eq:fcteqres}.
\end{proof}

We now turn back to one-point functions.

Our first tool is a decomposition of the one-point probability into a Green's function and a certain non-intersection probability. Let $X$ be a simple random walk started from $x_{m}$ conditioned to hit $0$ before exiting $\mathbb{D}_{m}$, and let $Y$  be a simple random walk started from $y_{m}$ stopped at exiting $\mathbb{D}_{m}$ (we denote this stopping time by $T$). We write $P_{x, y}^{m}$ for the joint law of $(X,Y)$.
(Note that we omit the scaling for the starting points $x_{m}$ and $y_{m}$, to lighten the notation).
From~\cite[Lemma 5.1]{Escape} we have that
\begin{equation} \label{eq:decompGreen} 
 P(x_{m}\in \eta_{m}) = G_{\DD_{m}}(0,x_{m}) P^{m}_{x,x}(\LE(X) \cap Y[1,T] =\emptyset) .
\end{equation}
This last identity follows from~\cite[Lemma 5.1]{Escape}. A version of this decomposition was proved for homogeneous Markov chains in~\cite[Proposition 5.2]{BarlowMasson}.

For the rest of this subsection, fix $x\in \Dmo$ and for any $r\in\Rp$, write
\begin{equation} \label{eq:onePointExp}
  z(r)=P(x_{2^r}\in \eta_{2^r}).
\end{equation}

\begin{prop}\label{prop:funceqss} For any $n \in\mathbb{Z}^+$ and all $r,s\in[0,1]$,
    \begin{equation}\label{eq:funceqess}
    \frac{z(r+s+n)z(n)}{z(r+n)z(s+n)}=1+O\big(d_x^{-c}2^{-\delta n}\big).
    \end{equation}
\end{prop}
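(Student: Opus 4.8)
The plan is to reduce the quantity $z(r+s+n)z(n)/\big(z(r+n)z(s+n)\big)$ to a product of decoupled hitting probabilities via Proposition~\ref{prop:decoup}, exactly as asymptotic independence was used in \cite{Natural}, and then exploit the scaling invariance of $\KK$ to see that the leading-order terms cancel. First I would fix a large auxiliary integer $k$ (comparable to $\log_2(1/d_x)$ up to a constant, so that the error $2^{-k^2}$ is dominated by $d_x^{-c}2^{-\delta n}$ after optimizing) and observe that hitting the single rescaled lattice point $x_{2^{r+n}}$ by $\eta_{2^{r+n}}$ is, up to a multiplicative $1+O(2^{-cn})$ factor coming from Proposition~\ref{prop:Hanyscale} and the up-to-constant estimates of Proposition~\ref{prop:uptocb1}, the same as hitting a small ball $B(x, 2^{-(r+n)})$ by $\KK$. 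This is where Proposition~\ref{prop:continuity} (the continuity result of Section~\ref{se:continuity}) enters: it lets me pass freely between ``$\eta_m$ hits a point'' in the discrete and ``$\KK$ hits a ball'' in the continuum without losing the convergence rate. So all four factors $z(\cdot)$ get rewritten as $\KK$-ball-hitting probabilities at a common center $x$ but at four radii $2^{-(r+s+n)}, 2^{-n}, 2^{-(r+n)}, 2^{-(s+n)}$.

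Next I would use scaling invariance of $\KK$ (equation~\eqref{eq:scaleInvariance}) together with the decoupling estimates~\eqref{eq:ppbb}--\eqref{eq:bpbb}: writing the hitting of $B(x,\rho)$ as hitting $\rho\cdot(\text{unit ball})$ shifted to $x$ and rescaling the whole picture to the reference scale $2^n$, the probability $P(\KK\cap B(x,2^{-t})\neq\emptyset)$ factorizes as $P(x\in\text{rescaled LERW at scale }2^n)\cdot a_{2^n}(S_t)\,[1+O(2^{-k^2})]$, where $S_t$ is the rescaled small box of side $\asymp 2^{-t}$ around the origin. Crucially, by the scaling invariance of $\KK$ the quantity $a_{2^n}(S_t)$ depends on $t$ only through $t-n$ (up to $1+O(2^{-k^2})$), because a further dyadic rescaling by $2^{-(t-n)}$ maps the configuration back to the reference one. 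Hence $a_{2^n}(S_{r+s+n}) a_{2^n}(S_n) / \big(a_{2^n}(S_{r+n}) a_{2^n}(S_{s+n})\big)$ is of the form $h(r+s)h(0)/\big(h(r)h(s)\big)$ for the function $h(u)\asymp 2^{-(3-\beta)u}$ arising from the up-to-constant one-ball estimate, and the genuinely logarithmic (multiplicative) structure of this function makes that ratio $1+O(2^{-k^2})+O(2^{-\delta n})$. Collecting all error terms — the $2^{-cn}$ from Proposition~\ref{prop:Hanyscale}, the $\varepsilon^{cb}$-type error from Proposition~\ref{prop:continuity}, and the $2^{-k^2}$ from decoupling — and choosing $k$ as a function of $d_x$ and $n$ to balance them yields the claimed bound $1+O\big(d_x^{-c}2^{-\delta n}\big)$.

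The main obstacle I expect is bookkeeping the error terms uniformly in $d_x$: the decoupling proposition requires $|z-w|\geq 2^{-k^2}$ and $m\geq 2^{k^4}$, so $k$ cannot be taken too large relative to $n$, yet the small balls $B(x,2^{-(r+n)})$ must be well-separated from $0$ and $\partial\DD$ at scale $d_x$, which forces $k\gtrsim\log(1/d_x)$; reconciling these two constraints is what produces the polynomial factor $d_x^{-c}$ and must be done carefully. A secondary technical point is that the four radii are all comparable (they differ only by bounded multiplicative factors since $r,s\in[0,1]$), so the ``comparability'' hypothesis~\eqref{eq:comparability} of Proposition~\ref{prop:funceq} is automatically in force; this is what guarantees the ratio is close to $1$ rather than merely bounded, and it is cleanest to invoke Proposition~\ref{prop:uptocb1} directly for these comparisons rather than re-deriving them. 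Once~\eqref{eq:funceqess} is established, Proposition~\ref{prop:funceq} applied to $f(r)\coloneqq\lim_{n\to\infty} 2^{(3-\beta)n} z(r+n)$ (whose existence over a subsequence follows from tightness, and whose uniqueness follows from~\eqref{eq:funceqess} itself) delivers $f(r)=AB^r$ with $B=2^{-(3-\beta)}$, hence Theorem~\ref{thm:oneptnew}.
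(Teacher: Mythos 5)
Your plan has two genuine gaps, and they sit precisely where the real work of this proposition lies. The first is the opening reduction: you assert that $P\big(x_{2^{t}}\in\eta_{2^{t}}\big)$ equals $P\big(\KK\cap B(x,2^{-t})\neq\emptyset\big)$ up to a factor $1+O(2^{-cn})$, citing Propositions~\ref{prop:Hanyscale} and~\ref{prop:continuity}. Neither tool can deliver this. The L\'evy-metric rate in Proposition~\ref{prop:Hanyscale} is $m^{-c}$ for a small, unspecified $c$, so it only resolves hitting probabilities of balls whose radius is much larger than $m^{-c}$; here the ball has radius $m^{-1}$, far below that resolution, and Proposition~\ref{prop:continuity} concerns a \emph{fixed} ball as $m\to\infty$, not balls shrinking at the lattice scale. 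Moreover the statement is false with constant $1$: by Theorems~\ref{thm:oneptnew} and~\ref{thm:oneballnew} the ratio of the ball-hitting probability at radius $m^{-1}$ to the point probability tends to a nontrivial constant $c_2$, and proving that this ratio converges at all, uniformly in the scale (which is what you need so that it cancels in your four-fold ratio), is essentially the content of Proposition~\ref{one-ball-re} and Theorem~\ref{thm:oneballnew} --- results proved \emph{after}, and partly by means of, the present proposition. So this step assumes a sharp lattice-scale discrete-to-continuum comparison that is part of what is being established.

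The second gap is the appeal to Proposition~\ref{prop:decoup}: \eqref{eq:ppbb} and \eqref{eq:bpbb} are two-point statements, decoupling the local behaviors at two locations $z,w$ with $|z-w|\geq 2^{-k^2}$ relative to the joint probability $P(z,w\in\eta_m)$; in the one-point function $z(t)$ there is no second marked point, so your factorization $P(\KK\cap B(x,2^{-t})\neq\emptyset)\approx P(x\in\eta_{2^n})\,a_{2^n}(S_t)$ is not covered by it. What it would require is a one-point local/global decoupling, plus a quantitative scaling statement for the lattice quantity $a_{2^n}(\cdot)$, whose normalization $P(\hat x\in\eta_{2^n})$ is exactly the unknown $z(n)$ --- so your ``$a_{2^n}(S_t)$ depends only on $t-n$'' step is close to assuming the conclusion. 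The paper's proof avoids both issues by a different route: it writes $z(M)=G_{\DD_{M}}(0,x_{M})\,P^{M}_{x_{M},x_{M}}\big(\LE(X)\cap Y[1,T]=\emptyset\big)$ via the Green's function decomposition and reversibility of LERW, separates the starting points of $X$ and $Y$ as in \cite{Escape} to form the quantities $W_{x_*,y_*}(\cdot)$, and then shows $W_{x_1,y_1}(n+r+s)\simeq W_{x_1,y_1}(n+s)$ and $W_{x_2,y_2}(n+r)\simeq W_{x_2,y_2}(n)$ by coupling with Kozma's scaling limit and its scaling invariance; the ratio identity \eqref{eq:chg1} then gives \eqref{eq:funceqess}. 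If you want to salvage your continuum-ball route, you would first have to prove the one-point decoupling and the lattice-scale comparison, at which point you have essentially reconstructed the paper's argument.
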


\begin{proof}[Proof of Proposition \ref{prop:funceqss}]
Let $G_{\DD}(\cdot,\cdot)$ stand for the Green's function of the simple random walk in $\DD$ killed upon exiting $\DD$. Given $r>0$ and $x,y\in \mathbb{D}_{2^r}$, recall that $P^{2^r}_{x,y}$ denotes the joint law of $(X,Y)$ defined just above~\eqref{eq:decompGreen}.
As in~\eqref{eq:decompGreen}, the probability in~\eqref{eq:onePointExp} that $x_{2^r}$ is on $\eta_{2^r}$ satisfies 
\begin{equation} \label{eq:decompZ} 
  z({r})=G_{\DD_{2^r}}(0,x_{2^r}) P^{2^r}_{x,x}(\LE(X) \cap Y[1,T] =\emptyset) .
\end{equation}

By the same technique as in~\cite{Escape}, we need to change the starting point of $X$'s and $Y$'s as well as truncate the tail of $X$. We pick a small $q\in(0,1/10)$ as in  \cite[Proposition 3.4]{Escape}. For any $n\in\mathbb{Z}^+$ and $r,s\in[0,1]$, let 
\[ 
  x_1=(2^{-5qn-r-s},0,0),\;y_1=-x_1,\; x_2=2^s x_1,\;y_2=2^s y_1.
\]
For $x_i,y_i\in\mathbb{R}^3$, $i=1,2$, 
write 
\[ 
  W_{x_i,y_i}(r)\coloneqq P^{2^r}_{(x-x_i),(x+y_i)}\big(\LE(X) \cap Y =\emptyset\big)
\]
for the non-intersection probability of the loop-erasure of a SRW started from (the discretization of)  $x-x_i$ conditioned to hit $0$ before exiting $\mathbb{D}_{2^r}$ and SRW $Y$ starting from (the discretization of) $x+y_i$ and stopped at exiting $\mathbb{D}_{2^r}$.

In the following we use $a(m) \simeq b(m)$ to represent $a(m)=b(m)\big(1+O(m^{-c})\big)$ for constants universal for $r,s\in[0,1]$. Using the same technique as in \cite{Escape}, we have (note that constants in $\simeq$ are universal for $r,s\in[0,1]$)
\begin{equation}\label{eq:chg1}
\frac{z(r+s+n)}{z(r+n)}\bigg/\frac{z(s+n)}{z(n)} \simeq\frac{W_{x_1,y_1}(n+r+s)}{W_{x_2,y_2}(n+r)}\bigg/\frac{W_{x_1,y_1}(n+s)}{W_{x_2,y_2}(n)}.
\end{equation}
Repeating the proof of \cite{Escape} but also involving the scaling invariance of the scaling limit of LERW, see Subsection 6.1 of \cite{Kozma}), we have
\begin{equation}\label{eq:chg2}
W_{x_1,y_1}(n+r+s)\simeq W_{x_1,y_1}(n+s)\quad\mbox{ and }\quad W_{x_2,y_2}(n+r)\simeq W_{x_2,y_2}(n).
\end{equation}
The claim \eqref{eq:funceqess} now follows from \eqref{eq:chg1} and \eqref{eq:chg2}.
\end{proof}

\begin{proof}[Proof of Theorem \ref{thm:oneptnew}]
We start with \eqref{eq:oneptnew}. We first note that in \cite[Theorem 1.1]{Escape} the claim \eqref{eq:oneptnew} has already been verified for $m=2^n$, $n\in\Zp$. 
    Moreover, the proof of \cite[Theorem 1.1]{Escape} is valid for $m\in[2^n,2^{n+1})$, with $n\in\Zp$. 
    We write $m=2^{n+r}$ for $r\in[0,1)$, and following the steps in \cite{Escape},
    by an application\footnote{Note that one can pick constants in that proof uniformly for $s\in[0,1]$, for the same reason as the proof of Proposition \ref{prop:Hanyscale}.} of 
\cite{Kozma} for the scaling limit of LERW in $2^s\mathbb{D}$ for $s\in[0,1]$, we obtain that there exist $c,\delta>0$ and some $f:[0,1)\to \mathbb{R}^+$, such that for any $r\in[0,1)$,
\begin{equation}\label{eq:zn}
z(n+r)= f(r) 2^{-(3-\beta)n}\Big[1+O\big(d_x^{-c}2^{-\delta n}\big)\Big]
\end{equation}
where $d_x=\min(|x|,1-|x|)$ was defined in Theorem \ref{thm:oneptnew} and the constants in $O(\cdot)$ do not depend on $r$.

 To show \eqref{eq:oneptnew}, it now remains to show that there exists some $c$ such that for all $r\in[0,1)$,
\begin{equation}\label{eq:funceq}
f(r)=c 2^{-(3-\beta) r}.
\end{equation}
To check \eqref{eq:funceq}, we apply Proposition~\ref{prop:funceq}. With this purpose, we now extend the definition of $f$ to the whole positive half-axis: for $s= t+r$ such that $t \in\Z$ and $r\in[0,1)$, let
\begin{equation}
    f(s)=f(r)2^{-(3-\beta)t}.
\end{equation}
Now we verify that the function $f$ satisfies the conditions of Proposition~\ref{prop:funceq}. The claim \eqref{eq:trivial} is trivial. Note that condition \eqref{eq:funceqlin} follows from Proposition~\ref{prop:funceqss}, as the constants in the $O(\cdot)$ expression in \eqref{eq:zn} does not depend on $r$. To check condition \eqref{eq:comparability}, we apply~\cite[Proposition 8.2]{Growth}, \cite[Proposition 8.5]{Growth} and \cite[Corollary 1.3]{Escape}. This finishes the proof of \eqref{eq:oneptnew}.

Recall the definition of $g(\cdot)$ from \eqref{eq:oneptnew}. The continuity of $g(\cdot)$ follows from~\cite[Lemma 5.4]{Natural}. The rotation-invariance of $g(\cdot)$ is a corollary of Theorem \ref{thm:oneballnew} and the rotation-invariance of $\KK$.
\end{proof}

Finally, we note that similar techniques as in the proof above also allow us to obtain the  sharp asymptotics of non-intersection probabilities of LERW and SRW in any scale, which is an improvement of \cite[Theorem 1.2]{Escape}.

Let $S = S[0,\infty)$ be a discrete-time simple random walk on $\mathbb{Z}^3$ started from $0$ and $T_m \coloneqq \inf\{T\geq 0;|S(T)|\geq m\}$ be the first exit time of a ball of radius $m$. 
Let $\gamma_m \coloneqq \LE(S[0,T_m])$ be the LERW of SRW truncated upon leaving a ball of radius $m$. Recall that $\beta\in (1,5/3]$ stands for the growth exponent of 3D LERW.
Let $S'$ be an independent copy of $S$ and define $T'_m$ accordingly. Write $P$ for the joint law of $S$ and $S'$.
Let
\begin{equation}
    \Es(m) \coloneqq P\Big(\gamma_m\cap S'[1,T'_m]=\emptyset\Big)
\end{equation} 
stand for the non-intersection probability of $\gamma_m$ and $S'[1,T'_m]$ and recall the definition of the universal constant $c > 0$ from \cite[Theorem 1.2]{Escape}. 
\begin{theorem}\label{thm:esnew}
There exist a universal constants $c > 0$ and $\delta>0$ such that for all $m>0$,
\begin{equation}\label{eq:esnew}
\Es(m)=c m^{-2+\beta} \Big[ 1+O\big(m^{-\delta}\big)\Big].
\end{equation}
\end{theorem}
We omit the proof of this theorem as the same idea as above also works perfectly for asymptotics non-intersection probability.

  The sharp estimate on the non-intersection probability allows us to improve the upper bound on the one-ball hitting probability given in~\cite[Lemma 7.1]{SS}. We state this improved bound in the following lemma, as it will be useful to show our sharp bound for the one-ball hitting probability.

  \begin{lemma} \label{lemma:upperBoundOBall} 
  There exists a universal constant $C < \infty$ such that for all $ m \in \mathbb{R}^+ $, $s \in (0,1)$ and each $x \in \mathbb{D}_m $ with $d_x > 2s$, we have 
  \[
    P \left( \eta_m \cap B (x,s) \neq \emptyset \right) \leq C \left( \frac{s}{\vert x \vert} \right)^{3 - \beta}.
  \]
  \end{lemma}

  The proof of Lemma~\ref{lemma:upperBoundOBall} is the same as that of~\cite[Lemma 7.1]{SS}, but with the bounds for the escape probability from~\cite[Lemma 7.2.2]{Growth} replaced by those in Theorem~\ref{thm:esnew}.

\subsection{Proof of Theorem \ref{thm:oneballnew}}\label{sec:4.2}

In this subsection, we deal with the asymptotics of the probability that a LERW passes through a ball and prove Theorem \ref{thm:oneballnew}, which concerns the one-point function for the scaling limit. The general strategy of the proof is somewhat similar to that used in Theorem~\ref{thm:oneptnew} and in~\cite[Theorem 1.1]{Escape}. However, some special care is needed in the decomposition of the path, and a new decoupling technique plays a crucial role.

We start with a fixed point $\hat{x} =(1/2,0,0)$ which serves as a reference point.

\begin{prop}\label{one-ball-re}
For each $s\in\Rp$, there exists some $m_0=m_0(s)\in \Rp$ such that for all $m>m_0$, there exists some $c_2, \delta>0$ satisfying
\begin{equation}\label{eq:one-ball-re}
    P\Big( \eta_m \cap B(\hat{x},s) \neq \emptyset\Big)=c_2 s^{3-\beta} \Big[ 1+O\big(s^{\delta}\big)\Big].
\end{equation}
\end{prop}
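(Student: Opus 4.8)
The plan is to establish \eqref{eq:one-ball-re} by running the same machinery that produced the point-wise one-point estimate, Theorem~\ref{thm:oneptnew}, but now with the target being a ball $B(\hat{x},s)$ rather than a lattice point. First I would fix the reference point $\hat{x}=(1/2,0,0)$ and, for $s\in(0,1)$, write $s = 2^{-k-t}$ with $k\in\Zp$ and $t\in[0,1)$; the goal is to show that the quantity
\[
h(t) \coloneqq \lim_{k\to\infty} 2^{(3-\beta)k}\, P\big(\eta_{2^{k}}\cap B(\hat x, 2^{-t})\neq\emptyset\big)
\]
exists (with a polynomial rate in $k$), is comparable to a constant uniformly in $t\in[0,1]$, and satisfies a local logarithmic-additivity relation of the type in Proposition~\ref{prop:funceqss}; then Proposition~\ref{prop:funceq} forces $h(t)=c_2\,2^{-(3-\beta)t}$, which is exactly \eqref{eq:one-ball-re} up to the error $O(s^{\delta})$ after unwinding the scaling. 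The existence of the limit along dyadic scales with a polynomial rate is essentially contained in Proposition~\ref{prop:dyadic-onePoint} together with the decoupling of Proposition~\ref{prop:decoup}: indeed, $P(\eta_m\cap B(\hat x,s)\neq\emptyset) = P(\hat x_m\in\eta_m)\, a_m(B(0,s)_m)$ in the notation of \eqref{eq:anSdef}, and one needs to show $a_m(B(0,s)_m)$ converges as $m\to\infty$ to a quantity that scales like $s^{3-\beta}$.

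The first real step is the decomposition of the hitting event. Following the strategy of~\cite{Escape} and the proof of Theorem~\ref{thm:oneptnew}, I would condition on $\eta_m$ entering the ball $B(\hat x, s)$ and use the reversibility of LERW together with the non-intersection decomposition of~\cite[Proposition 8.1]{Growth} to express $P(\eta_m\cap B(\hat x,s)\neq\emptyset)$ as a Green's-function factor times a non-intersection probability $P^{m}_{\cdot,\cdot}(\LE(X)\cap Y=\emptyset)$ for a conditioned walk $X$ (conditioned to hit $0$ before exiting $\mathbb{D}_m$) and an independent walk $Y$, but now with $X$ stopped upon hitting the ball rather than pinned at a single point. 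The care alluded to in the subsection preamble is precisely here: one must choose the intermediate separation scales (analogous to the $x_1,y_1,x_2,y_2$ in the proof of Proposition~\ref{prop:funceqss}) so that the endpoint of $X$ on $\partial B(\hat x,s)$ can be decoupled from both the macroscopic geometry of $\eta_m$ and the microscopic behavior near $0$, and so that the scaling-invariance of $\KK$ (Subsection 6.1 of~\cite{Kozma}) can be applied to compare the configuration in $\mathbb{D}$ with the rescaled configuration in $2^{u}\mathbb{D}$ for $u\in[0,1]$, uniformly in $u$. This is where Proposition~\ref{prop:continuity} is genuinely used: it guarantees that "hitting $B(\hat x,s)$" and "hitting $B^\circ(\hat x,s)$" have the same probability in the limit, so that passing from discrete to continuum and back does not create boundary-hitting pathologies.

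Once the hitting probability is rewritten in terms of these non-intersection probabilities $W_{\cdot,\cdot}(M)$, the argument is parallel to \eqref{eq:chg1}--\eqref{eq:chg2} in Proposition~\ref{prop:funceqss}: a change of starting points plus a tail truncation of $X$ gives the logarithmic-additivity identity for $h$ up to an error $O(d_{\hat x}^{-c}2^{-\delta k}) = O(2^{-\delta k})$ (since $d_{\hat x}=1/2$ is fixed here), and repeating the \cite{Escape}-style cut-time coupling argument plus the scaling invariance of $\KK$ in annuli $2^{u}\mathbb{D}$ gives $W_{x_1,y_1}(k+s+t)\simeq W_{x_1,y_1}(k+t)$ and the companion relation. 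Comparability \eqref{eq:comparability} of $h$ follows from the up-to-constant one-ball estimate Proposition~\ref{prop:uptocb1} (with $d_{\hat x}=1/2$), which pins $P(\eta_m\cap B(\hat x,s)\neq\emptyset)\asymp s^{3-\beta}$ and hence $h(t)\asymp 1$ uniformly. Feeding all of this into Proposition~\ref{prop:funceq} yields $h(t) = c_2 2^{-(3-\beta)t}$ and therefore \eqref{eq:one-ball-re}.

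\textbf{Main obstacle.} The hard part will be the decoupling of the exit location of the conditioned walk $X$ on the sphere $\partial B(\hat x,s)$: unlike the point target in Theorem~\ref{thm:oneptnew}, here the "local" configuration is a genuine curve-segment hitting a ball, and one must show that the non-intersection probability factorizes — up to $[1+O(2^{-\delta k})]$ — into a macroscopic piece insensitive to $s$ and a microscopic piece carrying the $s^{3-\beta}$ scaling. This requires a version of the asymptotic-independence statement of Proposition~\ref{prop:decoup} adapted to the case where one of the two "observation windows" is itself of mesoscopic size $s$ rather than a fixed box, and where $X$ is conditioned; establishing this adapted decoupling — presumably via the cut-time coupling of~\cite{Natural} together with careful separation-of-scales bookkeeping — is the technical core of the proposition.
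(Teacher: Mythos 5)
Your plan correctly identifies the general philosophy (reversibility decomposition, decoupling of the local picture at the ball from the global one, scaling invariance of the limit plus a functional-equation argument to pin down the exponent in $s$), but as a proof it has a genuine gap: the step you yourself label the ``main obstacle'' — handling the hitting location on $\partial B(\hat x,s)$ and decoupling a mesoscopic window of size $s$ — is exactly the content of the paper's argument, and it is not something that can be borrowed from Proposition~\ref{prop:decoup}. That proposition requires the window to sit inside a box of half-width $2^{-k^4}$ with error $O(2^{-k^2})$; for a window of fixed radius $s$ this forces $k\lesssim(\log_2(1/s))^{1/4}$ and yields an error of order $\exp\{-c\sqrt{\log (1/s)}\}$, which is weaker than the polynomial rate $O(s^{\delta})$ claimed in the proposition. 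The paper instead proceeds by a last-exit decomposition of the hitting event over the discrete sphere $\partial_i B(\hat x,s)$ (two walks $S^1,S^2$ started at $y$, with $S^1$ conditioned to hit $0$ before exiting and $S^2$ avoiding the ball), then a coupling argument (modifying $u_1$ to a stopping time at scale $s^{\delta_0}$, passing to the infinite LERW via Masson's Proposition 4.4, and comparing with the scaling limit) to show that the summand is, up to $1+O(s^{\delta_1})$, proportional to the classical escape probability from $y$; summing over $y$ produces an explicit factor $c_1 s^2 m^2[1+O(s^{c_2})]$, the conditioned pair probability contributes $q_1 s^{-1}m^{-2}[1+O(s^{q_2})]$, and only the remaining conditional non-intersection probability $p(s)$ is treated by the scaling/functional-equation argument (Lemma~\ref{lemma:non-intersection}), giving $p(s)=q_3 s^{2-\beta}[1+O(s^{q_4})]$ and hence $s^{2}\cdot s^{-1}\cdot s^{2-\beta}=s^{3-\beta}$. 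None of this chain — in particular the reduction to the escape-probability ratio, the classical sharp estimates, and the cut-time couplings that make the error polynomial in $s$ — appears in your proposal; asserting that an ``adapted decoupling'' presumably exists is precisely the point that must be proved.

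Two further slips in the plan: the quantity $h(t)\coloneqq\lim_{k\to\infty}2^{(3-\beta)k}P(\eta_{2^k}\cap B(\hat x,2^{-t})\neq\emptyset)$ diverges as written (the ball radius should shrink like $2^{-k-t}$; as stated the probability tends to the order-one constant $P(\KK\cap B(\hat x,2^{-t})\neq\emptyset)$), and $a_m(B(0,s)_m)$ does not converge as $m\to\infty$ but grows like $m^{3-\beta}$ — compare Corollary~\ref{cor:asymp}, where it is $A_m(s)/m^{3-\beta}$ that converges. These are fixable, but they matter because your route applies the functional equation to the full ball-hitting probability in the radius variable, whereas the paper applies it only to the conditioned non-intersection probability after the polynomially-sharp classical factors have been extracted; without that reduction you have no mechanism to produce the multiplicativity relation with an $O(s^{\delta})$ error, which is what Proposition~\ref{prop:funceq} needs as input.
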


Before we proceed to the proof of Proposition~\ref{one-ball-re}, we present two of its consequences, the second one being a proof of Theorem~\ref{thm:oneballnew}.

The first consequence is on the asymptotic behavior of the function $a_m (\cdot)$ for $m \in \mathbb{R}^+$. Recall that $a_m (\cdot)$ is defined in \eqref{eq:anSdef}.  For $s > 0$, let 
\[
    A_m (s) = a_m \left( B(s) \cap m^{-1} \mathbb{Z}^3 \right) = \frac{ P \left(  \eta_{m} \cap B(\hat{x}, s)  \neq \emptyset \right) }{P (\hat{x} \in \eta_{m})}.
\]

From Theorem~\ref{thm:oneptnew} and Proposition~\ref{one-ball-re}, we easily obtain the following corollary. This result will be useful later in our study of two-point estimates (Proposition~\ref{prop:crux2pt} and Theorem~\ref{thm:two-point}).

\begin{cor}  \label{cor:asymp}
  Let $\delta > 0$ be the constant in Proposition~\ref{one-ball-re}.
  There exists a function $f : \mathbb{R}^{+} \to \mathbb{R}^+$ satisfying
  \[
    f(s)=c s^{3-\beta}\Big[  1+O\big(s^{-\delta }\big)\Big] \qquad  \text{ for } s > 0,
  \]
    and such that, for every $s > 0$,
    \begin{equation}\label{eq:asymp}
    \lim_{m \to \infty} \frac{A_m (s)}{m^{3-\beta}} = f(s) .
    \end{equation}
\end{cor}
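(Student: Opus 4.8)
\textbf{Proof plan for Corollary~\ref{cor:asymp}.}
The plan is to combine the two sharp one-point asymptotics already in hand --- Theorem~\ref{thm:oneptnew} for the probability that LERW passes a point, and Proposition~\ref{one-ball-re} for the probability that LERW hits a ball centered at the reference point $\hat x$ --- by simply taking the ratio. Concretely, recall
\[
  A_m(s)=\frac{P\big(\eta_m\cap B(\hat x,s)\neq\emptyset\big)}{P(\hat x\in\eta_m)}.
\]
Fix $s>0$. By Proposition~\ref{one-ball-re}, for all $m$ larger than some $m_0(s)$ we have $P\big(\eta_m\cap B(\hat x,s)\neq\emptyset\big)=c_2 s^{3-\beta}\big[1+O(s^{\delta})\big]$, where the $O(s^\delta)$ term is deterministic and $m$-independent (it is bundled into a single constant for fixed $s$). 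Meanwhile $\hat x\in\Dmo$ with $d_{\hat x}=1/2$, so Theorem~\ref{thm:oneptnew} applied at $x=\hat x$ gives $P(\hat x_m\in\eta_m)=g(\hat x)\,m^{\beta-3}\big[1+O(m^{-\delta})\big]$ as $m\to\infty$, with $g(\hat x)$ a fixed positive constant. Dividing,
\[
  \frac{A_m(s)}{m^{3-\beta}}
  =\frac{P\big(\eta_m\cap B(\hat x,s)\neq\emptyset\big)}{m^{3-\beta}\,P(\hat x_m\in\eta_m)}
  =\frac{c_2 s^{3-\beta}\big[1+O(s^{\delta})\big]}{g(\hat x)\big[1+O(m^{-\delta})\big]}
  \xrightarrow[m\to\infty]{}\frac{c_2 s^{3-\beta}}{g(\hat x)}\big[1+O(s^{\delta})\big].
\]
So the limit in \eqref{eq:asymp} exists; define $f(s)$ to be this limit.

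It then remains only to repackage the form of $f$. Setting $c=c_2/g(\hat x)$ we get $f(s)=c\,s^{3-\beta}\big[1+O(s^{\delta})\big]$ as $s\to 0$ from the display above; note the corollary states the error as $O(s^{-\delta})$, which is a typo for $O(s^{\delta})$ (the error is small for small $s$), or else one absorbs it into a relabeled exponent. For the claim to hold for all $s>0$ rather than just small $s$, one invokes the up-to-constant two-sided bound $P\big(\eta_m\cap B(\hat x,s)\neq\emptyset\big)\asymp s^{3-\beta}$ of Proposition~\ref{prop:uptocb1} (valid for $s<d_{\hat x}/100$) together with monotonicity in $s$ to see that $f(s)/s^{3-\beta}$ stays bounded above and below on any bounded range of $s$; then $f(s)=c\,s^{3-\beta}\big[1+O(s^{\delta})\big]$ continues to hold trivially for $s$ bounded away from $0$ by choosing the implied constant large enough on that range. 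Alternatively, and more cleanly, one observes that $f$ inherits the exact scaling relation $f(\lambda s)=\lambda^{3-\beta}f(s)$ from the scaling invariance of $\KK$ (Subsection~\ref{se:scalinglimit}), which forces $f(s)=c\,s^{3-\beta}$ exactly --- but that would contradict a nontrivial $O(s^\delta)$ correction, so in fact the correction is an artifact of the finite-$m$ approximation and the statement should be read as: $f(s)=c\,s^{3-\beta}$ with the $O(s^\delta)$ being the rate at which $A_m(s)/m^{3-\beta}$ or $P(\eta_m\cap B(\hat x,s))/P(\hat x\in\eta_m)$ approaches its scaling form. I would state it in whichever of these two forms matches the intended downstream use in Proposition~\ref{prop:crux2pt}.

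The only genuine subtlety --- and hence the main thing to be careful about --- is the interchange of limits and the uniformity of error terms: Proposition~\ref{one-ball-re} gives the ball-hitting asymptotic only for $m>m_0(s)$ with $m_0$ depending on $s$, while Theorem~\ref{thm:oneptnew} gives a rate uniform in $m$. Since we are taking $m\to\infty$ with $s$ fixed, this causes no problem: for each fixed $s$ we are eventually in the regime $m>m_0(s)$, and the ratio converges. One does not need any uniformity in $s$ for the statement of Corollary~\ref{cor:asymp} as written, because the conclusion is a pointwise-in-$s$ limit; the $s$-dependence appears only in describing $f(s)$ itself, which is already controlled. Thus there is no real obstacle here --- the corollary is a direct division of the two preceding sharp estimates, and the proof is a few lines once Proposition~\ref{one-ball-re} is established (which is the actual work, carried out separately).
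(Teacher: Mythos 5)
Your overall route --- dividing the ball-hitting asymptotic of Proposition~\ref{one-ball-re} by the sharp point estimate of Theorem~\ref{thm:oneptnew} at $x=\hat x$ --- is exactly what the paper intends (it derives the corollary from precisely these two results), and your observation that the $O(s^{-\delta})$ in the statement should be read as $O(s^{\delta})$ is correct. However, there is one genuine gap: the existence of the limit in \eqref{eq:asymp} does not follow from the two asymptotics alone. Proposition~\ref{one-ball-re} only says that for every $m>m_0(s)$ the hitting probability lies in the band $c_2 s^{3-\beta}\big[1\pm C s^{\delta}\big]$ with constants uniform in $m$; it does not make the error term literally $m$-independent, as you assert when you say it can be ``bundled into a single constant for fixed $s$''. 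From the division you can therefore only conclude that every subsequential limit of $A_m(s)/m^{3-\beta}$ lies in that band, not that the limit exists. To get actual convergence you need $P\big(\eta_m\cap B(\hat x,s)\neq\emptyset\big)\to P\big(\KK\cap B(\hat x,s)\neq\emptyset\big)$ as $m\to\infty$, which is supplied by the weak convergence of $\eta_m$ to $\KK$ (Proposition~\ref{prop:Hanyscale}, Corollary~\ref{cor:wCONV}) together with Proposition~\ref{prop:continuity}, which rules out the event that $\KK$ touches $\partial B(\hat x,s)$ without entering the open ball and thus makes the hitting event a continuity event. With that input, $f(s)=P\big(\KK\cap B(\hat x,s)\neq\emptyset\big)/g(\hat x)$, and the stated form of $f$ then follows from Proposition~\ref{one-ball-re} (equivalently Theorem~\ref{thm:oneballnew}).

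You should also drop the ``cleaner alternative'' at the end: $f$ does \emph{not} satisfy $f(\lambda s)=\lambda^{3-\beta}f(s)$ exactly. The scaling invariance of $\KK$ dilates the whole configuration (domain, starting point, and ball center) about the origin, whereas in $f$ the center $\hat x$ and the domain $\DD$ are held fixed while only the radius varies; the presence of $0$ and $\partial\DD$ at fixed distance from $\hat x$ breaks exact scaling, so the $O(s^{\delta})$ correction in $f$ is genuine and not an artifact of finite $m$. Indeed the downstream use in Proposition~\ref{prop:2preduction}, namely \eqref{eq:cruxlast2}, consumes exactly the form $f(s)=cs^{3-\beta}\big[1+O(s^{\delta})\big]$ with the error small for small radii, so the corollary must be read in that form rather than as an exact power law.
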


We now comment on the proof of Theorem~\ref{thm:oneballnew}. The strategy consists of checking that the one-point function appearing in Theorem~\ref{thm:oneballnew} is a constant multiple of the discrete one-point function established in Theorem~\ref{thm:oneptnew}. We first compare hitting probabilities for balls centered at a point  $x \in \mathbb{D}$ and at the reference point $\hat{x}$.

\begin{prop}\label{prop:rotation}
For any $x\in \mathbb{D}$ and $0<s<d_x=\min(|x|,1-|x|)$, there exists some $m_0=m_0(s)\in \Rp$ such that for all $m>m_0$,
\begin{equation}
\frac{P\Big(\eta_m \cap B(\hat{x},s) \neq \emptyset\Big)}{P\Big((\hat{x})_m\in \eta_m\Big)} =\frac{P\Big(\eta_m \cap B(x,s) \neq \emptyset\Big)}{P\Big(x_m\in \eta_m\Big)}\Big[ 1+O\big(d_x^{-c}m^{-\delta}\big)\Big]
\end{equation}
for some $c,\delta>0$.
\end{prop}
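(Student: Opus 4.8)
\textbf{Proof proposal for Proposition~\ref{prop:rotation}.}

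The plan is to reduce the comparison of ball-hitting probabilities at $x$ and at $\hat x$ to a comparison of the corresponding point-hitting probabilities, using the decoupling estimate of Proposition~\ref{prop:decoup} together with the scale-by-scale asymptotics already established. The key observation is that both ratios appearing in the statement should, after dividing out the one-point function, converge to the \emph{same} quantity $f(s)$ from Corollary~\ref{cor:asymp}: indeed $P(\eta_m\cap B(\hat x,s)\neq\emptyset)/P((\hat x)_m\in\eta_m)=A_m(s)$ by definition, and the claim is that $P(\eta_m\cap B(x,s)\neq\emptyset)/P(x_m\in\eta_m)$ has the same limit $f(s)m^{3-\beta}(1+o(1))$. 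So the real content is a \emph{uniform-in-scale} version of this convergence, with a polynomial error rate $d_x^{-c}m^{-\delta}$.

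First I would fix a dyadic box $V\in\cD^o_k$ containing $x$ with $k$ chosen so that $2^{-k}\asymp d_x$ (up to constants), so that $x$ sits at macroscopic distance from $0$ and $\partial\mathbb D$ within $V$; the error terms $2^{-k^2}$ in Proposition~\ref{prop:decoup} then translate into $d_x^{-c}$-type losses. The heart of the argument is then to write, for the reference point $\hat x$ and the point $x$, the ratio
\begin{equation}\label{eq:rot-plan1}
\frac{P(\eta_m\cap B(x,s)\neq\emptyset)}{P(x_m\in\eta_m)}
=\frac{P\big(S_1(x)\cap\eta_m\neq\emptyset\big)}{P(x_m\in\eta_m)},
\qquad S_1\coloneqq \big(B(0,s)\cap m^{-1}\Z^3\big),
\end{equation}
and compare it to the same expression centred at $\hat x$. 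The bridge is precisely the quantity $a_m(S_1)$ from \eqref{eq:anSdef}, which is \emph{independent of the location} $x$: one side of Proposition~\ref{prop:decoup} (the single-point version \eqref{eq:bpbb}, applied with $z=x$, $w=\hat x$ or similar, and with a suitable comparison between $P(z,w\in\eta_m)$ and $P(z\in\eta_m)$, $P(w\in\eta_m)$) shows that each of the two location-dependent ratios equals $a_m(S_1)\big[1+O(d_x^{-c}m^{-\delta})\big]$. Dividing the two identities gives the proposition. In executing this I would lean on Proposition~\ref{prop:uptocb2}, in particular \eqref{eq:1b1p}, to control $P(\eta_m\cap B(x,s)\neq\emptyset,\ \hat x_m\in\eta_m)$ relative to the product of one-point functions, and on the continuity result Proposition~\ref{prop:continuity} to pass freely between $B(x,s)$ and its interior when needed; the polynomial rate comes from combining the rate in \eqref{eq:ppbb}--\eqref{eq:bpbb} with the rate in the sharp one-point estimate \eqref{eq:oneptnew}.

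The main obstacle I anticipate is bookkeeping the error terms uniformly as $d_x\to 0$: the decoupling Proposition~\ref{prop:decoup} is stated with $z,w\in V$ at separation $\ge 2^{-k^2}$ and $m\ge 2^{k^4}$, so one must carefully choose $k=k(d_x)$ and check that the resulting constraint ``$m\ge m_0(s)$'' (with $m_0$ allowed to depend on $s$ and implicitly on $d_x$ through the statement's $d_x^{-c}$) is consistent with the claimed error $d_x^{-c}m^{-\delta}$; in particular the factor $s^{3-\beta}$ hidden inside $a_m(S_1)$ via Corollary~\ref{cor:asymp} must not interfere. A secondary point of care is that Proposition~\ref{prop:decoup} compares a \emph{two-point} hitting event to a \emph{two-point} function $P(z,w\in\eta_m)$, so to isolate the one-point quantities in the denominators of the displayed ratio one needs the two-point/one-point comparison $P(z,w\in\eta_m)\asymp C_V\,(m|z-w|)^{-(3-\beta)}P(w\in\eta_m)$ with a controlled error, which is exactly the kind of estimate available from \eqref{eq:1b1p} and the sharp one-point asymptotics — but the matching of multiplicative constants has to be done with the rate, not just up to constants. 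Once these are in place the proposition follows, and it feeds directly into the proof of Theorem~\ref{thm:oneballnew} by identifying $P(\KK\cap B(x,r)\neq\emptyset)$ as $c_2\,g(x)\,r^{3-\beta}$ up to the stated error.
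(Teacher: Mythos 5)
Your plan has a genuine gap: the tool you make central, Proposition~\ref{prop:decoup}, does not apply in the regime the proposition requires. First, the decoupling statement demands $S_1,S_2\subset[-2^{-k^4},2^{-k^4}]^3$, i.e.\ sets that are \emph{microscopically} small compared with the box scale $2^{-k}$, whereas here $S_1$ would be a ball of macroscopic radius $s$, with $s$ allowed to be anything up to $d_x\asymp 2^{-k}$; with your choice $2^{-k}\asymp d_x$ the admissible diameter $2^{-k^4}$ is vastly smaller than $d_x$, so the hypothesis fails for essentially the whole range of $s$ covered by the statement (and by Theorem~\ref{thm:oneballnew}, which needs $r$ up to $d_x/2$). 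Second, \eqref{eq:bpbb} requires $z,w$ to lie in a \emph{common} box $V\in\cD^o_k$ with $|z-w|\ge 2^{-k^2}$; taking $z=x$, $w=\hat x$ is impossible for general $x$ (they are at order-one distance, and when $d_x$ is small no admissible box of side $2^{-k}$ contains both). Third, even where decoupling applies, it only normalizes by the two-point function $P(z,w\in\eta_m)$: to reach the one-point ratios $P(\eta_m\cap B(x,s)\neq\emptyset)/P(x_m\in\eta_m)$ you must strip off the auxiliary far-away point with a multiplicative rate, and the only quoted input, \eqref{eq:1b1p}, is up-to-constants — as you yourself note. That step is not bookkeeping; it is precisely the hard localization statement, and no sharp two-point asymptotic with the required precision is available at this stage of the paper.

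The paper's route is different: Proposition~\ref{prop:rotation} is proved by the same coupling technique as Proposition~\ref{one-ball-re} — decompose at the hitting of $\partial B(x,s)$, replace the relevant LERW piece by the infinite LERW via Masson's result, localize the conditional non-intersection event to a slightly larger ball with quantitative errors, and then couple the local picture with the scaling limit — with the extra ingredient being the \emph{rotation invariance} of $\KK$, which is what transfers the local quantity from the direction of $x$ to that of the reference point $\hat x$. Your proposal never invokes rotation invariance and tries to obtain location-independence purely from Proposition~\ref{prop:decoup}; that substitution does not cover macroscopic $s$ nor the separation geometry between $x$ and $\hat x$, so the argument as outlined would not go through without essentially redoing the coupling argument of Subsection~\ref{sec:4.2}.
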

We omit the proof of Proposition~\ref{prop:rotation} as it follows from the same coupling technique as the proof of Proposition \ref{one-ball-re}, with the only extra ingredient being the rotation invariance of the scaling limit of 3D LERW.

Finally, Theorem~\ref{thm:oneballnew} follows from passing the results of Propositions~\ref{one-ball-re} and~\ref{prop:rotation} in the continuum thanks to Proposition~\ref{prop:continuity}.

We now work towards the proof of Proposition~\ref{one-ball-re}. First, recall that $S_m$ is a simple random walk on $m^{-1}\mathbb{Z}^3 $ starting at $0$ and stopped on its first exit of $\mathbb{D}_m$, and  $ \eta_m  = \LE (S_m ) $.
Let us introduce additional notation that will be used for the rest of this section.
\begin{itemize}
  \item Let 
  \begin{equation} \label{eq:boundary}
  \partial_{i} B( \hat{x}, s ) =
  \left\{ y \in m^{-1} \mathbb{Z}^{3} \cap B ( \hat{x} , s)  \ \Big| \ 
  \begin{array}{c}
  \text{There exists } \\ z \in m^{-1} \mathbb{Z}^{3} \cap B ( \hat{x} , s)^{c}  \\ \text{ such that } |y -z | = m^{-1}  
  \end{array}
  \right\}
  \end{equation}
  be the (discrete) inner boundary of $B ( \hat{x} , s)$.

  \item $S^{1}$ and $ S^{2}$ stand for independent simple random walks in $m^{-1} \mathbb{Z}^{3}$ whose probability law is $P^{v, w}_{1,2}$ when we assume $S^{1} (0) = v$ and $S^{2} (0) = w$.  

  \item Let 
  \begin{equation} \label{eq:firsthit0}
    t^{1}_{0} =  \inf \{ j \geq 0  \ | \ S^{1} (j) = 0 \}
  \end{equation}
  be the first time that $S^{1}$ hits the origin and write $$\gamma^0={\rm LE}(S^1[0,t^{1}_{0}])$$ for short.

  \item $T^{i}$ is the first time that $S^{i}$ leaves $\mathbb{D}$ for $i=1, 2$, i.e.
  \begin{equation} \label{eq:firstexit}
    T^{i} = \inf \{ j \geq 0 \ | \ S^{i}(j) \notin \mathbb{D} \}.
  \end{equation}

  \item Let 
  \begin{equation} \label{eq:xi}
  \xi = \max \Big\{ j \geq 0 \ \Big| \  S (j) \in B (\hat{x} , s)  \Big\}
  \end{equation}
  be the last time that the simple random walk  $S_m$ stays in $B (\hat{x} , s)$. 

  \item Let 
  \begin{equation} \label{eq:u1}
  u_{1} = \max \Big\{ j \in [0, u_2 ] \ \Big| \   \gamma^0 (j) \in B ( \hat{x} , s)  \Big\}
  \end{equation}
  be the last time that the loop-erasure of $S^{1}$ stays in $B ( \hat{x} , s)$.

  \item Write
  \begin{equation} \label{eq:u2}
    u_{2} = \text{len} \Big( \gamma^0 \Big) 
  \end{equation}
  for the length of the loop-erasure.

  \end{itemize}

\begin{proof}[Proof of Proposition \ref{one-ball-re}]

Let $  y \in \partial_i B (\hat{x}, s) $. 
If the random walk $S_m$ hits the vertex $y$, note that (up to time reversal) $S_m$  is equivalent to the concatenation of two independent simple random walks starting at $y$, one going forward in time, $S^2$, and the other going backward $S^1 [0, t^{1}_{0}] $,
conditioned on the event that $S^1$ hits $0$ before exiting $\mathbb{D}$, i.e. $ t^{1}_{0} < T^{1} $. 
If we further assume that the last visit to $B (\hat{x},s)$ occurs at time $\xi$ with $S_m (\xi ) = y$, we obtain:
\[
  P \left (   S_m (\xi ) = y \right) 
  =  
  P^{y,y}_{1,2} \left(  t^{1}_{0} < T^{1},  S^{2} [1, T^{2} ]  \cap B ( \hat{x} , s) = \emptyset    \right).
\]
By definition of the loop-erasure, writing
$$
S^2_+=S^{2} [1, T^{2} ] 
$$
for short, we have
\begin{equation} \label{mendoi-00}
P \left (   S_m (\xi) = y , \eta_m \cap B  (\hat{x}, s ) \neq \emptyset\right) 
  =  
  P^{y,y}_{1,2}
  \Bigg(  
    \begin{array}{c}
    t^{1}_{0} < T^{1}, \ \gamma^0 [u_{1}, u_{2} ]  \cap S^2_+ = \emptyset, \\
    S^2_+ \cap B ( \hat{x} , s) = \emptyset   
    \end{array}
  \Bigg).
\end{equation}
Applying a last-exit decomposition and using~\eqref{mendoi-00}, we obtain the following expression for the left-hand side of~\eqref{eq:one-ball-re}: 
\begin{align}\label{mendoi}
   P \left( \eta_m \cap B  (\hat{x}, s ) \neq \emptyset \right) 
   &= \sum_{ y \in \partial_i B (\hat{x}, s) } P \left(  S_m (\xi) = y , \eta_m \cap B  (\hat{x}, s ) \neq \emptyset \right) \nonumber \\
   &= \sum_{y \in \partial_{i} B(\hat{x}, s ) }  P^{y,y}_{1,2}
  \Bigg(  
    \begin{array}{c}
    t^{1}_{0} < T^{1}, \ \gamma^0 [u_{1}, u_{2} ]  \cap S^2_+ = \emptyset, \\
    S^2_+ \cap B ( \hat{x} , s) = \emptyset   
    \end{array}
  \Bigg).
\end{align}
Here, we use the notation introduced above, between~\eqref{eq:boundary} and~\eqref{eq:u2}.

We would like to say that the probability in the sum of~\eqref{mendoi} actually does not depend on $y \in \partial_{i} B(\hat{x}, s ) $. However, unfortunately, this is not the case. To deal with this issue, we use a coupling method as follows.

First, we will change the length of the loop-erasure of $S^1$ in the events appearing in~\eqref{mendoi}. 
We will replace $u_{1}$ with the time $u_{1}'$ as defined below in~\eqref{eq:newlength}.
Let $\alpha = 2 - \beta  \in [1/3, 1)$. We set
\begin{equation} \label{eq:delta0}
  \delta_{0} = (1- \alpha )/2 >0
\end{equation}
 such that $1- \delta_{0} $ is the middle point of $\alpha$ and $1$. We let
\begin{align} \label{eq:newlength}
&v_{1} = \inf \Big\{ j \ge 0 \ \Big| \ \gamma^0 (j) \notin B \big(\hat{x} , s^{\delta_{0}} \big) \Big\} \text{ and } \nonumber \\
&u_{1}' =  \max \Big\{ j \in [0, v_1 ] \ \Big| \   \gamma^0 (j) \in B (\hat{x} , s)  \Big\}.
\end{align}
The difference between $u_{1}$ and $u_{1}'$ is that $u_{1}$ considers the time interval  $[ 0, u_2] = [ 0, \text{len} (\gamma^0)]$ (with $u_{2}$ is as defined in~\eqref{eq:u2}), while $u_{1}'$ considers a possibly shorter time interval $[0 , v_{1}]$. See Figure~\ref{fig:LERWcomp} for a graphical reference.

\begin{figure}[h] 
\centering
\includegraphics{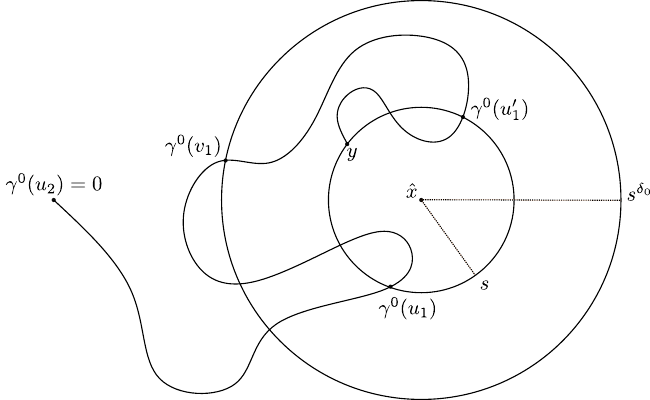}
\caption{An illustration of the event $ \{ u_1 \neq u_1' \} $.
}  \label{fig:LERWcomp}
\end{figure}

Note that the event $ \{ u_{1} \neq u_{1}' \} $ is contained in the event that the LERW path $\gamma^0  [v_1, u_2]$intersects the ball $B(\hat{x},s)$. Then, by Lemma~\ref{lemma:upperBoundOBall} and~\cite[Proposition 1.5.10]{Lawb}  
\begin{equation}\label{eq:bound1}
P^{y,y}_{1,2} \Big( t^{1}_{0} < T^{1}, \ u_{1} \neq u_{1}', \  S^2_+ \cap B (\hat{x} , s) = \emptyset   \Big) \le C s^{- \delta_{0}} m^{-2},
\end{equation}
and by adding~\cite[Proposition 3.2.2]{Hausdorff} for the lower bound
\begin{equation}\label{eq:bound2}
P^{y,y}_{1,2} 
\Big( 
t^{1}_{0} < T^{1}, \gamma^0 [u_{1}, u_{2} ]  \cap S^2_+ = \emptyset, \  S^2_+ \cap B (\hat{x} , s) = \emptyset  
\Big) 
\asymp s^{-1+ \alpha} m^{-2},
\end{equation}
where we have used $\alpha < 1 $ to simplify the exponents.
For each term in~\eqref{mendoi},
\begin{align*}
{\rm I}:=\;&P^{y,y}_{1,2} \Big( t^{1}_{0} < T^{1}, \ \gamma^0 [u_{1}, u_{2} ]  \cap S^2_+ = \emptyset, \  S^2_+ \cap B (\hat{x}, s) = \emptyset   \Big) \\
&=
P^{y,y}_{1,2} 
\left( 
\begin{array}{c}
t^{1}_{0} < T^{1},  \ u_{1} = u_{1}',   \ \gamma^0 [u_{1}, u_{2} ]  \cap S^2_+ = \emptyset, \  S^2_+ \cap B (\hat{x} , s) = \emptyset 
\end{array}
\right) \\
& \ \ \ + 
P^{y,y}_{1,2} 
\left( 
\begin{array}{c}
t^{1}_{0} < T^{1},  \ u_{1} \neq u_{1}', \ \gamma^0 [u_{1}, u_{2} ]  \cap S^2_+ = \emptyset, \  S^2_+ \cap B (\hat{x} , s) = \emptyset
\end{array}
\right)\\
&=:{\rm II}+{\rm III}.
\end{align*}
By \eqref{eq:bound1} and \eqref{eq:bound2}, one has that
$$
{\rm III}=O \big(s^{\delta_{0}}\big)\cdot{\rm I},
$$
while it is easy to see that 
$$
{\rm II}\leq P^{y,y}_{1,2} 
\left( 
\begin{array}{c}
t^{1}_{0} < T^{1},  \ \gamma^0 [u_{1}', u_{2} ]  \cap S^2_+ = \emptyset, \  S^2_+ \cap B (\hat{x} , s) = \emptyset 
\end{array}
\right)\leq {\rm I},
$$
which implies
\begin{align*}
&P^{y,y}_{1,2} \Big( t^{1}_{0} < T^{1}, \ \gamma^0 [u_{1}', u_{2} ]  \cap S^2_+ = \emptyset, \  S^2_+ \cap B (\hat{x} , s) = \emptyset   \Big) \\
&= P^{y,y}_{1,2} \Big( t^{1}_{0} < T^{1}, \ \gamma^0 [u_{1}, u_{2} ]  \cap S^2_+ = \emptyset, \  S^2_+ \cap B (\hat{x} , s) = \emptyset   \Big) 
 \Big[ 1 + O \big(s^{\delta_{0}}\big) \Big].
\end{align*}
Thus, we can replace $u_{1}$ by $u_{1}'$, which is easier to analyze. Indeed, if we let 
\begin{equation}\label{mou}
t_{2}  = \inf \Big\{ j \ge 0 \ \Big| \ S^{2} (j) \notin B \big(\hat{x} , s^{\delta_{0}} \big) \Big\}
\end{equation}
be the first time that $S^{2}$ leaves the ball  $B \big( \hat{x} , s^{\delta_{0}} \big) $, and write 
\[
    S^2_-=S^{2} [ 1, t_{2} ]
\]
for short. Then, by the coupling argument as stated in Lemma 3.3 of \cite{Natural} (see also Lemma 5.4 of \cite{Natural} for an analogous estimate concerning one-point function),  it follows that for all $y, z \in \partial_{i} B(\hat{x}, s ) $
\begin{align}
&\frac{P^{y,y}_{1,2} \Big( t^{1}_{0} < T^{1}, \ \gamma^0 [u_{1}', u_{2} ]  \cap S^2_+ = \emptyset, \  S^2_+ \cap B (\hat{x} , s) = \emptyset   \Big)}{P^{z,z}_{1,2} \Big( t^{1}_{0} < T^{1}, \ \gamma^0 [u_{1}', u_{2} ]  \cap S^2_+ = \emptyset, \  S^2_+ \cap B (\hat{x} , s) = \emptyset   \Big)}  \nonumber  \\
&\qquad=
\frac{
P^{y,y}_{1,2} \Big( 
t^{1}_{0} < T^{1}, \ \gamma^0 [u_{1}', v_1 ]  \cap S^2_- = \emptyset,   S^2_- \cap B (\hat{x} , s) = \emptyset   
\Big)
}
{
P^{z,z}_{1,2} 
\Big(
 t^{1}_{0} < T^{1}, \ \gamma^0 [u_{1}',  v_1 ]  \cap S^2_- = \emptyset,   S^2_- \cap B (\hat{x} , s) = \emptyset   
\Big)
} 
\Big[ 1 + O \big( s^{ \delta_0} \big) \Big].
\label{eq:comp1}
\end{align}
However, it is easy to see that for all $y, z \in \partial_{i} B(\hat{x}, s ) $
\begin{equation*}
P^{y} \big( t^{1}_{0} < T^{1} \big) = P^{z} \big( t^{1}_{0} < T^{1} \big) \Big[ 1 + O \big(\sqrt{s}\big) \Big]. 
\end{equation*}

Recall that $\gamma^0$ denotes the loop-erasure of a simple random walk starting at $y \in B(\hat{x}, s)$ and stopped upon hitting the origin $0$. The segment $\gamma^0 [u_1', u_2]$ corresponds to the portion of this LERW between its last visit to the boundary of $ B (\hat{x}, s)$ (before exiting $ B (\hat{x}, s^{\delta_0})$) and the origin. In~\eqref{eq:comp1}, we compared $\gamma^0 [u_1', u_2]$  with a shorter segment,  $\gamma^0 [u_1', v_1]$, which is stopped upon leaving $B(\hat{x}, s^{\delta^0})$ for the first time.
We now proceed to compare $\gamma^0 [u_1', v_1]$ with a corresponding segment of an infinite LERW, which is more amenable to analysis. This comparison also extends to $\gamma^0 [u_1', u_2]$. 

We write
\[
\gamma^{\infty} = \text{LE} \big( S^{1} [0, \infty ) \big)
\]
for the infinite LERW in $m^{-1} \mathbb{Z}^{3}$. If we let 
\begin{align}\label{yada}
&b = \inf \Big\{ j \ge 0 \ \Big| \ \gamma^\infty (j) \notin B \big( \hat{x} , s^{\delta_{0}} \big) \Big\} \text{ and } \notag \\
&a =  \max \Big\{ j \in [0, b ] \ \Big| \  \gamma^\infty (j) \in B (\hat{x} , s)  \Big\},
\end{align}
then it follows from~\eqref{eq:comp1} and~\cite[Proposition 4.4]{Masson} that for all $y, z \in \partial_{i} B(\hat{x}, s ) $
\begin{align}\label{mendoi-1}
&\frac{P^{y,y}_{1,2} \Big( t^{1}_{0} < T^{1}, \ \gamma^0 [u_{1}', u_{2} ]  \cap S^2_+ = \emptyset, \  S^2_+ \cap B (\hat{x} , s) 
= \emptyset   \Big)}{P^{z,z}_{1,2} \Big( t^{1}_{0} < T^{1}, \ \gamma^0 [u_{1}', u_{2} ]  \cap S^2_+ = \emptyset, \  S^2_+ \cap B (\hat{x} , s) = \emptyset   \Big)} \notag  \\
&\qquad=\frac{P^{y,y}_{1,2} \Big(  \gamma^\infty [a, b ]  \cap S^2_- = \emptyset, \  S^2_- \cap B (\hat{x} , s) = \emptyset   \Big)}{P^{z,z}_{1,2} \Big(  \gamma^\infty [a, b]  \cap S^2_- = \emptyset, \  S^2_-  \cap B (\hat{x} , s) = \emptyset   \Big)} \Big[ 1 + O \big(s^{\delta_{1}}\big) \Big].
\end{align}

For $L > 1$, we let 
\begin{equation*}
t_{2, L} = \inf \Big\{ j \ge 0 \ \Big| \ S^{2} (j) \notin B \Big( \hat{x}, s + s^{L} \Big) \Big\}.
\end{equation*}
By classical escape probability estimates for SRW, as in~\cite[Proposition 1.5.10]{Lawb}, and the asymptotics of non-intersection probabilities of LERW and SRW from Theorem~\ref{thm:esnew}, one has that 
\begin{align*}
&P^{y} 
\Big( 
  \text{diam} \big( S^{2} [0, t_{2, L} ] \big) \ge s^{L/2},  S^2_+ \cap B (\hat{x} , s) = \emptyset  
\Big) 
\le C m^{-1} s^{-L} O \big(  s^{L/2} \big) \\
&= 
P^{y,y}_{1,2} 
\Big(  
  \gamma^\infty [a, b ]  \cap S^2_- = \emptyset, S^2_-  \cap B (\hat{x} , s) = \emptyset  
\Big) 
O \Big(  s^{-L/4}  \Big).
\end{align*}
Therefore, we have 
\begin{align*}
P^{y,y}_{1,2} 
&\left(   
\gamma^\infty [a, b ]  \cap S^2_- = \emptyset, S^2_-  \cap B (\hat{x} , s) = \emptyset, \  \text{diam} \big( S^{2} [0, t_{2, L} ] \big) \le s^{L/2} 
\right) \\
&= P^{y,y}_{1,2} \Big(  \gamma^\infty [a, b ]  \cap S^2_- = \emptyset, \  S^2_-  \cap B (\hat{x} , s) = \emptyset   \Big) 
 \Big[ 1 + O \Big(  s^{-L/4}  \Big) \Big].
\end{align*}

We next want to replace the event 
\begin{equation*}
\Big\{ \gamma^\infty [a, b ]  \cap S^2_- = \emptyset \Big\} \qquad\text{ by }\qquad \Big\{ \gamma^\infty [a, b ]  \cap S^{2} [ t_{2, L}, t_{2} ] = \emptyset \Big\}.
\end{equation*}
To do it, we first claim that by \eqref{darui-6} 
\begin{equation*}
P^{y} \Big( \gamma^\infty [0, b ]  \cap B \big( \hat{x}, s- s^{L/4} \big) = \emptyset \Big) \le C s^{\delta_{2} L },
\end{equation*}
for some universal constant $\delta_{2} > 0$. This implies that taking $L$ sufficiently large 
\begin{align*}
&P^{y,y}_{1,2} \Big(  \gamma^\infty [a, b ]  \cap S^2_- = \emptyset, \  S^2_-  \cap B (\hat{x} , s) = \emptyset,    \gamma^\infty [0, b ]  \cap B \big( \hat{x}, s- s^{L/4} \big) \neq \emptyset   \Big) \\
&= P^{y,y}_{1,2} \Big(  \gamma^\infty [a, b ]  \cap S^2_- = \emptyset, \  S^2_-  \cap B (\hat{x} , s) = \emptyset   \Big) \Big[ 1 + O \big(s\big) \Big].
\end{align*}
Now suppose that both 
\begin{equation*}
\left\{  
  \begin{array}{c}
  \gamma^\infty [a, b ]  \cap S^{2} [ 1, t_{2, L} ] \neq \emptyset, \\  S^2_-  \cap B (\hat{x} , s) = \emptyset, \  \text{diam} \big( S^{2} [0, t_{2, L} ] \big) \le s^{L/2}
  \end{array} 
\right\}
\mbox{ and }
\Big\{ \gamma^\infty [0, b ]  \cap B \big( \hat{x}, s- s^{L/4} \big) \neq \emptyset  \Big\}
\end{equation*}
occur. This implies both of the following two events occur:  
\begin{itemize}
\item $S^{1}$ returns to $B \big( y, s^{L/2} \big) $ after it leaves $B \big( y, s^{L/4} \big) $,

\item $S^{2} [1, t_{2} ]  \cap B (\hat{x} , s) = \emptyset$.

\end{itemize}
By classical estimates on escape probabilities for SRW, the probability of these two events occurring is bounded above by $C m^{-1} s^{\frac{L}{4} -1}$, which is, taking $L$ sufficiently large, equal to 
\begin{equation*}
P^{y,y}_{1,2} \Big(  \gamma^\infty [a, b ]  \cap S^2_- = \emptyset, \  S^2_-  \cap B (\hat{x} , s) = \emptyset   \Big) O \big(s\big).
\end{equation*}
Combining these estimates, for sufficiently large $L$, we see that 
\begin{align*}
&P^{y,y}_{1,2} \Big(  \gamma^{\infty} [a, b ]  \cap S^{2} [ t_{2, L}, t_{2} ] = \emptyset, \ S^2_-  \cap B (\hat{x} , s) = \emptyset   \Big) \\
&= P^{y,y}_{1,2} \Big(  \gamma^{\infty} [a, b ]  \cap S^2_- = \emptyset, \  S^2_-  \cap B (\hat{x} , s) = \emptyset   \Big) \Big[ 1 + O \big(s\big) \Big].
\end{align*}

Now we can couple $\gamma^{\infty}$ and its scaling limit ${\eta}_{\infty}$. We also couple $S^{2}$ and the Brownian motion $W$.  Taking $m$ and $L$ sufficiently large, we see that 
\begin{align*}
&P^{y,y}_{1,2} \Big(  \gamma^{\infty} [a, b ]  \cap S^{2} [ t_{2, L}, t_{2} ] = \emptyset, \  S^2_-  \cap B (\hat{x} , s) = \emptyset   \Big) \\
&=\Big[ 1 + O \big(s\big)  \Big] \times  \sum_{w \in \partial B \big( \hat{x}, s + s^{L} \big) \cap B (y, s^{L/2}) } P^{y} \Big( S^{2} [1, t_{2, L} ] \cap  B (\hat{x} , s) = \emptyset,  \ S^{2} (t_{2, L} ) = w \Big)  \\
& \ \ \ \ \ \ \ \ \ \ \ \ \  \ \ \ \ \ \ \ \times P^{y, w}_{1,2} \Big( \gamma^{\infty} [a, b ]  \cap S^{2} [ 0, t_{2} ] = \emptyset, \  S^{2} [0, t_{2} ]  \cap B (\hat{x} , s) = \emptyset   \Big) \\
&= \Big[ 1 + O \big(s\big)  \Big]  P^{y} \Big( S^{2} [1, t_{2} ] \cap  B (\hat{x} , s) = \emptyset \Big) \\
& \ \ \ \ \ \ \ \ \ \ \ \ \  \ \ \ \ \ \ \times P^{y, w_{y}}_{1,2} \Big( \gamma^{\infty} [a, b ]  \cap S^{2} [ 0, t_{2} ] = \emptyset \ \Big| \  S^{2} [0, t_{2} ]  \cap B (\hat{x} , s) = \emptyset   \Big) \\
&= \Big[ 1 + O (s)  \Big]  P^{y} \Big( S^{2} [1, t_{2} ] \cap  B (\hat{x}, s) = \emptyset \Big)  \\  
& \ \ \ \ \ \ \ \ \ \ \ \ \  \ \ \ \ \ \ \times P^{y_{0}, w_{y_{0}}}_{1,2} \Big( \gamma^{\infty} [a, b ]  \cap S^{2} [ 0, t_{2} ] = \emptyset \ \Big| \  S^{2} [0, t_{2} ]  \cap B (\hat{x} , s) = \emptyset   \Big),
\end{align*}
where in the second equality, $w_{y}$ is the unique point in $ \partial B \big(\hat{x}, s + s^{L} \big) \cap B (y, s^{L/2})$ such that $\hat{x}$, $y$ and $w_{y}$ are lying on the same straight line and in the third equality,  $y_{0} \in \partial_{i} B(\hat{x}, s ) $ is the ``north pole'' of $B(\hat{x}, s )$. Note that the third equality follows from approximation of non-intersection probabilities by the continuous counterpart (similar to the argument in \cite[Proposition 3.16]{Escape}) and the invariance under rotation of $\eta_\infty$ and $W$.

Therefore, returning to \eqref{mendoi-1}, it follows that for all $y, z \in \partial_{i} B(\hat{x}, s ) $,
\[
\begin{split}
\frac{P^{y,y}_{1,2} \Big( t^{1}_{0} < T^{1}, \ \gamma^0 [u_{1}, u_{2} ]  \cap S^2_+ = \emptyset, \  S^2_+ \cap B (\hat{x}, s) = \emptyset   \Big)}{P^{z,z}_{1,2} \Big( t^{1}_{0} < T^{1}, \ \gamma^0 [u_{1}, u_{2} ]  \cap S^2_+ = \emptyset, \  S^2_+ \cap B (\hat{x} , s) = \emptyset   \Big)} \\ = \Big[ 1 + O \big( s^{c_{0}} \big) \Big] \frac{P^{y} \Big( S^2_- \cap  B (\hat{x}, s) = \emptyset \Big)}{P^{z} \Big( S^2_- \cap  B (\hat{x} , s) = \emptyset \Big)}.
\end{split}
\]

This gives that 
\begin{align*}
P \Big( \gamma_{m} \cap B(\hat{x},s) \neq \emptyset \Big)&=  P^{y_{0},y_{0}}_{1,2} \Big( t^{1}_{0} < T^{1}, \ \gamma^0 [u_{1}, u_{2} ]  \cap S^2_+ = \emptyset, \  S^2_+ \cap B (\hat{x} , s) = \emptyset   \Big) \\
& \qquad \qquad \times \Big[ 1 + O \big( s^{c_{0}} \big) \Big]  \sum_{y \in \partial_{i} B(\hat{x}, s ) } 
  \frac{P^{y} \Big( S^2_- \cap  B (\hat{x} , s) = \emptyset \Big)}{P^{y_{0}} \Big( S^2_- \cap  B (\hat{x} , s) = \emptyset \Big)}.
\end{align*}

It is known (see Subsection 2.2 of \cite{Lawb}) that 
\begin{equation*}
 \sum_{y \in \partial_{i} B(\hat{x}, s ) } \frac{P^{y} \Big( S^2_- \cap  B (\hat{x}, s) = \emptyset \Big)}{P^{y_{0}} \Big( S^2_- \cap  B (\hat{x} , s) = \emptyset \Big)} = c_{1} s^{2} m^{2}  \Big[ 1 + O \big(s^{c_{2}}\big) \Big].
 \end{equation*}
Thus, it suffices to obtain a sharp estimate on 
 \begin{equation*}
 P^{y_{0},y_{0}}_{1,2} \Big( t^{1}_{0} < T^{1}, \ \gamma^0 [u_{1}, u_{2} ]  \cap S^2_+ = \emptyset, \  S^2_+ \cap B (\hat{x}, s) = \emptyset   \Big).
\end{equation*}
 But, by classical hitting probability estimates and asymptotics for Green's function, it is not difficult to show that for sufficiently large $m$
 \begin{equation*}
 P^{y_{0},y_{0}}_{1,2} \Big( t^{1}_{0} < T^{1}, \   S^2_+ \cap B (\hat{x}, s) = \emptyset   \Big) = q_{1} s^{-1} m^{-2} \Big[ 1 + O \big(s^{q_{2}}\big) \Big],
 \end{equation*}
 for some universal constants $q_{1}, q_{2} > 0$. Thus, it suffices for us to give a sharp estimate on 
 \begin{equation}\label{mendoi-2.1}
 p(s)= p(s, m) \coloneqq P^{y_{0},y_{0}}_{1,2} \Big(  \text{LE} \big( X^{1} [0, t^{1}_{0}] \big) [u_{1}, u_{2} ]  \cap X^{2} [ 1, T^{2} ] = \emptyset   \Big),
 \end{equation}
 where $X^{1}$ is $S^{1}$ conditioned that $t^{1}_{0} < T^{1}$ and $X^{2}$ is $S^{2}$ conditioned that $S^2_+ \cap B (\hat{x} , s) = \emptyset$. 
We obtain such estimate in Lemma~\ref{lemma:non-intersection}, and this completes the proof of the proposition.
\end{proof}

We finish this subsection with the following lemma for the random walks $S^1$ and $S^2$ conditioned to the events appearing in~\eqref{mendoi-2.1}.
Let $X^1$ be the simple random walk $S^{1}$ conditioned to $t^{1}_{0} < T^{1}$, i.e. $X^1$ hits the origin before $\mathbb{D}$. 
Let $X^2$ be the simple random walk $S^{2}$ conditioned that $S^2_+ \cap B (\hat{x} , s) = \emptyset$, then $X^2$ does not return to the ball $B (\hat{x}, s)$. We show next the sharp estimate on their intersection probability.

\begin{lemma} \label{lemma:non-intersection}
 Let $X^{1}$ and $X^{2}$ be the conditioned simple random walks above. 
 There exists $m_0 > 0$ such that, for $m > m_0$ there exists universal constants $q_{3}, q_{4} > 0$ satisfying
 \begin{equation}\label{mendoi-2}
  P^{y_{0},y_{0}}_{1,2} \Big(  {\rm LE} \big( X^{1} [0, t^{1}_{0}] \big) [u_{1}, u_{2} ]  \cap X^{2} [ 1, T^{2} ] = \emptyset   \Big) = q_{3} s^{\alpha } \Big[ 1 + O (s^{q_{4}} ) \Big].
  \end{equation}
\end{lemma}

\begin{proof}[Proof of Lemma~\ref{lemma:non-intersection}]
Recall the definitions of and $u_1$ and $u_2$ in \eqref{eq:u1} and \eqref{eq:u2}, respectively.
Let us write
\[
  p(s)= p(s, m) \coloneqq P^{y_{0},y_{0}}_{1,2} \Big(  \text{LE} \big( X^{1} [0, t^{1}_{0}] \big) [u_{1}, u_{2} ]  \cap X^{2} [ 1, T^{2} ] = \emptyset   \Big).
\]
We first deal with the case that $s= 2^{-k}$.  Comparison results between the loop-erasure of $X^{1}$ and an infinite LERW (see Propositions 4.2, 4.4 and Corollary 4.5 of \cite{Masson} for this) show that, for an infinite LERW in $m^{-1} \mathbb{Z}^{3}$, again denoted by $\gamma^{\infty} = \text{LE} \big( S^{1} [0, \infty ) \big)$,
\begin{equation*}
\frac{p(2^{-k-1})}{p(2^{-k})} = \frac{P^{y_{0},y_{0}}_{1,2} \Big(  \gamma^{\infty} [a,  b]  \cap X^{2} [ 1, t_{2} ] = \emptyset   \Big)}{P^{y_{0}',y_{0}'}_{1,2} \Big(  \gamma^{\infty} [a',  b]  \cap X^{2} [ 1, t_{2} ] = \emptyset   \Big)} \ \Big[ 1 + O \big( 2^{-ck} \big) \Big],
\end{equation*}
where 
\begin{itemize}
\item $\delta_0  = (1-\alpha) /  2$, as defined in~\eqref{eq:delta0},

\item $y_{0}$ is the north pole of $B (\hat{x}, 2^{-k-1} )$ and $y_{0}'$ is the north pole of $B (\hat{x}, 2^{-k} )$

\item $b = \inf \Big\{ j \ge 0 \ \Big| \ \gamma^{\infty} (j) \notin B \big( \hat{x} , 2^{- \delta_{0} k} \big) \Big\}$,

\item $a=  \max \Big\{ j \in [0, b ] \ \Big| \   \gamma^{\infty} (j) \in B (\hat{x}, 2^{-k-1})  \Big\}$,

\item $a' = \max \Big\{ j \in [0, b ] \ \Big| \   \gamma^{\infty} (j) \in B (\hat{x}, 2^{-k})  \Big\}$ and

\item $t_{2} = \inf \Big\{ j \ge 0 \ \Big| \ X^{2} (j) \notin B \big( \hat{x}, 2^{- \delta_{0} k} \big) \Big\}$.

\end{itemize}
The same comparison argument gives that  
\begin{equation*}
\frac{p(2^{-k})}{p(2^{-k+1})} = \frac{P^{y_{0}',y_{0}'}_{1,2} \Big(  \gamma^{\infty} [a'',  b']  \cap X^{2} [ 1, t_{2}' ] = \emptyset   \Big)}{P^{y_{0}'',y_{0}''}_{1,2} \Big(  \gamma^{\infty} [a''',  b']  \cap X^{2} [ 1, t_{2}' ] = \emptyset   \Big)} \ \Big[ 1 + O \big( 2^{-ck} \big) \Big],
\end{equation*}
where we let 
\begin{itemize}

\item $y''_{0}$ is the north pole of $ B \big( x_{0} , 2^{- \delta_{0} k-1} \big) $,

\item $b' = \inf \Big\{ j \ge 0 \ \Big| \ \gamma^{\infty} (j) \notin B \big( \hat{x}, 2^{- \delta_{0} k -1} \big) \Big\}$,

\item $a''=  \max \Big\{ j \in [0, b' ] \ \Big| \   \gamma^{\infty} (j) \in B (\hat{x} , 2^{-k})  \Big\}$,

\item $a''' = \max \Big\{ j \in [0, b ] \ \Big| \   \gamma^{\infty} (j) \in B (\hat{x}, 2^{-k+1})  \Big\}$ and

\item $t_{2}' = \inf \Big\{ j \ge 0 \ \Big| \ X^{2} (j) \notin B \big( \hat{x}, 2^{- \delta_{0} k-1} \big) \Big\}$.

\end{itemize}
Replacing $\gamma^{\infty}$ and $X^{2}$ by their scaling limits which satisfy the scale invariance, we see that 
\begin{equation*}
 \frac{p(2^{-k-1})}{p(2^{-k})} = \frac{p(2^{-k})}{p(2^{-k+1})} \Big[ 1 + O \big( 2^{-ck} \big) \Big].
\end{equation*}
This ensures that 
\begin{equation}\label{mendoi-3}
p (2^{-k} ) = c^{0} 2^{- \alpha k } \Big[ 1 + O \big(2^{-c k } \big) \Big] \text{ as } k \to \infty,
\end{equation}
for some universal constants $c^{0}, c > 0$.

Similarly, for each $r \ge 0 $, one can prove that there exists $f(r) > 0$ depending on $r$ and universal $c > 0$ such that 
\begin{equation}\label{mendoi-4}
p (2^{- r} 2^{-k} ) = f(r) c^{0} 2^{- \alpha k } \Big[ 1 + O \big(2^{-c k } \big) \Big] \text{ as } k \to \infty.
\end{equation}
Note that $f(0) = 1$ and $f(1) = 2^{- \alpha}$. From now on, we will show that $f(r) = 2^{- \alpha r }$. To do this, we will first prove that 
\begin{equation}\label{mendoi-5}
f ( r_{1} + r_{2} ) = f(r_{1} ) f(r_{2} ) \  \ \  \text{ for all } \  r_{1}, r_{2} \ge 0.
\end{equation}
We can again use the scale invariance of the scaling limits of $\gamma^{\infty}$ and $X^{2}$ to show that 
\begin{equation*}
\frac{p ( 2^{-k - r_{2}} )}{p ( 2^{-k} )} = \frac{p ( 2^{- k - r_{1}})}{p ( 2^{-k-r_{1} -r_{2}} )} \Big[ 1 + O \big( 2^{-ck} \big) \Big].
\end{equation*}
This implies \eqref{mendoi-5}. Furthermore, using the coupling argument as in Step 2 of the proof of Proposition~\ref{prop:continuity}, it is not difficult (but rather tedious)  to prove that $f$ is continuous. Combining these properties with $f(0) =1$ and $f(1) = 2^{-\alpha}$. it holds that $f ( r) = 2^{- \alpha r }$. Consequently, we have 
\begin{equation}\label{mendoi-6}
p (2^{- (k+r)} ) =  c^{0} 2^{- \alpha (k +r) } \Big[ 1 + O \big(2^{-c (k+r) } \big) \Big] \text{ as } k \to \infty,
\end{equation}
for all $r \in [0,1]$. Equivalently, we have $p (s) = c^{0} s^{\alpha} \Big[ 1 + O \big(s^{c} \big) \big]$. This finishes the proof of the lemma.
\end{proof}

\subsection{Asymptotics of the one-point function}

We now turn to the asymptotics of the one-point function $g(\cdot)$ and prove Corollary~\ref{cor:asymptotics}.

\begin{proof}[Proof of Corollary~\ref{cor:asymptotics}]
   Recall that $\hat{x} = (1/2, 0 , 0)$ serves as our reference point.
  From~\eqref{eq:one-pointb0} we have that for $n \geq 2$
  \[
    a_1 \vert \hat{x} \vert^{- (3 - \beta)} \leq g (n^{-1} \hat{x} )  n^{\beta - 3} \leq a_2 \vert \hat{x} \vert^{- (3 - \beta)} .
  \]
  By continuity and rotational invariance of the one-point function (Theorem~\ref{thm:oneptnew}), in order to prove~\eqref{eq:asymp0} it suffices to verify that for some $b_1 > 0$
  \begin{equation} \label{eq:const-conv}
  g (n^{-1} \hat{x}) n^{\beta - 3} = b_1 \Big( 1 + O \big( n^{-\delta_1} \big) \Big) \qquad \text{ as } n \to \infty.
  \end{equation}

  Let $x \in \mathbb{D} \setminus \{ 0 \}$ be such that $\vert x \vert < 10^{-1}$.  
  From Theorem~\ref{thm:oneballnew}, for  we have that for any $ 0 < r < 10^{-1}d_{x} $
  \begin{equation} \label{eq:g-cont}
   \frac{g(2x)}{g(x)} 
   = \frac{ P ( \KK \cap B(2x, r) \neq \emptyset ) }{ P ( \KK \cap B(x, r) \neq \emptyset ) } 
    \Big[ 1 + O\big(d_x^{-c}r^{\delta}\big) \Big],
  \end{equation}
  for $\delta > 0$ as as in Theorem~\ref{thm:oneballnew}.

  Recall that $\DD$ denotes the open unit ball. For each $m \in \R^+$,  $\eta_m$ is the LERW on $\DD_m$. Keeping the notation introduced in Subsection~\ref{se:scalinglimit}, 
  let $2 \DD$ denote the open ball of radius 2, and define its discretization as $2 \DD_m = m^{-1} \mathbb{Z}^3 \cap 2\DD$. 
  
  Under this setup,  
  $\eta^{2}_{m} = \eta_{m/2} $ defines a LERW on $2 \DD_m$,  and $\KK^{2} $ is the scaling limit of  $ \eta^{2}_{m} $ on $2 \DD_m$ as $m \to \infty$. Note that under this scaling, $\KK^{2}$ is a simple curve on $2\DD$.
  Then, for any $m \in \R^+$, we have 
  \begin{equation} \label{eq:g-scaling}
    \frac{ P \left( (4x)_m \in \eta_m  \right) }{ P \left( (2x)_m \in \eta_m  \right) }
    =
    \frac{ P \left( (4x)_{m/2} \in \eta^2_{m}  \right) }{ P \left( (2x)_{m/2} \in \eta^2_{m}  \right) },
  \end{equation}
  where the right-hand side of this equation refers to one-point probabilities on $2\DD_m$.
  
  Taking $\delta > 0$ as in~\eqref{eq:g-cont},  let $m \in \R^+$ be such that $m^{-1} < r^{2\delta} $.  Then, by Theorem~\ref{thm:oneballnew}, there exist constants $c', \delta'>0$ such that
  \begin{equation} \label{eq:g-scalinglimit}
    \frac{ P \left( (4x)_{m/2} \in \eta^2_{m}  \right) }{ P \left( (2x)_{m/2} \in \eta^2_{m}  \right) } = 
    \frac{ P ( \KK^{2} \cap B(4x, 2r) \neq \emptyset ) }{ P ( \KK^{2} \cap B(2x, 2r) \neq \emptyset ) }  \Big[ 1 + O \big( d_x^{-c'} r^{\delta'} \big) \Big]. 
  \end{equation}
  The scaling invariance of the scaling limit of the LERW implies 
  \begin{equation} \label{eq:g-scalingInv}
  \frac{ P ( \KK^{2} \cap B(4x, 2r) \neq \emptyset ) }{ P ( \KK^{2} \cap B(2x, 2r) \neq \emptyset ) } 
  =
  \frac{ P ( \KK \cap B(2x, r) \neq \emptyset ) }{ P ( \KK \cap B(x, r) \neq \emptyset ) }.
  \end{equation} 
  Therefore, from~\eqref{eq:g-cont},~\eqref{eq:g-scaling},~\eqref{eq:g-scalinglimit} and~\eqref{eq:g-scalingInv} we have that there exist constants $ c'', \delta'' > 0 $
  \begin{equation} \label{eq:g-eq}
    \frac{g(2x)}{g(x)}  = \frac{ P \left( (4x)_m \in \eta_m  \right) }{ P \left( (2x)_m \in \eta_m  \right) } \Big[ 1 + O\big(d_x^{-c''}r^{\delta''}\big) \Big].
  \end{equation}
  On the other hand, Theorem~\ref{thm:oneptnew} implies 
  \begin{equation} \label{eq:g-def}
    \frac{g(4x)}{g(2x)} = \frac{P \left( (4x)_m \in \eta_m  \right) }{  P \left( (2x)_m \in \eta_m  \right)  } \Big[ 1 + O\big(d_x^{-c}m^{-\delta}\big) \Big].
  \end{equation}
  It follows from~\eqref{eq:g-eq} and~\eqref{eq:g-def} that
  $  \frac{g(2x)}{g(x)}  = \frac{g(4x)}{g(2x)}
    $.
  Note that the argument above relies on the scaling invariance of $\KK$. Then, replacing the blow-up by a factor of $2$ with a blow-up by $s > 0$, we get that
  \[
    \frac{g(2x)}{g(x)}  = \frac{g(2sx)}{g(sx)} \qquad \text{for all }s > 0.
  \]
 On the other hand, a similar argument used to prove  \eqref{eq:funceq} gives that for each $r \in [0,1]$
 $$ \frac{g \big( 2^{-r} \cdot 2^{-n} \hat{x}\big)}{g (2^{-n} \hat{x})} = 2^{r (3-\beta)} \Big[ 1+ O (2^{-\delta n}) \Big], \ \ \ \ \text{as} \  \ n \to  \infty.$$
  Combining these estimates, we obtain~\eqref{eq:const-conv} and hence~\eqref{eq:asymp0}.

  Given~\eqref{eq:one-pointb1}, a similar argument holds for the asymptotic~\eqref{eq:asymp1} as $\vert x \vert \rightarrow 1$. 
\end{proof}

\section{Two-point estimates} \label{se:twopoint}

This section focuses on estimates related to the two-point hitting probability.
First, we present an $L^2$-estimate in Proposition~\ref{prop:3.0}, which depends on the off-diagonal two-point estimate in Proposition~\ref{prop:crux2pt}. In Subsection~\ref{sec:two-proof}, we proceed to prove Theorem~\ref{thm:two-point}.

\subsection{An \texorpdfstring{$L^2$}{L2} estimate}

Consider a fixed $V\in\cD^o$ and recall Proposition \ref{prop:generalMC} (with $A$, $\delta$, $d$ therein as $\KK$, $\beta$, $3$ resp.) for 
\begin{equation}
J_s(z)\coloneqq 2^{(3-\beta)s}1_{z\in B(\KK,2^{-s})},
\end{equation}
the renormalized indicator function of the blow-up of $\KK$.  Write $$J_s(V)=\int_V J_s(z) dz$$ for the total $2^{-s}$-content of $\KK$ in $V$.

\begin{prop} \label{prop:3.0}
  Let $V\in \cD^o$ be a dyadic box.
  For any $s\geq 100$ and $s'\in [s,s+1]$, 
  \begin{equation}\label{EQ2}
  E\left[(J_s(V)-J_{s'}(V))^2\right]=O_V\left(2^{-c s^{1/2}}\right).
  \end{equation}
\end{prop}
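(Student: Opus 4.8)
The strategy is the standard $L^2$-expansion: write the square as a double integral and split according to the distance between the two sample points. Concretely, expanding the square gives
\[
E\left[(J_s(V)-J_{s'}(V))^2\right]
=\int_V\int_V E\Big[\big(J_s(z)-J_{s'}(z)\big)\big(J_s(w)-J_{s'}(w)\big)\Big]\,dz\,dw,
\]
and for each of the four resulting product terms $E[J_a(z)J_b(w)]$ with $a,b\in\{s,s'\}$ we want to invoke the asymptotic-independence/decoupling machinery. The diagonal regime $|z-w|\le 2^{-s^{1/4}}$ (say) is handled by a crude bound: there one uses the up-to-constant two-point estimate of Proposition~\ref{prop:uptocb2} (passed to the continuum, or the one-point bound of Proposition~\ref{prop:uptocb1}) to see that each $E[J_a(z)J_b(w)]$ is $O_V(|z-w|^{-(3-\beta)})$, and since $3-\beta<2$ the integral of $|z-w|^{-(3-\beta)}$ over the small set $\{|z-w|\le 2^{-s^{1/4}}\}$ is $O_V(2^{-c s^{1/4}})$, easily absorbed into the claimed error. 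The off-diagonal regime $|z-w|>2^{-s^{1/4}}$ is where the cancellation between the four terms must be extracted.

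For the off-diagonal part, I would first pass from the continuum hitting events $\{z\in B(\KK,2^{-a})\}$ to discrete LERW hitting events $\{S_1(z)\cap\eta_m\ne\emptyset\}$ at a well-chosen mesh $m$ (using Proposition~\ref{prop:Hanyscale} or the continuity result Proposition~\ref{prop:continuity} to control the discretization cost), so that Proposition~\ref{prop:decoup} applies with $k\asymp s^{1/4}$ and boxes of side $2^{-a}$ around $z,w$. The decoupling identity~\eqref{eq:ppbb} then writes
\[
P\big(B(z,2^{-a})\cap\eta_m\ne\emptyset,\ B(w,2^{-b})\cap\eta_m\ne\emptyset\big)
=\big[1+O(2^{-k^2})\big]\,P(z_m,w_m\in\eta_m)\,a_m(B(2^{-a}))\,a_m(B(2^{-b})),
\]
so that $E[J_a(z)J_b(w)]$ factorizes, up to a multiplicative error $1+O(2^{-c s^{1/2}})$, as (two-point function)$\times$(local factor at $z$)$\times$(local factor at $w$), where the local factors are governed by $A_m$ from Corollary~\ref{cor:asymp}. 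Crucially, by Corollary~\ref{cor:asymp}, $2^{(3-\beta)a}a_m(B(2^{-a}))/m^{3-\beta}\to f(2^{-a})2^{(3-\beta)a}$ and the latter equals $c(1+O(2^{-\delta a}))$; since $a,b\in\{s,s'\}$ differ by at most $1$, the renormalized local factors at $z$ (resp. $w$) for exponents $s$ and $s'$ agree up to $1+O(2^{-\delta s})$. Therefore each of the four terms $E[J_a(z)J_b(w)]$, after renormalization, equals the \emph{same} quantity $G(z,w)$ (the continuum two-point function times a universal constant) up to a relative error $O(2^{-c s^{1/8}})$ — and the alternating signs $(+,-,-,+)$ make the leading pieces cancel, leaving $|E[(J_s(z)-J_{s'}(z))(J_s(w)-J_{s'}(w))]|\le O_V(2^{-c s^{1/8}})\,|z-w|^{-(3-\beta)}$. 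Integrating $|z-w|^{-(3-\beta)}$ over $V\times V$ gives a finite $O_V(1)$ constant, yielding the bound.

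\textbf{Main obstacle.} The delicate point is bookkeeping the error terms so that the final exponent $s^{1/8}$ comes out: the decoupling of Proposition~\ref{prop:decoup} requires $|z-w|\ge 2^{-k^2}$ and $m\ge 2^{k^4}$, so on the regime $|z-w|>2^{-s^{1/4}}$ one is forced to take $k$ of order $s^{1/8}$ (so that $k^2\lesssim s^{1/4}$), which is exactly where the $2^{-s^{1/8}}$ loss enters; one must also check that choosing $m$ a suitable power of $2$ makes the discretization error (from Proposition~\ref{prop:Hanyscale} / Proposition~\ref{prop:continuity}) and the gap between $a_m(\cdot)/m^{3-\beta}$ and its $m\to\infty$ limit $f(\cdot)$ both smaller than $2^{-c s^{1/8}}$, uniformly over $z,w\in V$. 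A secondary technical nuisance is that $J_s$ uses closed $2^{-s}$-neighbourhoods of the continuum set $\KK$ whereas the discrete estimates are for $\eta_m$ hitting balls; reconciling these requires the continuity statement Proposition~\ref{prop:continuity} (so that hitting $\bar B$ and hitting $B^\circ$ have the same probability in the limit) together with a monotonicity sandwich between meshes $m$ slightly above and below $2^{s}$. Once these uniformities are in hand, the integration over the diagonal and off-diagonal regions is routine.
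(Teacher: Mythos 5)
Your proposal is correct and follows essentially the same route as the paper: the paper also splits the double integral by $|z-w|$ (using a near/on-diagonal split handled by the up-to-constant estimates of Propositions~\ref{prop:uptocb1} and~\ref{prop:uptocb2}), and proves the off-diagonal cancellation (Proposition~\ref{prop:crux2pt}) by passing to the discrete via Proposition~\ref{prop:continuity}, decoupling with Proposition~\ref{prop:decoup} (it uses~\eqref{eq:bpbb} with one factor frozen rather than~\eqref{eq:ppbb}, an immaterial difference), and invoking the sharp local asymptotics of Corollary~\ref{cor:asymp} so that the $s$ and $s'$ terms share the same leading factor. Your only slip is in the bookkeeping of $k$: the constraints force $k$ to be \emph{at least} of order $s^{1/8}$ (larger $k$, as in the paper's choice $k\asymp s^{1/4}$, is allowed and gives decoupling error $2^{-k^2}=O(2^{-\sqrt{s}})$), so the stated exponent $s^{1/8}$ is simply a weakened form of the bound rather than a genuine loss at that step; this does not affect the validity of the argument.
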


\begin{proof}[Proof of Proposition \ref{prop:3.0}]

We decompose the integral in \eqref{EQ2} into three parts: off-diagonal, near-diagonal and on-diagonal. Writing 
\begin{equation} \label{eq:defK}
K=K(z,w,s,s')=\big(J_s(z)-J_{s'}(z)\big)\big(J_s(w)-J_{s'}(w)\big)
\end{equation}
for short, we have
\begin{equation}
\begin{split}
E\left[(J_s(V)-J_{s'}(V))^2\right]
=\;&\iint_{z,w\in V, |z-w|\geq 2^{-\sqrt{s}/100}} E[K]dzdw\\
+\;&\iint_{z,w\in V, 100\cdot 2^{-s}\leq|z-w|\leq 2^{-\sqrt{s}/100}} E[K]dzdw\\
+\;&\iint_{z,w\in V, |z-w|\leq 100\cdot 2^{-s}} E[K]dzdw\\
\coloneqq \;&{\rm I}+{\rm II}+{\rm III}.
\end{split}
\end{equation}

We deal with ${\rm I}$, the crucial off-diagonal integral, with a two-point estimate in Proposition~\ref{prop:crux2pt}.
We postpone the proof of this proposition to Subsection~\ref{se:cruxproof} and turn to the near-diagonal term ${\rm II}$. It follows from Proposition \ref{prop:uptocb2}, the up-to-constant two-point estimates that (recall that $s'\in [s,s+1]$)
\begin{equation}\label{eq:II}
\begin{split}
\big|{\rm II}\big|\;&\leq \iint_{z,w\in V, 100\cdot 2^{-s}\leq|z-w|\leq 2^{-\sqrt{s}/100}} E\Big[\big(J_s(z)+J_{s'}(z)\big)\big(J_s(w)+J_{s'}(w)\big)\Big]dzdw\\
&\leq c \int_V \int_{100\cdot 2^{-s}}^{2^{-\sqrt{s}}} \int_{\partial B(z,r)} \left(2^{(3-\beta)s}\right)^2 \cdot\left(\frac{2^{-2s}}{r}\right)^{3-\beta} \sigma_r(dw)drdz \\& \leq C_V 2^{-(\beta-1)\sqrt{s}/100},
\end{split}
\end{equation}
where $\sigma_r(\cdot)$ is the surface measure on $\partial B(z,r)$. Finally, the on-diagonal term ${\rm III}$ can be controlled easily by a naive upper bound $J_{s}(w)\leq 2^{(3-\beta)s}$ and the one-point estimate Proposition \ref{prop:uptocb1}. We obtain that:
\begin{equation}\label{eq:III}
\big|{\rm III}\big| \leq \int_V \int_{B(z,100\cdot 2^{-s})} E\big[J_s(z)\big] 2^{(3-\beta)s} dwdz\leq C_V 2^{-\beta s}.
\end{equation}
The inequality \eqref{EQ2} now follows from combining \eqref{EQ1}, \eqref{eq:II} and \eqref{eq:III}.\end{proof}

\subsection{Off-diagonal two-point estimate} \label{se:cruxproof}

This subsection is dedicated to the proof of the crucial two-point estimate in Proposition~\ref{prop:3.0}.

\begin{prop}\label{prop:crux2pt}
  Let $V \in \cD^o$ be a dyadic box. 
  For any $s\geq 1$, any $z,w\in V$ such that $|z-w|\geq 2^{-\sqrt{s}/100}$, and all $s'\in[s,s+1]$, there exist $\delta,c_V>0$ such that
  \begin{equation}\label{EQ1}
    \left|E\big[K(z,w,s,s')\big]\right|\leq c_V 2^{-\delta \sqrt{s}}, 
  \end{equation}
  where $K(z,w,s,s')=\big(J_s(z)-J_{s'}(z)\big)\big(J_s(w)-J_{s'}(w)\big)$.
\end{prop}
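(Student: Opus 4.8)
\emph{Proof proposal.} I would follow the route sketched in the proof overview: transfer the estimate to the discrete model by means of the continuity result, use the two-point asymptotic independence to factor each ball-hitting probability into one global and two local factors, and then feed in the sharp one-point asymptotics so that the four terms making up $K$ cancel to leading order. Writing $a,b$ for indices ranging over $\{s,s'\}$, the starting point is the expansion
\[
E\big[K(z,w,s,s')\big]=\sum_{a,b}(-1)^{1_{\{a\neq s\}}+1_{\{b\neq s\}}}\,2^{(3-\beta)(a+b)}\,P\big(\KK\cap B(z,2^{-a})\neq\emptyset,\ \KK\cap B(w,2^{-b})\neq\emptyset\big).
\]
By Proposition~\ref{prop:continuity} the event that a compact set meets $\partial B(y,\rho)$ without entering $B^\circ(y,\rho)$ is $\KK$-null, so each hitting event above is a continuity set for the law of $\KK$; combined with $\eta_m\xrightarrow{\rm w}\KK$ (Corollary~\ref{cor:wCONV}) this gives, for every fixed $a,b$,
\[
P\big(\KK\cap B(z,2^{-a})\neq\emptyset,\ \KK\cap B(w,2^{-b})\neq\emptyset\big)=\lim_{m\to\infty}P\big(\eta_m\cap B(z_m,2^{-a})\neq\emptyset,\ \eta_m\cap B(w_m,2^{-b})\neq\emptyset\big),
\]
$z_m,w_m$ being lattice approximations of $z,w$. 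So it suffices to control the discrete probabilities uniformly in large $m$ and then pass to the limit.

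Next I would set $k=\lfloor s^{1/4}\rfloor$, so that $k^4\le s$ and $k^2\ge(s^{1/4}-1)^2\ge\sqrt s/3$ for $s\ge 100$; then $|z-w|\ge 2^{-\sqrt s/100}\ge 2^{-k^2}$ and $B(0,2^{-a})\subset[-2^{-k^4},2^{-k^4}]^3$ for $a\in\{s,s'\}$. Proposition~\ref{prop:decoup}, whose proof in~\cite{Natural} is local (hence applicable to separated bulk points at scale $k$ without requiring them to lie in a common dyadic box of that scale), then gives, for $m\ge 2^{k^4}$,
\[
P\big(\eta_m\cap B(z_m,2^{-a})\neq\emptyset,\ \eta_m\cap B(w_m,2^{-b})\neq\emptyset\big)=P(z_m,w_m\in\eta_m)\,\big[1+O(2^{-\sqrt s/3})\big]\,A_m(2^{-a})\,A_m(2^{-b}),
\]
with $A_m$ as in Corollary~\ref{cor:asymp}. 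By that corollary (together with the behaviour of $f$ near $0$, which comes from Proposition~\ref{one-ball-re} and Theorems~\ref{thm:oneptnew},~\ref{thm:oneballnew}), $r_m(a):=2^{(3-\beta)a}A_m(2^{-a})/m^{3-\beta}$ converges as $m\to\infty$ to $r(a)=c_f[1+O(2^{-\delta a})]$ for universal $c_f,\delta>0$; in particular $r(a)$ is bounded away from $0$ and $\infty$ for $a\ge 100$, and $|r(s)-r(s')|=O(2^{-\delta s})$ since $s\le s'\le s+1$. Finally, the up-to-constant two-point estimate (Proposition~\ref{prop:uptocb2}) gives $\pi_m:=m^{2(3-\beta)}P(z_m,w_m\in\eta_m)\le C_V\,|z-w|^{-(3-\beta)}$ for $m$ large.

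Now I would multiply the last display by $2^{(3-\beta)(a+b)}$ and sum over $a,b\in\{s,s'\}$ with the signs of the expansion. Since $2^{(3-\beta)(a+b)}P(\eta_m\cap B(z_m,2^{-a})\neq\emptyset,\eta_m\cap B(w_m,2^{-b})\neq\emptyset)=\pi_m[1+O(2^{-\sqrt s/3})]r_m(a)r_m(b)$ and $\sum_{a,b}(-1)^{1_{\{a\neq s\}}+1_{\{b\neq s\}}}r_m(a)r_m(b)=(r_m(s)-r_m(s'))^2$, the sum collapses to $\pi_m\big[(r_m(s)-r_m(s'))^2+R_m\big]$ with $|R_m|\le C\,2^{-\sqrt s/3}\max_a r_m(a)^2=O(2^{-\sqrt s/3})$. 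Letting $m\to\infty$ — the left side tends to $E[K]$ by the first step, $\pi_m$ is bounded (it need not converge), and $r_m(a)\to r(a)$ — we obtain
\[
|E[K]|\le C_V\,|z-w|^{-(3-\beta)}\Big(O(2^{-2\delta s})+O(2^{-\sqrt s/3})\Big)\le C_V\,2^{(3-\beta)\sqrt s/100}\Big(O(2^{-2\delta s})+O(2^{-\sqrt s/3})\Big),
\]
using $|z-w|\ge 2^{-\sqrt s/100}$. Since $\beta\in(1,5/3]$ forces $3-\beta<2<3$, the exponent $\tfrac{\sqrt s}{3}-\tfrac{(3-\beta)\sqrt s}{100}>0$, so $|E[K]|\le c_V\,2^{-\delta'\sqrt s}$ for a suitable $\delta'>0$ once $s$ is large; for the remaining bounded range of $s$ the crude bound $|K|\le 2^{2(3-\beta)(s+1)}$ (uniformly bounded there) suffices after enlarging $c_V$.

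The step I expect to be the main obstacle is this cancellation carried out \emph{without} Theorem~\ref{thm:two-point} (which is proved only afterwards), so that the global factor $m^{2(3-\beta)}P(z_m,w_m\in\eta_m)$ cannot be assumed to converge: the remedy is to perform the algebraic cancellation at the discrete level, where the factorization in Proposition~\ref{prop:decoup} is exact up to the controlled multiplicative error $1+O(2^{-\sqrt s/3})$, and to use only the available up-to-constant bound on that factor. A secondary technical point is that Proposition~\ref{prop:decoup} must be invoked at the $s$-dependent scale $k\asymp s^{1/4}$ rather than at the fixed scale of $V$, which is the reason for arranging $k^4\le s$ (so that the balls of radius $2^{-s}$ fit in the admissible boxes) together with $k^2\ge\sqrt s/100$ (so that $z,w$ are separated at that scale); one should also check, as in the proof of Proposition~\ref{prop:continuity}, that the open/closed ball distinction and the position of $z,w$ relative to $\partial V$ cause no trouble, e.g.\ by working in a slightly enlarged box.
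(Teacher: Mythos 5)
Your proposal is correct and takes essentially the same route as the paper: pass the estimate to the discrete via Proposition~\ref{prop:continuity} (as the paper does through Corollary~\ref{cor:twoptapprox} and Proposition~\ref{prop:2preduction}), apply the decoupling of Proposition~\ref{prop:decoup} at a scale $k\asymp s^{1/4}$, and let the sharp reference-ball asymptotics (Proposition~\ref{one-ball-re}, Corollary~\ref{cor:asymp}) produce the cancellation between $s$ and $s'$, with the global two-point factor controlled only up to constants and the loss $|z-w|^{-(3-\beta)}\le 2^{(3-\beta)\sqrt{s}/100}$ absorbed at the end. The only difference is bookkeeping: you use the symmetric factorization \eqref{eq:ppbb} and the identity $\sum\pm r_m(a)r_m(b)=(r_m(s)-r_m(s'))^2$, whereas the paper factors out a single local term via \eqref{eq:bpbb} against the mixed ball--point probability $Q$ and handles the replacement of $w$ by the lattice point $w_m$ with the adjusted radii $s_0,s_1$.
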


As the understanding of the scaling limit of LERW is very limited and the essential tools are lacking in the continuum, rather than dealing with this two-point estimate directly in the continuum, we actually pass the analysis to the discrete.
Let
\[
  J_s^m(z)\coloneqq 2^{(3-\beta)s}1_{z\in B(\eta_m ,2^{-s})}
\]   
stand for the renormalized indicator function in the discrete. 
Proposition \ref{prop:continuity} gives us the following corollary on convergence of $J_s^m$ to $J_s$ in expectation.

\begin{cor} \label{cor:twoptapprox}
  For $z\in\DD$ and $s>0$ such that $B(z,2^{-s})\subset \DD$, it follows that
  \begin{equation} \label{eq:twoptapprox1}
  E\big[J^{m}_{s}(z)\big] \to E\big[J_{s}(z)\big]\mbox{ as $m\to\infty$}.
  \end{equation}
  Moreover, one has that for $z,w\in\DD$ and $s,s'>0$ such that $B(z,2^{-s})\cap B(w,2^{-s'})=\emptyset$ and $B(z,2^{-s}), B(w,2^{-s'})\subset \DD$, 
  \begin{equation} \label{eq:twoptapprox2}
  E\Big[J^{m}_{s}(z)J^{m}_{s'}(w)\Big] \to E\Big[J_{s}(z)J_{s'}(w)\Big]\mbox{ as $m\to\infty$}.
  \end{equation}
\end{cor}

\begin{proof}[Proof of Corollary~\ref{cor:twoptapprox}]
  This is a simple application of the fact that $\eta_m$ converges to $\KK$ in the $d_{\text{Haus}}$-metric.
\end{proof}

Thanks to Corollary~\ref{cor:twoptapprox}, we can conduct much of the analysis of $K(z,w,s,s')$ in the discrete. Indeed, by Corollary~\ref{cor:twoptapprox}, to prove Proposition~\ref{prop:crux2pt} it suffices to prove the following estimate for the renormalized indicator function in the discrete space. For the purpose of proving Proposition~\ref{prop:crux2pt}, it suffices to work with dyadic scales, since Proposition~\ref{prop:generalMC} only requires understanding the one and two point Green's function for a subsequence of scales tending to infinity.

\begin{prop}\label{prop:2preduction}
Let $V \in \cD^o$ be a dyadic box. Take $s \geq 1$, $s' \in [s, s+1]$ and let $z, w \in V$ satisfying $\vert z - w \vert \geq 2^{- \sqrt{s} / 100}$ (the assumption in Proposition \ref{prop:crux2pt}).  
Then there exists  $M_2(s)>0$, such that for all  $m=2^n\geq M_2$ for some $n\in\Zp$, one has
\begin{equation}\label{eq:2preduction}
\left|E\Big[\big(J^{m}_s(z)-J^{m}_{s'}(z)\big)\big(J^{m}_s(w)-J^{m}_{s'}(w)\big)\Big]\right|\leq O_{V}\left(2^{-\delta \sqrt{s}}\right).
\end{equation}
\end{prop}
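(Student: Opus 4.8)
The plan is to expand $E[K]$ into its four two-point hitting probabilities, to factor each of them through the decoupling identity of Proposition~\ref{prop:decoup}, applied at a mesoscopic scale $k=k(s)$, into a global factor not depending on $(s,s')$ times a product of reference one-ball probabilities times $1+O(2^{-k^2})$, and then to use that these four global factors cancel with signs $+,-,-,+$, leaving only error terms that we bound with Proposition~\ref{one-ball-re} and Proposition~\ref{prop:uptocb2}.

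Put $A_t^z:=\{\eta_m\cap B(z,2^{-t})\ne\emptyset\}$, so $E[J_t^m(z)J_{t'}^m(w)]=2^{(3-\beta)(t+t')}P(A_t^z\cap A_{t'}^w)$ and
\[
E[K]=2^{2(3-\beta)s}P(A_s^z\cap A_s^w)-2^{(3-\beta)(s+s')}\big(P(A_s^z\cap A_{s'}^w)+P(A_{s'}^z\cap A_s^w)\big)+2^{2(3-\beta)s'}P(A_{s'}^z\cap A_{s'}^w).
\]
Since $z,w\in V$ forces $2^{-\sqrt s/100}\le|z-w|\le\diam(V)$, we may assume $s$ as large as we like in terms of $V$. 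Fix $k=k(s)\asymp s^{1/4}$ with $k^4\le s-1$, so $k^2\ge\tfrac12\sqrt s\ge\tfrac1{100}\sqrt s$. Each $A_t^z$ has the form $\{S(z_m)\cap\eta_m\ne\emptyset\}$ for a lattice set $S$ which, once $m=2^n\ge 2^s$, lies in $[-2^{-k^4},2^{-k^4}]^3$; applying Proposition~\ref{prop:decoup} at level $k$ (legitimate since $z,w$ lie in the fixed box $V$, hence away from $0$ and $\partial\DD$, while $|z-w|\ge 2^{-\sqrt s/100}\ge 2^{-k^2}$) yields, for $a,b\in\{s,s'\}$,
\[
P(A_a^z\cap A_b^w)=\big[1+O(2^{-k^2})\big]\,P(z_m,w_m\in\eta_m)\,a_m(S_1)\,a_m(S_2),
\]
with $S_1,S_2$ the shifted discretized balls of radii $2^{-a}$ and $2^{-b}$. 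Let $R_t:=2^{(3-\beta)t}P(\eta_m\cap B(\hat{x},2^{-t})\ne\emptyset)$ and $Q_0:=P(z_m,w_m\in\eta_m)/P(\hat{x}_m\in\eta_m)^2$. By the definition of $a_m$ and Proposition~\ref{prop:continuity} (which lets us absorb the $O(m^{-1})$ displacement of the centre into a factor $1+o_m(1)$, negligible for $M_2(s)$ large), $2^{(3-\beta)t}a_m(S)=R_t/P(\hat{x}_m\in\eta_m)$ for the discretized ball $S$ of radius $2^{-t}$; hence $E[J_a^m(z)J_b^m(w)]=Q_0R_aR_b[1+O(2^{-k^2})]$. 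Since $R_a,R_b=O(1)$ and the four products $R_aR_b$ recombine to $(R_s-R_{s'})^2$,
\[
E[K]=Q_0\Big[(R_s-R_{s'})^2+O(2^{-k^2})\Big].
\]

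It remains to bound the two surviving contributions. By Proposition~\ref{one-ball-re} applied at radii $2^{-s}$ and $2^{-s'}$ (valid once $M_2(s)\ge\max_{t\in\{s,s+1\}}m_0(2^{-t})$), $R_t=c_2\big(1+O(2^{-\delta t})\big)$, so with $s'\in[s,s+1]$ one gets $(R_s-R_{s'})^2=O(2^{-2\delta s})$. By the up-to-constant two-point estimate of Proposition~\ref{prop:uptocb2}, $E[J_a^m(z)J_b^m(w)]\le C_V|z-w|^{-(3-\beta)}$, and comparing with the lower bound $E[J_a^m(z)J_b^m(w)]\ge c\,Q_0$ (valid for $s$ large) gives $Q_0\le C_V|z-w|^{-(3-\beta)}\le C_V\,2^{(3-\beta)\sqrt s/100}$. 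Consequently
\[
|E[K]|\le C_V\,2^{(3-\beta)\sqrt s/100}\big(2^{-2\delta s}+2^{-k^2}\big).
\]
As $k^2\ge\tfrac12\sqrt s$ and $(3-\beta)/100<\tfrac12$, the $2^{-k^2}$ term contributes at most $C_V2^{-\sqrt s/4}$; as $2\delta s$ eventually dominates $\tfrac{3-\beta}{100}\sqrt s$, the $2^{-2\delta s}$ term contributes at most $C_V2^{-\delta\sqrt s}$ for all $s\ge s_1(V,\beta,\delta)$. The remaining finitely many (given $V$) values of $s$ are swallowed by $O_V(\cdot)$ through the crude bound $|E[K]|\le E\big[(J_s^m(z)+J_{s'}^m(z))(J_s^m(w)+J_{s'}^m(w))\big]\le C_V|z-w|^{-(3-\beta)}$, which is bounded there. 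This proves \eqref{eq:2preduction} (with $\delta$ replaced by $\min\{\delta,1/4\}$).

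The main obstacle is the simultaneous calibration of $k(s)$: it must meet the hypotheses of Proposition~\ref{prop:decoup} ($k^4\lesssim s$ and $m\ge 2^{k^4}$, which is why $M_2(s)$ must grow with $s$) and the separation condition $2^{-k^2}\le|z-w|$ (affordable only because the hypothesis gives $|z-w|\ge 2^{-\sqrt s/100}$), while still keeping the decoupling error $2^{-k^2}$ small enough to beat the prefactor $Q_0$, which truly grows---as fast as $2^{(3-\beta)\sqrt s/100}$---because $z$ and $w$ may be as close as $2^{-\sqrt s/100}$. The value $k\asymp s^{1/4}$ (hence $k^2\asymp\sqrt s$) is exactly what makes these compatible; once it is fixed, the one-ball error $O(2^{-\delta s})$ from Proposition~\ref{one-ball-re}, even with an unspecified $\delta>0$, survives multiplication by $Q_0$ with room to spare, being exponentially small in $s$ against a mere $2^{c\sqrt s}$ prefactor.
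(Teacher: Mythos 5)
Your proof is correct and takes essentially the same route as the paper: apply the asymptotic-independence decoupling of Proposition~\ref{prop:decoup} at an intermediate scale $k\asymp s^{1/4}$ (so the error $2^{-k^2}\approx 2^{-\sqrt{s}/2}$ beats the up-to-constant two-point prefactor $O_V(2^{(3-\beta)\sqrt{s}/100})$ from Proposition~\ref{prop:uptocb2}), and control the radius-dependence of the local factors with the sharp one-ball asymptotics of Proposition~\ref{one-ball-re} (Corollary~\ref{cor:asymp}). The only cosmetic differences are that you decouple all four ball–ball terms via \eqref{eq:ppbb} and extract the square $(R_s-R_{s'})^2$, whereas the paper first reduces to single differences $E[J^m_S(z)(J^m_s(w)-J^m_{s'}(w))]$ and uses the ball–point version \eqref{eq:bpbb}, handling the snap-to-lattice/centre-displacement issue with explicit sandwich radii $2^{-s}\pm 2^{-s^2}$ rather than your qualitative $o_m(1)$ absorbed into $M_2(s)$ (which is legitimate, since $M_2$ may depend on the fixed data).
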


\begin{proof}
Fix $x=(1/2,0,0)$ as the reference point. To prove \eqref{eq:2preduction}, it suffices to prove that for sufficiently large $m$, and for $S=s,s'$,
\begin{equation}\label{eq:1vs2}
\Big|E\left[J^{m}_{S}(z)\big(J^{m}_{s}(w)-J^{m}_{s'}(w)\big)\right]\Big|=O_V\left(2^{-\delta \sqrt{s}}\right).
\end{equation}
We will work with $S=s$ as the other case follows almost identically. 

Our strategy to control \eqref{eq:1vs2} is an application of \eqref{eq:bpbb}.
We first need to change $w$ to $w_m$, which is the closest point in $m^{-1} \mathbb{Z}^3$ to $w$. 
Let $s_1$, $s_0$ be such that
\[
  2^{-s_1}=2^{-s}-2^{-s^2}, \quad  2^{-s_0}=2^{-s}+2^{-s^2}
\]
and define $s'_0$, $s'_1$ similarly.
The Taylor expansion of  $\log (1 + x)$ around $x = 0$ shows that $s_1-s, s-s_0 \asymp 2^{s-s^2}\leq C2^{-s^2/2}$, hence
\begin{equation}\label{eq:asymp2power}
  \max\Big\{1-2^{-(3-\beta) (s_1-s)}, 2^{(3-\beta)(s-s_0)}-1\Big\} \leq C2^{-s^2/2}.
\end{equation}
Let $M_0 > 0$ be large enough so that, for $2^n \geq M_0$, Corollary~\ref{cor:asymp} gives bounds from the limit in~\eqref{eq:asymp} for the exponents  $s = s_0$, $s_1$, $s_1'$ and $s_0'$ (see \eqref{eq:cruxlast2} below).
Recall $M_1 > 0 $ from Proposition~\ref{prop:uptocb2} and set $M_2 \geq \max \{ 2^{s^2}, M_0, M_1 \}$. With this choice, for $m\geq M_2 \geq 2^{s^2}$,
\begin{equation}
2^{-(3-\beta) (s_1-s)}J^{m}_{s_1}(w_m) \leq J^{m}_{s}(w)\leq 2^{(3-\beta)(s-s_0)}   J^m_{s_0}(w_m).
\end{equation}
Therefore, with \eqref{eq:asymp2power}, 
\begin{equation}\label{eq:changew1}
\max_{i=0,1}\Big|E\left[J^{m}_{s}(z)\big(J^{m}_{s}(w)-J^{m}_{s_i}(w_m)\big)\right]\Big| \leq  C2^{-s^2/2} E\big[J^{m}_{s}(z)J^{m}_{s}(w_m)\big] .
\end{equation}
Similarly, 
\begin{equation}\label{eq:changew2}
  \max_{i=0,1}\Big|E\left[J^{m}_{s}(z)\big(J^{m}_{s'}(w)-J^{m}_{s'_i}(w_m)\big)\right]\Big| \leq  C2^{-s^2/2} E\big[J^{m}_{s}(z)J^{m}_{s'}(w_m)\big].
\end{equation}
Recall our assumption that  $|z-w|\geq 2^{-\sqrt{s}/100}$ and  $ |w-w_m| < 2^{-s^2} $ then $|z-w_m|\geq \frac{1}{10} 2^{-\sqrt{s}/50}$. 
By the up-to-constant two-point estimate \eqref{eq:2b}
\begin{equation}\label{eq:changew3}
E\big[J^{m}_{s}(z)J^{m}_{s}(w_m)\big] ,E\big[J^{m}_{s}(z)J^{m}_{s'}(w_m)\big] =O_V\left( 2^{(3-\beta)\sqrt{s}/50}\right).
\end{equation}
Returning to \eqref{eq:1vs2}, by the controls \eqref{eq:changew1}-\eqref{eq:changew3} above, there exists some $c>0$ such that
\begin{align}\label{eq:pingpong}
&\Big|E\left[J^{m}_{s}(z)\big(J^{m}_{s}(w)-J^{m}_{s'}(w)\big)\right]\Big| \notag \\ &\leq\max_{i=0,1}\Big|E\left[J^{m}_{s}(z)\big(J^{m}_{s_i}(w_m)-J^{m}_{s'_{1-i}}(w_m)\big)\right]\Big|+O_V \left(2^{-cs^2}\right).
\end{align}

We are now ready to apply \eqref{eq:bpbb} to the first term on the RHS of \eqref{eq:pingpong}: it follows that
\begin{equation}
E\left[J^{m}_{s}(z)J^{m}_{\tilde{s}}(w_m)\right]=2^{(3-\beta)(s+\widetilde{s})} \left(1+O\big(2^{-\sqrt{s}}\big)\right)A(\widetilde{s})Q,
\end{equation}
for $\widetilde{s}= s_0,s_1,s'_0,s'_1$, 
where we write (recall that $m=2^n$ and the definition of $a_n(\cdot)$ from \eqref{eq:anSdef})
$$
A(\widetilde{s})=a_n\left(B(2^{-\widetilde{s}})\cap 2^{-n}\Z^3\right)
\qquad
\mbox{and}
\qquad
Q= P\big(B(z,2^{-s})\cap \eta_m\neq\emptyset, w_m\in\eta_m\big)
$$
for short. By the assumption above that $ m  \geq M_2 \geq M_1 $ and the up-to-constant estimate on $Q$, see \eqref{eq:1b1p}, it follows that 
\begin{equation}
Q = O_V\big(2^{-(3-\beta)(s-\sqrt{s}/100+n)}\big).
\end{equation}
Hence  for $i=0,1$,
\begin{equation}\label{eq:cruxapplied}
\begin{split}
&\Big|E\left[J^{m}_{s}(z)\big(J^{m}_{s_i}(w_m)-J^{m}_{s'_{1-i}}(w_m)\big)\right]\Big|\\
\leq\;& 2^{(3-\beta)s}\left[ \Big(1+O\big(2^{-\sqrt{s}}\big)\Big)A(s_i)2^{(3-\beta)s_i}- \Big(1+O\big(2^{-\sqrt{s}}\big)\Big)A(s'_{1-i})2^{(3-\beta)s'_{1-i}} \right]Q\\
\leq\;&  \Big[{\rm I}     + O_V\big(2^{-\sqrt{s}}\big) {\rm II} \Big] O_V\Big( 2^{(3-\beta)(\sqrt{s}/100-n)}\Big),
\end{split}
\end{equation}
where ${\rm I}=\big|A(s_i) 2^{(\beta-3)s_i} -A(s'_{1-i})2^{(\beta-3)s'_{1-i}}\big|$ and ${\rm II}=A(s_i)2^{(\beta-3)s_i}+A(s'_{1-i})2^{(\beta-3)s'_{1-i}}$.

By one-point estimates \eqref{eq:sharpone} and \eqref{eq:oneballd}, we have 
$$ A (s_{i} ) \asymp \frac{2^{- (3-\beta) s_{i} }}{2^{- (3 -\beta) n}} \ \ \ \text{ and }  \ \ \  A (s_{1-i}' ) \asymp \frac{2^{- (3-\beta) s_{1-i}' }}{2^{- (3 -\beta) n}}.$$
Therefore, since $0 < \frac{3 - \beta }{100} < \frac{3}{100}$,  it follows that 
\begin{equation}\label{eq:cruxlast1}
 O_V\Big( 2^{-\sqrt{s}+(3-\beta)(\sqrt{s}/100-n)}\Big) {\rm II} \leq O_V\big( 2^{-\delta_0 \sqrt{s}}\big)
\end{equation}
for some universal $\delta_0>0$.
We now apply \eqref{eq:asymp}, then our choice of $m \geq M_2 \geq M_0$ shows that for both $i=0,1$,
\begin{equation}\label{eq:cruxlast2}
 2^{(3-\beta)(\sqrt{s}/100-n)}{\rm I}\leq C 2^{-\delta s}
\end{equation}
for some $\delta>0$. 

Finally, the claim~\eqref{eq:1vs2} follows from inserting~\eqref{eq:cruxlast1},~\eqref{eq:cruxlast2} into~\eqref{eq:cruxapplied} and then into~\eqref{eq:pingpong}.
\end{proof}

\subsection{Proof of Theorem~\ref{thm:two-point}} \label{sec:two-proof}

We are now ready to prove the existence of the two-point Green’s function.

\begin{proof}[Proof of Theorem~\ref{thm:two-point}]
By Proposition~\ref{prop:3.0}, the limit 
\begin{align}\label{eq:two-exists}
  g (z,w) 
  &\coloneqq \lim_{s \to \infty} \mathbb{E} \left( J_s (z )J_s(w) \right) \notag \\
  &= \lim_{s \to \infty} \left(2^{2(3 - \beta)s} P \left( \KK \cap B(z , 2^{-s}) \neq  \emptyset, \,   \KK \cap B(w , 2^{-s}) \neq  \emptyset \right) \right)   
\end{align} 
exists for each $z , w \in \Omega$. 
This is because, starting from~\eqref{EQ2}, it is easy to see that for $m\in\mathbb{Z}$, $J_m(V)$ has an $L^2$- and a.s.-limit (the latter requiring the use of Chebyshev inequality and Borel-Cantelli lemma), which is also the limit if we let $r$ tend to $0$ continuously. See the proof of~\cite[Theorem 3.1]{LawRaz} for a complete argument. 

Recall that $\hat{x} = (1/2, 0 , 0)$ is a reference point and that for any $z, w \in \Omega$, 
$d_z = \min \{ \vert z \vert, 1 - \vert z \vert \}$ and $ d_{z,w} = \min \{ d_z, d_w, \vert z - w\vert \} $.
  Let us fix $z, w \in \Omega$ with $ d_{z,w} > e^{-s^2} $ and take $ 0 < r < e^{-s^4}$. 
The asymptotic independence from Proposition~\ref{prop:decoup}
implies that there exists $M_2 (r,  d_{z,w}, d_z, d_w) > 0$ such that for all $m \geq M_2$
\begin{align} \label{eq:twopt-onept}
  &\frac{ 
  P \big(  \eta_m \cap B (z_m, r)  \neq \emptyset  ,   \, \eta_m \cap B (w_m, r)  \neq \emptyset  \big) 
  }{ P \big( z_m, w_m \in \eta_m \big) } \notag
  \\ \ \ \ \ & =   \left( \frac{ P \big( \eta_m \cap B(\hat{x},r)   \neq \emptyset \big) }{P \big(\hat{x} \in \eta_{m} \big)} \right)^2  \Big[ 1 + O \big(d_{z,w}^{-c} r^{\delta} \big) \Big].
\end{align}
Proposition~\ref{prop:continuity} ensures that the numerator of the LHS of \eqref{eq:twopt-onept} can be accurately approximated by  
$$ P \Big( \KK \cap B (z,r) \neq \emptyset, \KK \cap B (w,r) \neq \emptyset \Big). $$
Replacing the numerator by the hitting probability for $\KK$ above, by using Proposition~\ref{one-ball-re} and the convergence in~\eqref{eq:two-exists}, we see that the quantity 
$$  \frac{ P \big( z_m, w_m \in \eta_m \big) }{ P \big(\hat{x} \in \eta_{m} \big)^{2} } $$
converges as $m \to \infty$. Since this quantity does not depend on $r$, we can strengthen the convergence in~\eqref{eq:two-exists} as in the proof of Theorem~\ref{thm:oneballnew}, so that 
\begin{equation} \label{eq:twop-rateProof}
  P \Big( \KK \cap B (z,r) \neq \emptyset, \KK \cap B (w,r) \neq \emptyset \Big)
    =  g (z,w) r^{2(3 - \beta)} \Big[ 1 + O \big(d_{z,w}^{-c} r^{\delta}\big) \Big],
\end{equation} 
which corresponds to \eqref{eq:twop-rate}.

On the other hand, by the inclusion-exclusion principle we can write the probability of the LERW $\eta_m$ intersecting two disjoint balls as one-point hitting probabilities:
\begin{multline} \label{eq:inclexcl}
 P  \Big( \eta_m \cap B(z,r) \neq \emptyset,  \eta_m \cap B(w,s) \neq \emptyset  \Big)\\ =  
  P \Big( \eta_m \cap B(z,r) \neq \emptyset\Big) + P \Big(\eta_m \cap B(w,s) \neq \emptyset\Big) \\  - P \Big( \eta_m \cap (B(z,r) \cup B(w,r)) \neq \emptyset \Big).
\end{multline}
Similarly for $ P (\KK \cap B (z,r) \neq \emptyset, \KK \cap B(w,r) \neq \emptyset ) $.
Each one of the terms on the right-hand side of~\eqref{eq:inclexcl} approximates the intersection probability of a ball and the scaling limit of the LERW. 
Proposition~\ref{prop:Hanyscale} implies that there exist $c > 0$ and $\delta > 0$ so that for $r \in (0,   e^{-s^4})$
\begin{equation*} 
  P \left( \eta_m \cap B(z,r) \neq \emptyset \right) 
  = P \left( \KK \cap B(z,r) \neq \emptyset \right) \Big[ 1 + O \big(d^{-c}_{z,w} m^{-\delta}\big) \Big] \qquad \text{as } m \to \infty.
\end{equation*}
The corresponding estimate also holds for the other sets in the right-hand side of~\eqref{eq:inclexcl}, so that
\begin{align*}
P \Big( \eta_m \cap B(w,r) \neq \emptyset \Big) &\simeq P \Big( \KK \cap B(w,r) \neq \emptyset \Big); \\
P \Big( \eta_m \cap (B(z,r) \cup B(w,r)) \neq \emptyset \Big) &\simeq P \Big( \KK \cap (B(z,r) \cup B(w,r)) \neq \emptyset \Big).
\end{align*} 
These latter estimates, together with~\eqref{eq:inclexcl}, imply that for $c > 0$ and $\delta > 0$ and $r \in (0,   e^{-s^4})$, 
\begin{equation}\label{eq:compDC0}
    \begin{split}
        & P  \Big( \eta_m \cap B(z,r) \neq \emptyset,  \eta_m \cap B(w,r) \neq \emptyset  \Big) \\
        = \;& P  \Big( \KK \cap B(z,r) \neq \emptyset,  \KK \cap B(w,r) \neq \emptyset  \Big) \Big[ 1 + O \big(d^{-c}_{z,w} m^{-\delta}\big) \Big] \text{ as } m \to \infty.
    \end{split}
\end{equation} 
Now, \eqref{eq:compDC0} and~\eqref{eq:twop-rateProof} imply that there exist constants $c', \delta' > 0 $ so that
\begin{equation} \label{eq:compDC} 
P  \Big( \eta_m \cap B(z,r) \neq \emptyset,  \eta_m \cap B(w,r) \neq \emptyset  \Big) = 
  g (z,w) r^{2(3 - \beta)} \Big[ 1 + O \big(d_{z,w}^{-c'} r^{\delta'}\big) \Big] .
\end{equation}
Going back to~\eqref{eq:twopt-onept}, we compare its numerator on the left-hand side with~\eqref{eq:compDC}, so that for some $c'' > 0$  
\begin{equation} \label{eq:twopt-onept1}
\frac{  g(z,w) }{ P \Big( z_m, w_m \in \eta_m \Big) }
= \Big[ 1 + O \big(d_{z,w}^{-c''}r^{\delta'}\big) \Big] \left( \frac{ P \Big(\eta_m \cap B(\hat{x},r)   \neq \emptyset\Big) }{r^{3 - \beta }P \Big(\hat{x} \in \eta_{m}\Big)} \right)^2.
\end{equation}
From~\eqref{eq:twopt-onept1} and Corollary~\ref{cor:asymp}, we obtain~\eqref{eq:twopt-green}.
\end{proof}

\section{Proof of Theorem~\ref{mainthm}.}\label{se:proof}


\subsection{Existence of Minkowski content}
In this subsection, we prove part I of Theorem \ref{mainthm}, namely the following claim.
\begin{prop} \label{prop:existenceM}
For any dyadic box $V\in \cD^o$, $\cont_{\beta}(\KK\cap V)$ a.s.\ exists.
\end{prop}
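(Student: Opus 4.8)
The plan is to deduce the statement from the $L^2$-estimate of Proposition~\ref{prop:3.0}, running the standard argument of~\cite[Theorem~3.1]{LawRaz} that also underlies Proposition~\ref{prop:generalMC}. Fix $V\in\cD_n^o$ and recall $J_s(V)=\int_V J_s(z)\,dz=2^{(3-\beta)s}\,{\rm Vol}\big(V\cap B(\KK,2^{-s})\big)$. Proposition~\ref{prop:3.0} gives $E\big[(J_s(V)-J_{s'}(V))^2\big]=O_V\big(2^{-cs^{1/8}}\big)$ for $s\ge100$, $s'\in[s,s+1]$; in particular $\{J_m(V)\}_{m\in\Zp}$ is Cauchy in $L^2$, so it converges in $L^2$ to some $J_V$. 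Since $\sum_m 2^{-cm^{1/8}}<\infty$, Chebyshev's inequality and Borel--Cantelli upgrade this to $J_m(V)\to J_V$ almost surely, and applying Proposition~\ref{prop:3.0} once more with a non-integer $s'\in[m,m+1]$ shows $J_s(V)\to J_V$ a.s.\ as $s\to\infty$ through the reals.

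The remaining step is to identify $J_V$ with $\cont_\beta(\KK\cap V)$. Here I would compare $J_s(V)=2^{(3-\beta)s}{\rm Vol}\big(V\cap B(\KK,2^{-s})\big)$ with $\cont_{\beta,2^{-s}}(\KK\cap V)=2^{(3-\beta)s}{\rm Vol}\big(B(\KK\cap\overline V,2^{-s})\big)$: the symmetric difference of $V\cap B(\KK,2^{-s})$ and $B(\KK\cap\overline V,2^{-s})$ lies in the $2^{-s}$-neighbourhood of $\KK\cap\partial V$, so the two renormalised volumes differ by at most a constant times $\cont_{\beta,2^{-s}}(\KK\cap\partial V)$. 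Now $\partial V$ is a finite union of faces, each lying in a coordinate plane, so by subadditivity of the $r$-content and Corollary~\ref{cor:plane} one has $\cont_\beta(\KK\cap\partial V)=0$ a.s., hence $\cont_{\beta,2^{-s}}(\KK\cap\partial V)\to0$. Combining with the first paragraph, $\cont_{\beta,2^{-s}}(\KK\cap V)\to J_V$ a.s.; the same comparison at radii $r\in[2^{-s-1},2^{-s}]$ (again via subadditivity of $r\mapsto{\rm Vol}(B(\KK\cap V,r))$) promotes this to convergence along all $r\to0$, so $\cont_\beta(\KK\cap V)$ exists a.s.\ and equals $J_V$.

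The analytic heart of the argument is thus already packaged into Proposition~\ref{prop:3.0} (hence into the off-diagonal estimate Proposition~\ref{prop:crux2pt} and the asymptotic-independence input Proposition~\ref{prop:decoup}), and the only fresh task is the boundary bookkeeping above; the one point that needs a little care there is promoting ``$\cont_\beta(\KK\cap\partial V)=0$'' to a uniform-in-$r$ control of $\cont_{\beta,r}(\KK\cap V)-\cont_{\beta,r}(\KK\cap\overline V)$ rather than just along the dyadic sequence, which is why one must work with the full family of radii. Equivalently, one may invoke Proposition~\ref{prop:generalMC} directly with $A=\KK$, $d=3$, $\delta=\beta$, $\eta=3-\beta$ and $U$ the open $2^{-n-1}$-neighbourhood of $\overline V$: condition~\eqref{eq:boundarycond} holds because ${\rm Vol}(B(\partial V,r))=O(r)$ gives $\cont_{5/2}(\partial V)=0$; the existence, uniform boundedness and rate of $G(z)=c_2g(z)$ come from Theorem~\ref{thm:oneballnew} and the continuity of $g$ (Theorem~\ref{thm:oneptnew}); the existence of $G(z,w)=c_1g(z,w)$ from Theorem~\ref{thm:two-point}, the bound $G(z,w)\le c_V|z-w|^{-\eta}$ from the up-to-constant estimate~\eqref{eq:cb1}, and the decay rate for $E[J_s(z)J_{s+\rho}(w)]$ in~\eqref{eq:speed} from Proposition~\ref{prop:crux2pt} by telescoping over unit scales on $\{|z-w|\ge2^{-\sqrt s/100}\}$ (in the weakened form allowed by Remark~\ref{rem:modiMC}), the near- and on-diagonal pairs being absorbed exactly as in the I/II/III split in the proof of Proposition~\ref{prop:3.0}. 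Then~\eqref{eq:JV} and~\eqref{eq:boundaryV} yield the claim.
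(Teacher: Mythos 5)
Your final ``equivalently'' paragraph is precisely the paper's proof: the paper establishes Proposition~\ref{prop:existenceM} by invoking Proposition~\ref{prop:generalMC} for $A=\KK$, $\delta=\beta$, getting the one-point conditions from Theorem~\ref{thm:oneballnew}, the existence of the two-point function from Theorem~\ref{thm:two-point} and its up-to-constant bound from \eqref{eq:cb1}; your verification of \eqref{eq:boundarycond} and of the rate conditions (in the weakened form of Remark~\ref{rem:modiMC}) is the same argument with the bookkeeping made explicit, so on this route your proposal is correct and matches the paper.

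The hands-on first route is redundant given the above, and one step in it is not correct as written: the symmetric difference of $V\cap B(\KK,2^{-s})$ and $B(\KK\cap\overline V,2^{-s})$ is \emph{not} contained in the $2^{-s}$-neighbourhood of $\KK\cap\partial V$ --- a point $z\in V$ with $\dist(z,\KK)\le 2^{-s}$ realized by a piece of $\KK$ just outside $\overline V$ lies within $2^{-s}$ of $\partial V$ and of $\KK$, but not necessarily within $2^{-s}$ of $\KK\cap\partial V$ (indeed $\KK\cap\partial V$ can be empty while the symmetric difference is not). The symmetric difference is only contained in $B(\partial V,2^{-s})\cap B(\KK,2^{-s})$, so citing Corollary~\ref{cor:plane} does not close this step; what is needed is a bound on ${\rm Vol}\big(B(\partial V,r)\cap B(\KK,r)\big)$, which follows from the one-point estimate of Proposition~\ref{prop:uptocb1} (its expectation is $O_V(r\cdot r^{3-\beta})$) together with Borel--Cantelli and monotonicity in $r$ --- and this boundary control, like the upgrade from a dyadic sequence of radii to all $r\to 0$ (which needs grids of mesh $\delta\to 0$, not the single-octave squeeze over $[2^{-s-1},2^{-s}]$, whose distortion factor $2^{3-\beta}$ does not tend to $1$), is exactly what the quoted machinery of Proposition~\ref{prop:generalMC}/\cite{LawRaz} already packages via \eqref{eq:boundaryV} and \eqref{eq:JV}. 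So either delete the first route or repair it along these lines; the conclusion stands because your second route is the paper's.
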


To confirm the existence of the Minkowski content, we apply Proposition \ref{prop:generalMC} to $\KK$.
Consider a fixed $V\in\cD^o$ and 
recall $J_s(\cdot)$ the renormalized indicator function of the blow-up of $\KK$.  Write $$J_s(V)=\int_V J_s(z) dz$$ for the total $2^{-s}$-content of $\KK$ in $V$.

Then, it is not difficult to check that all general assumptions in the statement of Proposition \ref{prop:generalMC} are satisfied. 
Since $V\in \cD^o$, there exists an open Borel set $U \subset \DD \setminus \{ 0 \}$  such that $V \subset {\rm int} (U)$ and $\dist (0, \partial \DD, U) >0$.
Hence it suffices to prove \eqref{eq:mereexistence} and \eqref{eq:speed} for the 3D LERW, namely the following conditions for  $z , w \in U$ and  $z \neq w$. 

\begin{enumerate}
  \item \label{condition:mereexistence} The limits defining the one-point function
  \[
    g(z) 
    = \lim_{s \to \infty} E \left[ J_s (z) \right] 
    = \lim_{s \to \infty}  \left( 2^{(3-\beta)s} P \left( B(z, 2^{-s}) \cap \KK  \neq \emptyset \right) \right),
  \]
  and the two-point function 
  \[ 
    g(z,w) 
    = \lim_{s \to \infty} E \left[ J_s (z) J_s (w) \right]  
  \]
  exist and are finite. Moreover, the one-point function $g$ is uniformly bounded on $U$.
  \item \label{condition:speed} There exist $c ,  b, \rho_0, u > 0$ such that for $ 0 \leq \rho \leq \rho_0 $
  \[ 
    \begin{split}
    &g(z) \leq c; \;\; g(z,w) \leq  c \, |z-w|^{-(3-\beta)};
    \;\;   \left| E [J_s(z)] - g(z)\right| \leq c \, e^{-us^b};\\
    &\left| E [J_s(z) \, J_{s+\rho}(w)] -
    g(z,w)\right| \leq  c \, |z-w|^{-(3-\beta)} \, e^{-us^b}.
    \end{split}
  \]
\end{enumerate}

\begin{proof}[Proof of Proposition~\ref{prop:existenceM}]
  By Proposition \ref{prop:generalMC}, it suffices to verify Conditions \ref{condition:mereexistence} and \ref{condition:speed} above.

  We note that claims on one-point estimates are easy corollaries of Theorem \ref{thm:oneballnew}.
  The existence of the two-point Green's function follows from Theorem~\ref{thm:two-point} and its upper bound from~\eqref{eq:cb1}. Since we know the rate of convergence for the two-point function (see RHS of \eqref{eq:twop-rate}), the second condition is satisfied. 
\end{proof}

\subsection{Relating Minkowski content with limiting occupation measure}

In this subsection, we finish the proof of Theorem \ref{mainthm}, which will be rephrased in Proposition \ref{prop:equivprop}.

We start by relating $c_0$ in the statement of Theorem \ref{mainthm} to that of Theorem \ref{thm:oneballnew}.
Recall the definition of the reference point $\hat{x}=(1/2,0,0)$ and the mesh size $m^{-1}=2^{-n}$ for some $n\in\Zp$. For $x=(a,b,c)\in m^{-1}\Z^3$. write $$x+\square \coloneqq \left(a-\frac{m}{2},a+\frac{m}{2}\right]\times \left(b-\frac{m}{2},b+\frac{m}{2}\right]\times\left(c-\frac{m}{2},c+\frac{m}{2}\right]$$ 
for the box of size $m^{-1}$ centered at $x$.
For $s>0$, write 
$$
c_0(n,s)\coloneqq \frac{E[J^{2^n}_s(\hat{x}+\square)]}{2^{n\beta}P(\hat{x}\in \eta_{2^n})}.
$$
We now claim that $c_0(n,s)$ has a limit. Recall the definition of $g(x)$ and $c_0$ from Theorems \ref{thm:oneptnew} and \ref{thm:oneballnew} respectively.
\begin{prop}
For any $s>0$,
\begin{equation}\label{eq:cszerodef}
c_0(s)\coloneqq\lim_{n\to\infty}c_0(n,s)=2^{(3-\beta)s}\frac{P(\KK\cap B(\hat{x},2^{-s}) \neq \emptyset)}{g(\hat{x})}.
\end{equation}
Moreover,
\begin{equation}\label{eq:czerodef}
c_0=\lim_{s\to\infty}c_0(s).
\end{equation}
\end{prop}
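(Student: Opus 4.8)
The plan is to prove the two displayed limits separately, both by reducing to results already established. For \eqref{eq:cszerodef}, fix $s>0$. The numerator $E[J^{2^n}_s(x+\square)]$ is an integral over the unit cube $x+\square$ of the renormalized discrete indicator $J^{2^n}_s(z)=2^{(3-\beta)s}\mathbf 1_{z\in B(\eta_{2^n},2^{-s})}$; since $x+\square$ has volume $2^{-3n}$ and the integrand is $2^{(3-\beta)s}$ times an indicator, $E[J^{2^n}_s(x+\square)]=2^{(3-\beta)s}2^{-3n}P(x\in B(\eta_{2^n},2^{-s}))=2^{(3-\beta)s}2^{-3n}P(\eta_{2^n}\cap B(x,2^{-s})\neq\emptyset)$, where $x$ here is the lattice point $\hat x_{2^n}$. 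First I would invoke Corollary~\ref{cor:twoptapprox} (equation \eqref{eq:twoptapprox1}), together with Proposition~\ref{prop:continuity}, to pass from the discrete hitting probability to the continuum: $P(\eta_{2^n}\cap B(\hat x,2^{-s})\neq\emptyset)\to P(\KK\cap B(\hat x,2^{-s})\neq\emptyset)$ as $n\to\infty$ (here one must check $B(\hat x,2^{-s})$ stays inside $\DD$, which holds for $s$ large; for small $s$ the statement is interpreted with $\hat x$ fixed and $s$ restricted so that the ball is admissible, consistent with how $c_0(s)$ is used). Simultaneously, Proposition~\ref{prop:dyadic-onePoint} (or Theorem~\ref{thm:oneptnew}) gives $2^{n\beta}P(\hat x\in\eta_{2^n})=g(\hat x)2^{3n}[1+o(1)]$, i.e. $2^{n\beta}P(\hat x_{2^n}\in\eta_{2^n})/2^{3n}\to g(\hat x)$. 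Dividing, the factors $2^{-3n}$ and $2^{3n}$ cancel and one obtains
\[
c_0(n,s)=\frac{2^{(3-\beta)s}2^{-3n}P(\eta_{2^n}\cap B(\hat x,2^{-s})\neq\emptyset)}{2^{n\beta}P(\hat x\in\eta_{2^n})}\xrightarrow[n\to\infty]{}2^{(3-\beta)s}\frac{P(\KK\cap B(\hat x,2^{-s})\neq\emptyset)}{g(\hat x)},
\]
which is \eqref{eq:cszerodef}.

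For \eqref{eq:czerodef}, I would use Theorem~\ref{thm:oneballnew} directly at the reference point $\hat x$: it gives $P(\KK\cap B(\hat x,r)\neq\emptyset)=c_2 g(\hat x)r^{3-\beta}[1+O(r^\delta)]$ for small $r$ (here $d_{\hat x}=1/2$ is a fixed constant, so the $d_x^{-c}$ factor is harmless). Substituting $r=2^{-s}$ into \eqref{eq:cszerodef} yields $c_0(s)=2^{(3-\beta)s}\cdot c_2 g(\hat x)2^{-(3-\beta)s}[1+O(2^{-\delta s})]/g(\hat x)=c_2[1+O(2^{-\delta s})]$, so $\lim_{s\to\infty}c_0(s)=c_2$ exists. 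One then identifies this limit with the constant $c_0$ of Theorem~\ref{mainthm}: indeed this is precisely the consistency check that makes the measure-theoretic construction work — $c_0(s)$ is the ratio $E[J^{2^n}_s(x+\square)]/(2^{n\beta}P(\hat x\in\eta_{2^n}))$, and letting $n\to\infty$ then $s\to\infty$ recovers the Minkowski-content normalization relative to the occupation measure, so $c_0=\lim_{s\to\infty}c_0(s)$ by definition of $c_0$ in the statement of Theorem~\ref{mainthm} (this identification will be made rigorous in the next subsection via Proposition~\ref{prop:equivprop}; here we only need existence of the limit).

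The only genuine subtlety — and the step I expect to need the most care — is the interchange of the two limits ($n\to\infty$ and $s\to\infty$): a priori $c_0(n,s)$ converges in $n$ for each fixed $s$, and $c_0(s)$ converges in $s$, but one must be sure these compose correctly when, later, one wants $\lim_{s\to\infty}\lim_{n\to\infty}c_0(n,s)=c_0$. This is handled by the uniform convergence rate already built into our estimates: Proposition~\ref{prop:dyadic-onePoint} and Theorem~\ref{thm:oneballnew} both come with explicit polynomial error terms ($O(2^{-\delta n})$ and $O(2^{-\delta s})$ respectively), and Corollary~\ref{cor:twoptapprox} upgrades to the quantitative bound \eqref{eq:twoptapprox3}; combining these, $|c_0(n,s)-c_2|\le O(2^{-\delta s})+O_s(2^{-\delta' n})$, from which both iterated limits agree. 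Everything else is bookkeeping with the already-cited estimates, so the proof is short.
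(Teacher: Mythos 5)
Your overall route is the same as the paper's: the paper disposes of \eqref{eq:cszerodef} by combining the definition of $J$, the discrete-to-continuum continuity statement (Proposition~\ref{prop:continuity}, via Corollary~\ref{cor:twoptapprox}) and the sharp one-point estimate (Theorem~\ref{thm:oneptnew}), and then gets \eqref{eq:czerodef} directly from Theorem~\ref{thm:oneballnew} at the reference point, exactly as you do; also, no interchange of limits is needed for this proposition (the statement is the iterated limit $n\to\infty$ then $s\to\infty$), so your final paragraph is over-cautious but harmless.

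There is, however, one concrete error in the only place where the powers of $2^{n}$ actually matter. Theorem~\ref{thm:oneptnew} gives $P(\hat{x}\in\eta_{2^n})=g(\hat{x})\,2^{-(3-\beta)n}\bigl[1+o(1)\bigr]$, hence
\begin{equation*}
2^{n\beta}P(\hat{x}\in\eta_{2^n})=g(\hat{x})\,2^{(2\beta-3)n}\bigl[1+o(1)\bigr],
\end{equation*}
not $g(\hat{x})\,2^{3n}\bigl[1+o(1)\bigr]$ as you assert; with your numerator $\,2^{(3-\beta)s}2^{-3n}P\bigl(\eta_{2^n}\cap B(\hat{x},2^{-s})\neq\emptyset\bigr)$ and this denominator, the displayed ratio tends to $0$, not to the stated limit. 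What the cancellation actually requires is a denominator of order $g(\hat{x})\,2^{-3n}$, i.e.\ $2^{-n\beta}P(\hat{x}\in\eta_{2^n})$, which is precisely the expected $\overline{\mu}_{2^n}$-mass of the single cell $\hat{x}+\square$; read this way (which is how the normalization in the definition of $c_0(n,s)$ must be understood for \eqref{eq:cszerodef} to be dimensionally consistent), your argument goes through verbatim. You should also state the numerator step as an approximation rather than an equality: the hitting probability varies over the cell, so one sandwiches $B(\hat{x},2^{-s}-2^{-n})\subset B(z,2^{-s})\subset B(\hat{x},2^{-s}+2^{-n})$ for $z\in\hat{x}+\square$ and then uses Proposition~\ref{prop:continuity} together with the convergence in the L\'evy metric, which is implicitly what your appeal to Corollary~\ref{cor:twoptapprox} accomplishes. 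The treatment of \eqref{eq:czerodef} (Theorem~\ref{thm:oneballnew} at $\hat{x}$, where $d_{\hat{x}}=1/2$ is fixed, giving $c_0(s)=c_2\bigl[1+O(2^{-\delta s})\bigr]$ and hence existence of the limit, the identification with the constant of Theorem~\ref{mainthm} being carried out in Proposition~\ref{prop:equivprop}) is fine and matches the paper.
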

\begin{proof}
The claim \eqref{eq:cszerodef} is an easy corollary of the definition of $J$, Proposition \ref{prop:continuity} and Theorem \ref{thm:oneptnew}, while \eqref{eq:czerodef} follows from Theorem \ref{thm:oneballnew}.
\end{proof}
\begin{prop} \label{prop:equivprop} For each $V\in \cD^o$, 
\begin{equation}\label{eq:equivprop}
J_V=c_0\mu(V)\quad\mbox{ a.s.}
\end{equation}
\end{prop}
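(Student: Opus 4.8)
The plan is to show that both $J_V$ and $c_0\mu(V)$ arise as the $L^2$- (and a.s.-) limit of the \emph{same} sequence of discrete functionals, namely $c_0(n,s)^{-1}$-renormalizations of the discrete content $J_s^{2^n}(V)$, and then identify the limits. Concretely, recall from Proposition~\ref{prop:3.0} and the argument in the proof of Theorem~\ref{thm:two-point} that $J_s(V)$ converges in $L^2$ and almost surely as $s\to\infty$ to $J_V=\cont_\beta(\KK\cap V)$, and that by Corollary~\ref{cor:twoptapprox} (more precisely~\eqref{eq:twoptapprox1} together with its $L^2$ upgrade via~\eqref{eq:twoptapprox2}) we have, for each fixed dyadic $s$, the convergence $E[J_s^{2^n}(V)]\to E[J_s(V)]$ and $E[(J_s^{2^n}(V))^2]\to E[(J_s(V))^2]$ as $n\to\infty$. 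On the other hand, by definition of $\mu_m$ in~\eqref{eq:mudef} and of $c_0(n,s)$, one has the exact discrete identity
\begin{equation*}
J_s^{2^n}(V)=\sum_{x\in 2^{-n}\Z^3\cap\eta_{2^n}}2^{(3-\beta)s}\,\mathrm{Vol}\big(B(\eta_{2^n},2^{-s})\cap V\cap(x+\square)\big),
\end{equation*}
which for $n$ large compared to $s$ is comparable, up to a factor $1+O(2^{-(n-s)})$ coming from boundary boxes of $V$, to $c_0(n,s)\cdot 2^{n\beta}P(\hat x\in\eta_{2^n})\cdot \mu_{2^n}(V)/f_{2^n}\cdot f_{2^n}$; using $f_{2^n}=2^{n\beta}$ (valid along dyadic scales by Proposition~\ref{prop:dyadic-onePoint}) this reads $J_s^{2^n}(V)=c_0(n,s)\,\mu_{2^n}(V)\,[1+o(1)]$, where the $o(1)$ is deterministic in $n$ for fixed $s$.

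The key steps, in order, are: (1) establish the displayed discrete identity relating $J_s^{2^n}(V)$ to $\mu_{2^n}(V)$ up to the explicit constant $c_0(n,s)$ and a boundary error; this is bookkeeping with the definition of $B(\cdot,2^{-s})$ and the partition of $V$ into unit boxes $x+\square$. (2) Pass $n\to\infty$ with $s$ fixed: the left side converges in $L^2$ (hence in distribution jointly with $\KK_{2^n}\to\KK$) to $J_s(V)$ by Corollary~\ref{cor:twoptapprox}, the constant $c_0(n,s)\to c_0(s)$ by~\eqref{eq:cszerodef}, the boundary error vanishes, and $\mu_{2^n}(V)\to\mu(V)$ by Theorem~\ref{thm:weakconv} (using that $V\in\cD^o$ has $\mu(\partial V)=0$ a.s., which follows from~\eqref{eq:boundaryV} / Corollary~\ref{cor:plane}). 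This yields $J_s(V)=c_0(s)\,\mu(V)$ a.s. for each fixed dyadic $s$. (3) Pass $s\to\infty$: the left side converges a.s. to $J_V$ by Proposition~\ref{prop:3.0} and the Borel–Cantelli argument already used for Theorem~\ref{thm:two-point}, while $c_0(s)\to c_0$ by~\eqref{eq:czerodef}; since $\mu(V)$ does not depend on $s$, we conclude $J_V=c_0\,\mu(V)$ a.s. Finally, since $\cD^o$ is countable, the exceptional null sets can be taken uniform over all $V\in\cD^o$.

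The main obstacle I expect is Step (2): justifying that one may simultaneously take the $n\to\infty$ limit of the \emph{random} quantities $J_s^{2^n}(V)$ and $\mu_{2^n}(V)$ and conclude the a.s.\ identity $J_s(V)=c_0(s)\mu(V)$, rather than merely an identity in distribution or in $L^1$. The clean way around this is to use the joint weak convergence $(\KK_{2^n},\mu_{2^n})\xrightarrow{\rm w}(\KK,\mu)$ from Theorem~\ref{thm:weakconv} together with the fact that $J_s(V)$ is, for fixed $s$, a \emph{continuous} functional of the set $\KK$ in the Hausdorff metric on the event $\{\cont_{\beta-\varepsilon}(\partial V\cap\KK)$ behaves well$\}$ — continuity of $K\mapsto\mathrm{Vol}(B(K,2^{-s})\cap V)$ holds off a null set because $\KK$ a.s.\ puts no content on $\partial V$ and, by Proposition~\ref{prop:continuity}, does not touch $\partial B$-type sets pathologically. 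Then the map $(\KK,\mu)\mapsto \big(J_s(V),\mu(V)\big)$ is a.s.\ continuous, so the discrete relation $J_s^{2^n}(V)=c_0(n,s)\mu_{2^n}(V)[1+o(1)]$ passes to the limit as an a.s.\ identity $J_s(V)=c_0(s)\mu(V)$ by the continuous mapping theorem applied on a suitable coupling (Skorokhod representation). Once this is in place the remaining steps are routine limiting arguments already carried out elsewhere in the paper.
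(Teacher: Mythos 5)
The crux of your argument, the ``exact discrete identity'' in Step (1), is false, and this is not a repairable bookkeeping issue. First, the displayed formula itself is wrong: the boxes $x+\square$ with $x\in 2^{-n}\Z^3\cap\eta_{2^n}$ cover only an $O(2^{-n})$-neighborhood of the curve, whereas $J_s^{2^n}(V)=2^{(3-\beta)s}\mathrm{Vol}\big(B(\eta_{2^n},2^{-s})\cap V\big)$ is the volume of the much larger $2^{-s}$-sausage; summing $\mathrm{Vol}\big(B(\eta_{2^n},2^{-s})\cap V\cap(x+\square)\big)$ over curve points gives roughly $2^{(3-\beta)s}2^{-3n}\#\{x\in\eta_{2^n}\cap V\}\asymp 2^{(3-\beta)s}2^{(\beta-3)n}$, which tends to $0$ as $n\to\infty$ for fixed $s$, while $J_s^{2^n}(V)$ stays of order one. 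Second, and more fundamentally, no pathwise relation of the form $J_s^{2^n}(V)=c_0(n,s)\,\mu_{2^n}(V)\,[1+o(1)]$ with a \emph{deterministic} error can hold: the sausage volume at scale $2^{-s}$ depends on the mesoscopic geometry of the curve (how much it folds back on itself within distance $2^{-s}$), while $\mu_{2^n}(V)$ counts microscopic lattice steps, and the ratio of the two is genuinely random. The relation between them is only a concentration statement, not an identity, and establishing it is precisely the hard part of the proof.

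This is where the paper's argument differs essentially from yours: it proves (via a second-moment computation) that $\EE\big[(J_s^{2^n}(V_i)-c_0(n,s)\mu_{2^n}(V_i))^2\big]$ is small for sub-boxes $V_i$ of a fine dyadic scale, using the asymptotic-independence/decoupling result (Proposition~\ref{prop:decoup}, as in the proof of Proposition~\ref{prop:maininequ1.1}), and then sums over the sub-boxes and applies Chebyshev. Your Steps (2) and (3) --- Skorokhod coupling, $\mu_{2^n}(V)\to\mu(V)$ using that $V$ is a continuity set, $J_s(V)\to J_V$ a.s., comparison of discrete and continuum sausages via Hausdorff closeness, and finally $c_0(s)\to c_0$ --- do mirror the outer structure of the paper's proof, but without the concentration estimate replacing your Step (1) the whole argument collapses: you would at best obtain closeness of expectations, not the a.s.\ identity $J_s(V)=c_0(s)\mu(V)$. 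To repair the proposal you would need to reinstate the decoupling-based $L^2$ estimate as the central step.
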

To relate the content to the limiting occupation measure, as in last subsection, again we pass the analysis to the discrete and make use of the asymptotic independence for 3D LERW.

\begin{proof}
To prove \eqref{eq:equivprop}, it suffices to show that for each $\varepsilon>0$, $P(|J_V-c_0\mu(V)|>\varepsilon)=0$.
Suppose on the contrary there exists $C=C(\varepsilon)>0$ such that
\begin{equation}\label{eq:contra}
P\left(|J_V-c_0\mu(V)|>\varepsilon\right)\geq C.
\end{equation}
By Skorokhod's representation theorem, thanks to Theorem \ref{thm:weakconv}, we can consider $\PP$, a coupling of $(\eta_{2^n},\mu_{2^n})_{n\in\Zp}$ and $(\KK,\mu)$ such that under the coupling,
\eqref{eq:weakconv} happens $\PP$-a.s. We write $\EE$ for the expectation w.r.t.\ $\PP$.

By~\cite[Proposition 9.4]{Natural}, $V$ is a.s.\ a continuity set for $\mu$, hence we have
\begin{equation}
\mu_{2^n}(V) \to \mu(V)\quad\PP\mbox{-a.s.}
\end{equation}
As a result, there exists some $n_0(\varepsilon,C)$ such that for all $n\geq n_0$, 
\begin{equation}\label{eq:mumu}
\PP\left(\big|\mu_{2^n}(V)-\mu(V)\big|>\frac{\varepsilon}{4c_0}\right)<C/8.
\end{equation}

As $J_V$ is the a.s.\ limit of $J_s(V)$, we can pick some $s_0=s_0(\varepsilon,C)\in (0, 2^{-k^4}/100)$, such that for all $s>s_0$,
\begin{equation}\label{eq:JVJsV}
\PP\left(|J_V-J_s(V)|>\varepsilon/8\right)\leq C/8.
\end{equation}

We now claim the following proposition but postpone its proof till the end of this subsection.

\begin{prop}\label{prop:maininequ1.1}
For each $V\in \cD^o$, some $s_1(\varepsilon,C,V)>0$ such that for all $s>s_1$, there exists $n_1(s,\varepsilon,C,V)\in\Zp$ such that for 
all $n>n_1$,
\begin{equation}\label{eq:mainineq}
\PP\Big(\big|J_s^{2^n}(V) -c_0\mu_{2^n}(V) \big| \geq \varepsilon/8\Big) \leq C/16.
\end{equation}
\end{prop}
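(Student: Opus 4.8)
\medskip
\noindent\emph{Plan of proof of Proposition~\ref{prop:maininequ1.1}.}
Fix $V\in\cD^o$, say $V\in\cD^o_{k_0}$, and keep $\hat x=(1/2,0,0)$ as the reference point. Since the Skorokhod coupling $\PP$ leaves the law of $\eta_{2^n}$ unchanged, $\EE[(J_s^{2^n}(V)-c_0\mu_{2^n}(V))^2]=E[(J_s^{2^n}(V)-c_0\mu_{2^n}(V))^2]$, so by Chebyshev's inequality it is enough to find $s_1=s_1(\varepsilon,C,V)$ with
\[
\limsup_{n\to\infty}E\Big[\big(J_s^{2^n}(V)-c_0\mu_{2^n}(V)\big)^2\Big]\le \frac{C\varepsilon^2}{2^{12}}\qquad\text{for all }s>s_1 .
\]
First I would replace $c_0$ by $c_0(s)$: summing the up-to-constant one- and two-point bounds of Propositions~\ref{prop:uptocb1} and~\ref{prop:uptocb2} over $V\cap 2^{-n}\Z^3$ gives $\sup_n E[\mu_{2^n}(V)^2]=:M_V<\infty$, and since $|c_0-c_0(s)|\to 0$ by~\eqref{eq:czerodef}, the difference contributes at most $2|c_0-c_0(s)|^2M_V$, negligible for $s$ large. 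It therefore remains to show $\limsup_n E[(J_s^{2^n}(V)-c_0(s)\mu_{2^n}(V))^2]=O_V(2^{-c\sqrt s})$ for a universal $c>0$.

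Writing $m=2^n$ and expanding the square gives $E[J_s^m(V)^2]-2c_0(s)\,E[J_s^m(V)\mu_m(V)]+c_0(s)^2\,E[\mu_m(V)^2]$, and I would represent the three terms as pair integrals/sums,
\[
E[J_s^m(V)^2]=\iint_{V^2}2^{2(3-\beta)s}\,P\big(\eta_m\cap B(z,2^{-s})\neq\emptyset,\ \eta_m\cap B(w,2^{-s})\neq\emptyset\big)\,dz\,dw,
\]
\[
E[J_s^m(V)\mu_m(V)]=(f_m)^{-1}2^{(3-\beta)s}\sum_{y}\int_V P\big(\eta_m\cap B(z,2^{-s})\neq\emptyset,\ y\in\eta_m\big)\,dz,\qquad E[\mu_m(V)^2]=(f_m)^{-2}\sum_{x,y}P(x,y\in\eta_m),
\]
the sums over $V\cap m^{-1}\Z^3$. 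Each domain is then split, as in the proof of Proposition~\ref{prop:3.0}, into an off-diagonal part $\{|z-w|\ge 2^{-\sqrt s/100}\}$, a near-diagonal part $\{100\cdot2^{-s}\le|z-w|<2^{-\sqrt s/100}\}$ and an on-diagonal part $\{|z-w|<100\cdot2^{-s}\}$.

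The off-diagonal part is the heart of the argument. There I would first replace $z,w$ by their lattice approximations (a multiplicative $1+o_m(1)$, since $m^{-1}\ll 2^{-s}$) and then apply the asymptotic-independence estimates~\eqref{eq:ppbb} and~\eqref{eq:bpbb} of Proposition~\ref{prop:decoup} with $S_1=S_2=B(2^{-s})\cap m^{-1}\Z^3$ and a suitable parameter $k$ (e.g.\ $k\asymp s^{1/4}$, so that $2^{-k^4}\ge 2^{-s}$ and $2^{-k^2}\le 2^{-\sqrt s/100}$); since the decoupling is anchored at the fixed reference point $\hat x$, it applies to all pairs that are macroscopically separated from each other and from $\{0\}\cup\partial\DD$ once the sets $S_i$ are microscopic, after covering $V$ by dyadic boxes of scale $\asymp s^{1/4}$ and handling cross-box pairs through $\hat x$ as in the proof of (5.6)--(5.11) of \cite{Natural}. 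Each probability then factors, up to $1+O(2^{-\sqrt s/100})$, as $A_m(s)^{\#}\,P(z_m,\cdot\in\eta_m)$, with $A_m(s)=a_m(B(2^{-s})\cap m^{-1}\Z^3)$ (see~\eqref{eq:anSdef}) and $\#=2,1,0$ respectively, so the three off-diagonal contributions combine into the perfect square
\[
\Big(2^{(3-\beta)s}A_m(s)-c_0(s)\,(f_m)^{-1}m^{3}\Big)^2\iint_{V^2,\ |z-w|\ge 2^{-\sqrt s/100}}P(z_m,w_m\in\eta_m)\,dz\,dw
\]
plus a residual $O_V(2^{-\sqrt s/100})$. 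The crucial point is that, by its definition~\eqref{eq:cszerodef} together with Theorem~\ref{thm:oneptnew}, Corollary~\ref{cor:asymp} and Proposition~\ref{prop:continuity}, the constant $c_0(s)$ is precisely tuned so that the bracket $2^{(3-\beta)s}A_m(s)-c_0(s)(f_m)^{-1}m^{3}$ is $o_m(1)$ (both $2^{(3-\beta)s}A_m(s)$ and $c_0(s)(f_m)^{-1}m^3$ being, up to the one-point normalization by $P(\hat x_m\in\eta_m)$, $2^{(3-\beta)s}$ times the ball-hitting probability at $\hat x$), while the remaining double integral is bounded by a $V$-dependent constant; hence the perfect square is $o_m(1)$.

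Finally, the near- and on-diagonal contributions to all three terms are controlled exactly as in the proof of Proposition~\ref{prop:3.0}, using only the up-to-constant two-point estimates~\eqref{eq:2b} and~\eqref{eq:1b1p} and the one-point estimate~\eqref{eq:oneballd}: the near-diagonal part of each term is $O_V(2^{-(\beta-1)\sqrt s/100})$ (as in~\eqref{eq:II}) and the on-diagonal part is $O_V(2^{-\beta s})$ (as in~\eqref{eq:III}), and since $c_0(s)$ is uniformly bounded these bounds persist after forming the linear combination. Putting the three regions together yields $\limsup_n E[(J_s^m(V)-c_0(s)\mu_m(V))^2]=O_V(2^{-c\sqrt s})$, which is below $C\varepsilon^2/2^{12}$ for $s>s_1(\varepsilon,C,V)$; then, choosing $n_1(s,\varepsilon,C,V)$ so that for $n>n_1$ the expectation does not exceed twice this $\limsup$, Chebyshev's inequality gives~\eqref{eq:mainineq}. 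The main obstacle is the off-diagonal step: one must run the decoupling uniformly over all well-separated pairs in $V$ with the microscopic sets taken at the small scale $2^{-s}$ (a mild strengthening of Proposition~\ref{prop:decoup}, requiring the covering of $V$ just described), and then verify that the ``global'' factor $\iint P(z_m,w_m\in\eta_m)$ is genuinely common to the three terms, so that the bookkeeping built into the definition of $c_0(s)$ produces the perfect-square cancellation.
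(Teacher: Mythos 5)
Your proposal shares the paper's skeleton (Chebyshev on a second moment, Proposition~\ref{prop:decoup} as the engine, up-to-constant bounds off the main region, and a swap between $c_0$ and a finite-scale approximation), but the key off-diagonal step is wrong as you state it. You claim the bracket $2^{(3-\beta)s}A_m(s)-c_0(s)(f_m)^{-1}m^{3}$ is $o_m(1)$ while the factor $\iint P(z_m,w_m\in\eta_m)\,dz\,dw$ is merely $O_V(1)$. In fact both $2^{(3-\beta)s}A_m(s)$ and $c_0(s)(f_m)^{-1}m^{3}=c_0(s)/P(\hat{x}_m\in\eta_m)$ are of order $m^{3-\beta}\to\infty$, and their difference equals $2^{(3-\beta)s}\bigl[P(\eta_m\cap B(\hat{x},2^{-s})\neq\emptyset)-P(\KK\cap B(\hat{x},2^{-s})\neq\emptyset)\bigr]/P(\hat{x}_m\in\eta_m)$ (up to the normalization built into \eqref{eq:cszerodef}); for this to be $o_m(1)$ you would need the discrete ball-hitting probability to converge to its continuum counterpart at a rate $o(m^{\beta-3})$, which is far beyond anything available (Kozma's coupling only yields an unspecified small polynomial rate). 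So from your stated bounds the perfect-square term is not controlled. The step is repairable, and you should do the accounting explicitly: by Proposition~\ref{prop:uptocb2} the double integral is $\asymp_V m^{2(\beta-3)}$, not just $O_V(1)$, which exactly cancels the $m^{2(3-\beta)}$ size of the squared bracket and leaves $C_V\,2^{2(3-\beta)s}$ times the squared difference of the two ball-hitting probabilities; that quantity does tend to $0$ as $m\to\infty$ at fixed $s$ by weak convergence together with Proposition~\ref{prop:continuity}, which is all you need for the $\limsup_n$.

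The other obstacle you flag, namely applying Proposition~\ref{prop:decoup} to pairs $z,w$ that are far apart in $V$ and hence not contained in a common dyadic box of scale $k\asymp s^{1/4}$, is genuine if you insist on your global off/near/on-diagonal decomposition, and the paper avoids it entirely; you may prefer its device to proving a strengthened decoupling. The paper partitions $V$ into sub-boxes $V_1,\dots,V_{g(k_1)}$ of a scale $k_1=k_1(\varepsilon,C,V)$ chosen once and for all (not tied to $s$), proves the per-box bound $\EE\bigl[(J^{2^n}_s(V_i)-c_0(n,s)\mu_{2^n}(V_i))^2\bigr]\le C_2 2^{-k_1^2}\EE[\mu_{2^n}^2(V_i)]$ using the decoupling only for pairs inside a single $V_i$, and then sums over boxes with Cauchy--Schwarz, paying only the number $2^{3(k_1-k)}$ of sub-boxes, which is absorbed by $2^{-k_1^2}$; Markov's inequality and a final replacement of $c_0(n,s)$ by $c_0$ on the high-probability event $\{\mu_{2^n}(V)\le M_1\}$ from Corollary~\ref{cor:tight} then give \eqref{eq:mainineq}. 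Your early replacement of $c_0$ by $c_0(s)$ via the uniform bound on $\EE[\mu_{2^n}(V)^2]$ is a fine alternative to that last step, but for the core $L^2$ estimate the partition-plus-Cauchy--Schwarz route needs no cross-box decoupling and no covering argument, i.e.\ it removes exactly the ``main obstacle'' you single out.
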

By tightness of the occupation measure (see Corollary \ref{cor:tight}), one can find some $M(C)<\infty$ such that
\begin{equation}\label{eq:mutight}
\PP(|\mu_{2^n}(V)|\geq M)\leq C/16.
\end{equation}
By \eqref{eq:mainineq}, the inequality \eqref{eq:mutight} implies that there exists some $M_1(\varepsilon,C)<\infty$, such that
\begin{equation}\label{eq:Jtight}
\PP(|J_s^{2^n}(V)|\geq  M_1)\leq C/8.
\end{equation}
We now pick $s>2\max(s_0,s_1)$ and keep it fixed. Write $\varepsilon_1=\min(\varepsilon/8M_1,1/10)$. We pick $\delta=\delta(s,\varepsilon_1)>0$ such that if we define $s',s''$ via the following equations:
$$e^{-s}+\delta=e^{-s'};\quad e^{-s}-\delta=e^{-s''},$$ 
then
$$
s''>s'>\max(s_0,s_1)\quad\mbox{ and }\quad \max \big\{ e^{(3-\beta)(s-s')} , e^{(3-\beta)(s''-s)} \big\} <1+\varepsilon_1.$$
As $\eta_{2^n}\to\KK$ a.s.\ in Hausdorff metric, one can pick some $n_2(\varepsilon,C)$ such that for all $n>n_2$,
\begin{equation}
\PP(d_{\rm Haus}(\eta_{2^n},\KK)\geq \delta)\leq C/8.
\end{equation}
This implies that 
\begin{equation}\label{eq:Hausineq}
\PP\Big(  (1-\varepsilon_1) J_{s'}(V)\leq J^{2^n}_s(V)
 \leq (1+\varepsilon_1) J_{s''}(V) \Big) \geq 1- C/8.
\end{equation}
Hence, \eqref{eq:Hausineq} and \eqref{eq:JVJsV} imply that if we pick $n>\max(n_0,n_1,n_2)$,
\begin{equation}\label{eq:Jsquick}
\PP( |J_V-  J_s^{2^n}(V)| \geq \varepsilon/4) \leq C/4.
\end{equation}

Combining \eqref{eq:mumu}, \eqref{eq:Jsquick} and  \eqref{eq:mainineq}, the  probability of the event that 
$$ | J_{V} - c_{0} \mu (V) | \le \frac{5 \varepsilon}{8} $$
is bigger than $1 - \frac{7C}{16} $, which contradicts \eqref{eq:contra}! We hence finish the proof of \eqref{eq:equivprop}.
\end{proof}

We now turn to the analysis in the discrete. 
\begin{proof}[Proof of Proposition \ref{prop:maininequ1.1}]

Write $M_2\coloneqq M_2(V)=\sup_{n}\EE[\mu^2_{2^n}(V)]$. Up-to-constant estimates for two-point functions of 3D LERW (see Proposition \ref{prop:uptocb2}) implies that $M_2<\infty$.
We fix some $k_1=k_1(k,\varepsilon,C,V)\in\Zp$ such that
\begin{equation}
2^{-k_1^2+3(k_1-k)}<\frac{(\varepsilon/16)^2}{C_2M_2C/8},
\end{equation}
where $C_2$ will be defined later (above \eqref{eq:C2def}).
Recall that 
$$
c_0(n,s)= J^{2^n}_s(\hat{x}+\square)/P(\hat{x}\in\eta_{2^n})\quad
\mbox{ and }\quad c_0=\lim_{s\to 0}\lim_{n\to\infty} c_0(n,s).$$
We now pick $s_1=s_1(\varepsilon_1,M_1)=s_1(\varepsilon,C,V)>0$ such that
$$|c_0(s)-c_0|<\frac{\varepsilon_1}{32M_1},$$ then pick $n_3=n_3(s,\varepsilon_1,M_1)=n_3(s,\varepsilon,C,V)>0$ such that for all $n>n_3$,
$$|c_0(s,n)-c_0(s)|<\frac{\varepsilon_1}{32M_1}.$$

Let $V_1,\ldots,V_{g(k_1)}\in\cD_{k_1}$ where $g(k_1)=2^{3(k_1-k)}$ be the collection of dyadic boxes of scale $k_1$ that partition $V$.

For $s>s_1$ and $n>n_3$, we 
apply Proposition~\ref{prop:decoup} with $V_i$ for each $i=1,\ldots,g(k_1)$ similarly as how we apply~\cite[Proposition 6.2]{Natural} to prove~\cite[Proposition 6.1]{Natural}, and obtain that
there exists some universal $C_2>0$ such that for all $n>k_1^4)$,
\begin{equation}\label{eq:C2def}
\EE\Big[\big(J^{2^n}_s(V_i)-c_0(n,s)\mu_{2^n}(V_i)\big)^2\Big]\leq C_22^{-k_1^2}\EE[\mu^2_{2^n}(V_i)].
\end{equation}

Note that the $C_2$ does not rely on $k$.
By adding them together and applying Cauchy-Schwarz inequality, we obtain
\begin{equation}\label{eq:blah}
\EE\Big[\big(J^{2^n}_s(V)-c_0(n,s)\mu_{2^n}(V)\big)^2\Big]\leq C_22^{-k_1^2+3(k_1-k)}\EE[\mu^2_{2^n}(V)].
\end{equation}
We now take $n_1=n_1(s,\varepsilon,C,V)\coloneqq\max(n_3,k_1^4)$. By our choice of $k_1$, the RHS of \eqref{eq:blah} is bounded by $\frac{\varepsilon^2/16}{C/8}$. Hence, by Markov inequality, this implies
\begin{equation}\label{eq:onemorestep}
\PP\Big(\big|J_s^{2^n}(V) -c_0(n,s)\mu_{2^n}(V) \big| \geq \varepsilon/16\Big) \leq C/8.
\end{equation}
As we have picked $n,s$ such that $|c_0(n,s)-c_0|\leq \varepsilon/(16M_1)$, by \eqref{eq:mutight},
\begin{align*}
&\PP\Big(\big|J_s^{2^n}(V) -c_0\mu_{2^n}(V) \big| \geq \varepsilon/8\Big) \\ &\leq \PP\Big(\big|J_s^{2^n}(V) -c_0(n,s)\mu_{2^n}(V) \big| \geq \varepsilon/16\Big)+\PP[\mu_{2^n}(V)>M_1]\leq C/4.
\end{align*}
This finishes the proof of \eqref{eq:mainineq}.
\end{proof}
We have thus finished the proof for Theorem \ref{mainthm}. 

\subsection{Invariance}

As Minkowski content is a scaling-invariant and rotation-invariant quantity, Theorem \ref{mainthm} implies that $\eta$, the scaling limit of 3D LERW in natural parametrization, is also scaling-invariant and rotation-invariant in the following sense.
\begin{cor}\label{cor:maincor}
For any $a>0$, let $\KK^a$ be the scaling limit\footnote{Its existence has also been  confirmed in \cite{Kozma}.} of 3D LERW of mesh size $m^{-1}$, $m\in\Zp$ in $a\mathbb{D}$. 
Write $\eta^a$ for the parametrization of $\KK^a$ by its Minkowski content. 
Then it follows that
\[ 
  \eta^a(\cdot) \overset{\rm d}{=} a \eta(a^{\beta} \cdot).
\]
For any $\theta\in[0,2\pi)$, let $T_\theta(\eta)$ stand for $\eta$ rotated (counter-clockwise) by angle $\theta$, then
\[
  T_\theta(\eta)\overset{\rm d}{=}\eta.
\]
\end{cor}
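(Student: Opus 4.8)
\textbf{Proof proposal for Corollary~\ref{cor:maincor}.}

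The plan is to transfer the scaling and rotational invariance of the Minkowski-content measure to the parametrized curve, using the identification $\nu = c_0 \mu$ from Corollary~\ref{cor:MeasuresEqual} together with the fact that $\eta$ is obtained from $\KK$ by parametrizing by $\mu$ (equivalently, up to the fixed constant $c_0$, by $\nu$). First I would record the scaling covariance of Minkowski content: if $A \subset \Rt$ is a bounded Borel set such that $\cont_\beta(A)$ exists, then for $a > 0$ one has $\cont_\beta(aA) = a^\beta \cont_\beta(A)$, directly from the definition $\cont_{\beta,r}(A) = r^{-(3-\beta)}{\rm Vol}(B(A,r))$ and the change of variables $x \mapsto ax$ (which sends $B(A,r)$ to $B(aA,ar)$ and scales volume by $a^3$). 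The analogous statement for rotations is even simpler: since ${\rm Vol}$ is rotation-invariant and $B(\cdot,r)$ commutes with rotations, $\cont_\beta(T_\theta A) = \cont_\beta(A)$ whenever the latter exists. Combined with Theorem~\ref{mainthm} this shows that the induced measure $\nu$ on $\KK$ transforms covariantly: $\nu^a := $ the Minkowski-content measure of $\KK^a$ satisfies $\nu^a(aV) \overset{\rm d}{=} a^\beta \nu(V)$ (using $\KK^a \overset{\rm d}{=} a\KK$ from~\eqref{eq:scaleInvariance} and Subsection~\ref{se:scalinglimit}), and likewise $T_\theta$ carries $\nu$ to the content measure of $T_\theta\KK$, which has the same law as $\nu$ because $\KK$ is rotation-invariant (Kozma, Subsection~6.1 of~\cite{Kozma}).

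Next I would feed this into the parametrization map. Recall from~\eqref{eq:paramMinkowski} that for $t \in [0,\nu(\KK)]$, $\eta^\nu(t) = x_t$ where $x_t$ is the unique point of $\KK$ with $\nu(\KK_{x_t}) = t$; since $\eta$ is the parametrization of $\KK$ by $\mu = c_0^{-1}\nu$, we have $\eta(t) = \eta^\nu(c_0 t)$, i.e. $\eta$ and $\eta^\nu$ differ only by the fixed time-rescaling by $c_0$, which is irrelevant for the stated distributional identities (the constant $c_0$ is the same for $\KK$ and for $\KK^a$, being universal). Now under the coupling $\KK^a = a\KK$, the sub-curve from $0$ to $ax$ in $\KK^a$ is exactly $a\KK_x$, so $\nu^a(\KK^a_{ax}) = a^\beta \nu(\KK_x)$; hence the point of $\KK^a$ at content-time $a^\beta t$ is $a$ times the point of $\KK$ at content-time $t$, which says precisely $\eta^{a,\nu}(a^\beta t) = a\,\eta^\nu(t)$, and translating back through the $c_0$-rescaling gives $\eta^a(\cdot) \overset{\rm d}{=} a\,\eta(a^\beta\cdot)$. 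The rotation statement is the same computation with $T_\theta$ in place of $x \mapsto ax$: $T_\theta$ is an isometry fixing $0$, it maps $\KK$ to a set with the same law and maps each initial sub-curve to the corresponding initial sub-curve of the image, and $\nu$ is rotation-invariant, so $\eta^\nu$ is carried to the parametrization of $T_\theta\KK$ by its content, which is equidistributed with $\eta^\nu$; hence $T_\theta(\eta) \overset{\rm d}{=} \eta$.

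The only genuinely delicate point is making the words ``the sub-curve of $\KK^a$ from $0$ to $ax$ equals $a$ times the sub-curve of $\KK$ from $0$ to $x$'' precise and checking it is measurable/well-defined: this uses that $\KK \in \Gamma$ a.s. (Theorem~1.2 of~\cite{SS}), i.e. $\KK$ is a simple curve from $0$ to $\partial\DD$, so $\KK_x$ is unambiguously defined for every $x \in \KK$, and that the map $x \mapsto \nu(\KK_x)$ is a.s. continuous and strictly increasing along $\KK$ — which is exactly what is needed in~\eqref{eq:paramMinkowski} and is guaranteed by $\nu$ being non-atomic with full support on $\KK$ (Proposition~\ref{prop:generalMC} and Corollary~\ref{cor:MeasuresEqual}, together with the one-point lower bound~\eqref{eq:asymp0}). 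Once these structural facts are in place, the proof is just bookkeeping with the covariance relations above; I would write it out in two short steps (scaling, then rotation), citing Theorem~\ref{mainthm}, Corollary~\ref{cor:MeasuresEqual}, \eqref{eq:scaleInvariance}, and the scale/rotation invariance of ${\rm Vol}$.
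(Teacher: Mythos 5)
Your proposal is correct and follows essentially the same route as the paper: invoke the scale/rotation invariance of $\KK$ from Kozma (so $\KK^a \overset{\rm d}{=} a\KK$, $T_\theta\KK \overset{\rm d}{=} \KK$), combine it with the scaling covariance $\cont_\beta(aA)=a^\beta\cont_\beta(A)$ and rotation invariance of Minkowski content, and push this through the parametrization map \eqref{eq:paramMinkowski}. The extra care you take about the constant $c_0$ and the well-definedness of $x\mapsto\nu(\KK_x)$ along the simple curve is fine but beyond what the paper's own (very brief) argument records.
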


\begin{proof}
  As discussed above in~\eqref{eq:scaleInvariance}, Subsection 6.1 in \cite{Kozma} shows that $\KK$ is both scale-invariant and rotation-invariant. 
  Then $ \KK^a \overset{\rm d}{=} a \KK $  and 
  $ T_\theta(\KK)\overset{\rm d}{=} \KK $.
  The the corresponding invariance of the Minkowski content and the definition of the parametrization by the Minkowski content in~\eqref{eq:paramMinkowski} imply the conclusion of the corollary.
\end{proof}

\section*{Acknowledgments}
The authors would like to thank the anonymous referees for very detailed and helpful comments.

SHT would like to acknowledge the support of UNAM-PAPIIT grant IA103724. XL is supported by National Key R\&D Program of China (No.~2021YFA1002700 and No.~2020YFA0712900) and NSFC (No.~12071012). DS was supported by JSPS Grant-in-Aid for Scientific Research (C) 22K03336, JSPS Grant-in-Aid for Scientific Research (B) 22H01128 and 21H00989.

\end{document}